\newcommand{\appsection}[1]{\let\oldthesection\thesection
\renewcommand{\thesection}{Appendix \oldthesection}
\section{#1}\let\thesection\oldthesection}
\def\qq{{\mathbb{Q}}}
\def\pp{{\mathbb{P}}}
\newcommand{\defi}[1]{\textsf{#1}} 
\newcommand\cal{\mathcal}
\newcommand\bb{\mathbb}
\DeclareMathOperator{\Cl}{Cl}
\DeclareMathOperator{\Pic}{Pic}
\DeclareMathOperator{\Aut}{Aut}
\DeclareMathOperator{\coeff}{coeff}
  \newtheorem{theorem}{Theorem}[section]
  \newtheorem{lemma}[theorem]{Lemma}
  \newtheorem{proposition}[theorem]{Proposition}
  \newtheorem{corollary}[theorem]{Corollary}
  \newtheorem{definition}[theorem]{Definition}
  \newtheorem{example}[theorem]{Example}
  \newtheorem{question}[theorem]{Question}
\newtheorem{remark}[theorem]{Remark}
\theoremstyle{remark}
\numberwithin{equation}{section}
\thanks{During the preparation of this article, L.J. was partially supported by NSF MSPRF grant DMS-2202444.\\ J.M. was partially supported by NSF research grant DMS-2443425.}
\dedicatory{Dedicated to Caucher Birkar.}
\begin{document}

\title[Toricity in families of Fano varieties]
{Toricity in families of Fano varieties}

\author[L.~Ji]{Lena Ji}
\address{Department of Mathematics, University of Illinois Urbana-Champaign, 214 Harker Hall, 1305 W. Green St., Urbana, IL 61801}
\email{lenaji.math@gmail.com}

\author[J.~Moraga]{Joaqu\'in Moraga}
\address{UCLA Mathematics Department, Box 951555, Los Angeles, CA 90095-1555, USA
}
\email{jmoraga@math.ucla.edu}

\begin{abstract} 
Rationality is not a constructible property in families. In this article, we consider stronger notions of rationality and study their behavior in families of Fano varieties. We first show that being toric is a constructible property in families of Fano varieties.
The second main result of this article concerns an intermediate notion that lies between toric and rational varieties, namely \emph{cluster type} varieties. 
A cluster type $\mathbb Q$-factorial Fano variety contains an open dense algebraic torus, but the variety does not need to be endowed with a torus action.
We prove that, in families of $\mathbb Q$-factorial terminal Fano varieties, being of cluster type is a constructible condition. As a consequence, we show that there are finitely many smooth families parametrizing $n$-dimensional smooth cluster type Fano varieties.
\end{abstract}

\maketitle

\setcounter{tocdepth}{1}
\tableofcontents

\section{Introduction} 

Let \(\cal X \to T\) be a family of projective varieties over \(\mathbb C\). The subset
\[
T_{\rm rational} \coloneqq\{ \text{closed } t \in T \mid \mathcal{X}_t \text{ is a rational variety, i.e., \(\mathcal X_t\) is birational to projective space} \} 
\]
is (the set of closed points of) a countable union of locally closed subsets of \(T\)
(see~\cite[Proposition 2.3]{dFF13}).
In smooth families, \(T_{\rm rational}\) is a countable union of closed subsets \cite{NicaiseShinder,KontsevichTschinkel}; however, for families with singular members, these subsets are in general neither open nor closed \cite{dFF13,Totaro16,Perry17}. Furthermore,
\(T_{\rm rational} \subseteq T\) is not in general a constructible subset,
i.e., it is not a finite union of locally closed subsets (in the Zariski topology) \cite{HassettPirutkaTschinkel}.
There are also families where \(T_{\rm rational} = T\), but the generic fiber is irrational as a \(k(T)\)-variety (see, e.g., \cite{Manin66,FJ24}).

From the perspective of the Minimal Model Program (MMP) it is natural to investigate the behavior of rationality in families of Fano varieties.
In the case of singular degenerations of Fano varieties, Birkar, Loginov, and Qu have shown that the dimension and log discrepancies of general fibers control the irrationality of the special fiber~\cite{BirkarLoginov21,BirkarQu24}.

One important class of rational varieties is the class of toric varieties, i.e., those that contain a dense algebraic torus whose action on itself extends to the whole variety.
Our first result is that being toric is a constructible condition in families of (klt) Fano varieties:

\begin{theorem}\label{thm:toric-Fano}
Let $\mathcal{X} \rightarrow T$ be a family of Fano varieties over \(\mathbb C\).
Then, the set 
\[
T_{\rm toric} \coloneqq\{ \text{closed } t \in T \mid \mathcal{X}_t \text{ is a toric variety}
\} 
\]
is a constructible subset of $T$. 
\end{theorem}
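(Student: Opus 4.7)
The core of the argument uses two fundamental results from the minimal model program. The first is the geometric characterization of toric varieties due to Brown, McKernan, Svaldi, and Zong (BMSZ): a projective $\mathbb Q$-factorial klt Fano variety $X$ is toric if and only if it admits a log canonical log Calabi--Yau pair $(X, B)$ of complexity zero, meaning $\sum b_i = \dim X + \rho(X)$, where $B = \sum b_i B_i$ is the decomposition into prime components. The second is Birkar's theorem on boundedness of complements, providing a uniform integer $N = N(n)$ such that every klt Fano variety of dimension $n$ admits an $N$-complement, i.e., a divisor $\Gamma \in |{-NK_X}|$ such that $(X, \tfrac{1}{N}\Gamma)$ is log canonical.

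By Noetherian induction on $T$, it suffices to prove constructibility after replacing $T$ by a dense open subset. I would thus assume $T$ is reduced and irreducible, $\pi\colon \mathcal X\to T$ is flat with fibers of constant dimension $n$, and, after further shrinking, that the fibers $X_t$ have constant divisor class group rank $\rho$ and that $\pi_* \mathcal O_{\mathcal X}(-NK_{\mathcal X/T})$ is locally free and compatible with base change. Then the relative complete linear system $\mathcal L \to T$ is a projective bundle carrying a universal divisor $\mathcal G\subset \mathcal L\times_T\mathcal X$.

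Define $\mathcal Z\subset \mathcal L$ as the locus of pairs $(t,\Gamma)$ for which $(X_t,\tfrac{1}{N}\Gamma)$ is log canonical and has complexity zero. Log canonicity defines a constructible subset (for instance, via resolution of the universal pair $(\mathcal X\times_T \mathcal L, \tfrac{1}{N}\mathcal G)$ and generic flatness of the log discrepancy data), and the complexity-zero condition becomes constructible once $\mathcal L$ is stratified by the combinatorial type of the prime decomposition of the universal divisor, using a relative Chow variety or relative Hilbert scheme of effective divisors on $\mathcal X/T$. On each stratum the total multiplicity $\sum_i a_i$ of the decomposition $\Gamma=\sum a_i B_i$ is locally constant, so the condition $\sum a_i = N(n+\rho)$ is locally closed. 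Hence $\mathcal Z$ is constructible, its image in $T$ is constructible by Chevalley's theorem, and by the BMSZ characterization this image is exactly $T_{\mathrm{toric}}$: toric fibers contribute via their toric boundary, which is a $1$-complement of complexity $0$, and any $\Gamma\in\mathcal Z$ forces $X_t$ to be toric.

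The main obstacle I expect is making the stratification of $\mathcal L$ rigorous. In a family, prime components of $\Gamma$ can degenerate, merge, or acquire extra multiplicity, so $\sum b_i$ is not a continuous function on $\mathcal L$. The fix is to parametrize $\Gamma$ together with a decomposition into prime pieces via the relative Chow variety (or a relative Hilbert scheme of divisors), which decomposes $\mathcal L$ into finitely many locally closed strata on which the combinatorics of the decomposition is constant. A secondary subtlety is that BMSZ is naturally stated in the $\mathbb Q$-factorial setting; for a general family one passes to a simultaneous small $\mathbb Q$-factorialization (available after stratifying $T$ by relative MMP techniques) and uses that toricity is invariant under small $\mathbb Q$-factorial modifications.
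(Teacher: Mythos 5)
Your overall strategy --- parametrize candidate boundaries by a relative anticanonical linear system, detect toricity via the Brown--M\textsuperscript{c}Kernan--Svaldi--Zong complexity-zero criterion, and conclude by Chevalley --- is the same as the paper's, and several of your ingredients are sound: log canonicity of the log fibers is indeed constructible (the paper proves this as Lemma~\ref{lem:log-canonical-constr}), and using Birkar's boundedness of complements to get a uniform \(N\) is a legitimate (if heavier) substitute for the paper's use of \cite[Lemma 2.7]{HX15} together with Noetherian induction to produce the linear system \(|-mK_{\mathcal X_U}|\).

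The genuine gap is your treatment of the Picard/class-group term in the complexity. You assert that after shrinking \(T\) the fibers have constant class group rank \(\rho\), and your criterion \(\sum a_i = N(n+\rho)\) depends on this. But constructibility (let alone generic constancy) of \(t\mapsto \operatorname{rank}\Cl_{\bb Q}(\mathcal X_t)\) is not justified for families of klt Fano varieties --- deformation invariance of the divisor class group is only known under strong hypotheses (terminal, \(\bb Q\)-factorial fibers, after an \'etale cover, by \cite[Theorem 4.1]{FHS21}), and the paper explicitly invokes that result only for the cluster-type theorem, not here. Relatedly, your Chow-variety stratification controls \(|\Sigma|=\frac{1}{N}\sum a_i\) but says nothing about \(\rho(\Sigma_t)\), the rank of the span of the components of \(B_t\) inside \(\Cl_{\bb Q}(\mathcal X_t)\), which is the quantity actually appearing in the complexity. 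This is precisely the point the paper's argument is engineered to avoid: it does not compute \(\rho(\mathcal X_t)\) at all, but instead proves a spreading-out statement (Proposition~\ref{prop:toric-log-CY-open} and Theorem~\ref{thm:toric-l-CY-open}) --- after an \'etale cover making the components of \(\mathcal B\) restrict to prime divisors on fibers and a \(\bb Q\)-factorial dlt modification, a Fano type morphism has \emph{injective} class group restrictions over a dense open (Lemma~\ref{lem:fano-type-num-triv-nbhd}), which forces \(\rho(\Sigma_t)\) to equal the rank of the span of the global components in \(\Cl_{\bb Q}(\mathcal X/T)\), hence to be constant; one toric fiber then makes all nearby log fibers toric, and Noetherian induction finishes. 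To repair your argument you would need either to justify generic constancy of the fiberwise class group rank for klt Fanos (which is not available) or to replace the condition \(\sum a_i = N(n+\rho)\) by the relative computation of \(\rho(\Sigma_t)\) via injective class group restrictions, which is essentially the paper's route. Your passing appeal to a ``simultaneous small \(\bb Q\)-factorialization after stratifying \(T\) by relative MMP techniques'' likewise hides the same difficulty rather than resolving it.
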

In fact, the Fano assumption is not necessary: being toric is a constructible condition in \emph{any} family of complex varieties (Theorem~\ref{thm:toric-constructible-any}).
We note that being toric is neither an open nor a closed condition (Examples~\ref{ex:toric-not-closed} and~\ref{ex:toric-not-open}).

Being toric places strong restrictions on the geometry of a variety.
For instance, smooth toric Fano varieties do not deform \cite{BienBrion} (see also \cite[Theorem 1.4]{dFH11}), so Theorem~\ref{thm:toric-Fano} is automatic in the smooth case.
Thus, we consider an intermediate class: \defi{cluster type varieties} (see Definition~\ref{defn:toric-pair}). This is a class of rational varieties generalizing toric varieties, and was introduced by Enwright, Figueroa, and the second author in~\cite{EFM24}.
For example, smooth toric varieties are covered by tori, up to a closed codimension two subset (see Proposition~\ref{prop:toric-covering}), whereas smooth cluster type Fano varieties contain an open dense torus (see \cite[Theorem 1.2 and Theorem 1.3(4)]{EFM24}).
We prove the following result about cluster type varieties in families of Fano varieties:

\begin{theorem}\label{thm:cluster-Fano}
Let $\mathcal{X}\rightarrow T$ be a family of $\mathbb{Q}$-factorial terminal Fano varieties over \(\mathbb C\).
The set 
\[
T_{\rm cluster}\coloneqq
\{ \text{closed } t\in T \mid \mathcal{X}_t \text{ is a cluster type variety}\} 
\]
is a constructible subset of $T$. 
\end{theorem}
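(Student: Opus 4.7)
The plan is to reduce the problem to Theorem~\ref{thm:toric-Fano} by parametrizing log Calabi-Yau complements of the fibers $\mathcal{X}_t$ in a finite-type auxiliary family and then detecting those fibers whose complement admits a toric crepant birational model. By Birkar's theorem on boundedness of complements for Fano varieties, there exists $N = N(n)$, depending only on $n = \dim \mathcal{X}_t$, such that every fiber $\mathcal{X}_t$ admits an $N$-complement: an effective $\mathbb{Q}$-divisor $B_t$ with $N(K_{\mathcal{X}_t} + B_t) \sim 0$ and $(\mathcal{X}_t, B_t)$ log canonical. Following the definition in \cite{EFM24}, $\mathcal{X}_t$ is of cluster type if and only if some such complement $(\mathcal{X}_t, B_t)$ is crepant birational to a toric log Calabi-Yau pair.

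After a finite stratification of $T$, I would parametrize all $N$-complements by a relative Hilbert-type scheme $\pi \colon \mathcal{H} \to T$ of finite type, carrying a universal pair $(\mathcal{X}_{\mathcal{H}}, \mathcal{B})$. Stratifying $\mathcal{H}$ further, using constructibility of log canonicity in flat families, I may assume $(\mathcal{X}_{\mathcal{H}}, \mathcal{B})$ is fiberwise log canonical log Calabi-Yau. A relative $\mathbb{Q}$-factorial dlt modification (after a further stratification) then yields a family $(\mathcal{Y}, \mathcal{D}) \to \mathcal{H}$ of dlt log Calabi-Yau pairs crepant birational to $(\mathcal{X}_{\mathcal{H}}, \mathcal{B})$, so that $\mathcal{X}_h$ is cluster type if and only if $(\mathcal{Y}_h, \mathcal{D}_h)$ is crepant birational to a toric log Calabi-Yau pair.

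To detect this property in families, I would construct an auxiliary family $\mathcal{Z} \to \mathcal{H}$ from $(\mathcal{Y}, \mathcal{D})$---for instance, a canonical birational model such as the ample model of a suitable log divisor, or an extraction associated to the dual complex of $\mathcal{D}$---with the property that $\mathcal{Z}_h$ is a toric Fano variety precisely when $\mathcal{X}_h$ is cluster type. Applying Theorem~\ref{thm:toric-Fano} to $\mathcal{Z} \to \mathcal{H}$ would then show that the cluster type locus in $\mathcal{H}$ is constructible, and its image under $\pi$ is $T_{\rm cluster} \subseteq T$, which is constructible by Chevalley's theorem. The main obstacle is the construction of $\mathcal{Z}$: showing uniformly in families that crepant birational equivalence to a toric log Calabi-Yau pair is detected by the toric property of a canonical birational model. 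This will draw on the complexity invariant of Brown--McKernan--Svaldi--Zong together with uniform behavior of the MMP in bounded families of log Calabi-Yau pairs.
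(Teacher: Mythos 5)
There is a genuine gap, and it sits at the center of your argument. You assert that ``$\mathcal{X}_t$ is of cluster type if and only if some complement $(\mathcal{X}_t,B_t)$ is crepant birational to a toric log Calabi--Yau pair.'' That is the definition of \emph{log rational} (Definition~\ref{defn:toric-pair}(3)), which is strictly weaker than cluster type: the cluster type condition additionally requires that the crepant birational map $(\mathbb{P}^n,\Sigma^n)\dashrightarrow(X,B)$ contract no divisors inside the torus, equivalently (Lemma~\ref{lem:dlt-mod-cluster}) that some $\mathbb{Q}$-factorial dlt modification admit a crepant birational \emph{contraction} to $(\mathbb{P}^n,\Sigma^n)$. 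The reverse implication fails, and constructibility of log rationality is posed in the paper as an open question; so a proof built on this equivalence is proving (or rather, assuming progress on) a different and apparently harder statement. Relatedly, the step you defer as ``the main obstacle'' --- producing a canonical model $\mathcal{Z}\to\mathcal{H}$ whose fibers are toric exactly when $\mathcal{X}_h$ is cluster type --- is not a technical detail but essentially the entire content of the theorem, and the paper does not construct any such canonical model. Instead it proves an openness statement (Theorem~\ref{thm:cluster-type-stabilize-open}): \emph{starting from} a fiber known to be cluster type, it spreads out the exceptional divisor $E_{s_0}$ of the contraction to $\mathbb{P}^n$ across the family and contracts it by a relative MMP, then applies the complexity criterion fiberwise. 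Making this work requires deformation invariance of class groups after an \'etale base change (\cite[Theorem 4.1]{FHS21}), invariance of sections on dlt modifications (Lemma~\ref{lem:invariance-of-sections}), and the extension of prime divisors property (Proposition~\ref{prop:nice-FT-fibration}); these are precisely where the $\mathbb{Q}$-factorial terminal hypotheses enter (Remark~\ref{rem:cluster-type-singularities-assumptions}), and your sketch never engages with them.

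The outer layer of your plan --- parametrize all index-one boundaries on the fibers by a finite-type scheme over $T$, stratify so that the universal pair is fiberwise lc log CY of index one, and reduce to a constructibility statement for families of log CY pairs (Theorem~\ref{thm:CY-cluster}) --- does match the paper's reduction in Section~\ref{sec:proofs-of-main-theorems}. One caveat even there: boundedness of complements gives the \emph{existence} of a complement on each fiber, but what the reduction needs is that \emph{every} candidate boundary on every fiber appears in the parametrizing family; the paper secures this via surjectivity of the restriction maps $H^0(\mathcal{X},-mK_{\mathcal{X}})\to H^0(\mathcal{X}_t,-mK_{\mathcal{X}_t})$ over a dense open set (\cite[Lemma 2.7]{HX15}) together with Lemma~\ref{lem:index-one}, which pins the relevant boundaries down to $\frac1m|-mK|$. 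So the reduction is salvageable, but the heart of the proof is missing.
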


The $\qq$-factorial terminal assumption in the previous theorem is vital for our proof to work (see Remark~\ref{rem:cluster-type-singularities-assumptions}).
Many properties of families of terminal Fano varieties do not hold if we drop the condition
on the singularities (see, e.g.,~\cite[Remark 6.2]{dFH11}).
As a consequence of Theorem~\ref{thm:cluster-Fano} and a result of Koll\'ar--Miyaoka--Mori \cite{KMM92}, we get the following corollary about cluster type smooth Fano varieties.
\begin{corollary}\label{introcor:ct-smooth-Fano}
Let $n$ be a positive integer. 
Then, there are finitely many smooth projective morphisms $f_i\colon \mathcal{X}_i\rightarrow T_i$ parametrizing 
$n$-dimensional smooth cluster type Fano varieties over \(\mathbb C\).
\end{corollary}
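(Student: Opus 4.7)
The plan is to combine the boundedness of smooth Fanos with Theorem~\ref{thm:cluster-Fano} and then stratify.

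First, by the Koll\'ar--Miyaoka--Mori theorem~\cite{KMM92}, the class of $n$-dimensional smooth Fano varieties over $\mathbb C$ forms a bounded family. Concretely, one can choose a single projective morphism $\pi\colon \mathcal Y \to S$ of finite type $\mathbb C$-schemes such that every $n$-dimensional smooth Fano variety appears as a closed fiber of $\pi$. Since smoothness of the morphism is an open condition on the base, after restricting to the open locus where $\pi$ is smooth we may assume $\pi$ is a smooth projective morphism; every closed fiber is then a smooth (hence $\mathbb Q$-factorial and terminal) Fano variety.

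Second, I would apply Theorem~\ref{thm:cluster-Fano} to $\pi$: the hypotheses are satisfied since smooth Fano varieties are automatically $\mathbb Q$-factorial and terminal. This yields that
\[
S_{\rm cluster} = \{\text{closed } s \in S \mid \mathcal Y_s \text{ is a cluster type variety}\}
\]
is a constructible subset of $S$. By definition of constructible, we may write $S_{\rm cluster}$ as a finite disjoint union of locally closed subsets $S_1, \dots, S_N$ of $S$. Up to further stratifying each $S_i$ into a finite union of smooth locally closed subsets (using generic smoothness of reduced schemes of finite type over $\mathbb C$), we may assume each $S_i$ is smooth.

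Finally, set $T_i \coloneqq S_i$ and $\mathcal X_i \coloneqq \mathcal Y \times_S S_i$, with $f_i\colon \mathcal X_i \to T_i$ the base change of $\pi$. Each $f_i$ is then a smooth projective morphism whose closed fibers are exactly the $n$-dimensional smooth cluster type Fano varieties appearing over $S_i$, and together the $f_i$ parametrize all such varieties. There is no genuine obstacle beyond verifying that the constructibility output of Theorem~\ref{thm:cluster-Fano} can be refined to a finite collection of smooth morphisms; the only mild subtlety is ensuring that passing to locally closed strata preserves smoothness of the family, which follows because smoothness of $\pi$ is stable under base change.
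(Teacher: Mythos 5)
Your argument is correct and follows essentially the same route as the paper: boundedness of $n$-dimensional smooth Fanos via Koll\'ar--Miyaoka--Mori, a reduction to parametrizing families of smooth Fanos, an application of Theorem~\ref{thm:cluster-Fano}, and a decomposition of the resulting constructible locus into finitely many locally closed pieces. The only point to make explicit is that restricting to the smooth locus of $\pi$ does not by itself ensure every closed fiber is Fano; you also need to cut down to the (open, for a smooth family) locus where $-K$ is ample on fibers, which is exactly the ``standard arguments'' step the paper invokes, and the extra stratification to make the bases $T_i$ smooth is harmless but not required by the statement.
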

By \defi{parametrizing}, we mean here that every $n$-dimensional smooth cluster type Fano appears as the closed fiber of some $f_i$, and, moreover, \emph{every} closed fiber of each $f_i$ is in this class of varieties.
This notion is stronger than boundedness,
but weaker than moduli, since the same variety could appear as a fiber multiple times.
Furthermore, from the proof of Theorem~\ref{thm:cluster-Fano}, it follows that all fibers of each $f_i$ in Corollary~\ref{introcor:ct-smooth-Fano} admit the same combinatorial structure, in the sense that a certain birational transformation to a toric pair can be performed in the whole family (see Lemma~\ref{lem:dlt-mod-cluster}).

On the other hand, the analogous statement for \emph{rational} varieties is false:
\begin{theorem}\label{thm:rat-fam}
    Let \(n\geq 4\) be an integer. Then, there do not exist finitely many smooth projective morphisms parametrizing \(n\)-dimensional smooth rational Fano varieties.
\end{theorem}

In the toric case, the analogous statement to Corollary~\ref{introcor:ct-smooth-Fano} was already known: smooth toric Fano varieties correspond to smooth reflexive polytopes (see, e.g., \cite[Theorem 8.3.4]{CLS11}), so in each dimension \(n\) there are only finitely many up to isomorphism.
Note that smooth cluster type Fanos do deform, so unlike in the toric case,
the parametrizing varieties \(T_i\) in Corollary~\ref{introcor:ct-smooth-Fano} are in general positive-dimensional.
For example, every smooth del Pezzo surface of degree \(\geq 2\) is of cluster type by~\cite[Theorem 2.1]{ALP23} and~\cite[Lemma 1.13]{GHK15}.

\subsection{Outline}
In Section~\ref{sec:preliminaries}, we begin by recalling definitions and preliminary results on singularities of pairs, toric log Calabi--Yau pairs, and cluster type log Calabi--Yau pairs. We prove several lemmas that we will apply in later sections.
Next, in Sections~\ref{sec:toric} and~\ref{sec:cluster-type}, we prove versions of Theorems~\ref{thm:toric-Fano} and~\ref{thm:cluster-Fano} for families of Fano varieties with a log Calabi--Yau pair structure. Then, in Section~\ref{sec:proofs-of-main-theorems}, we use our results from Sections~\ref{sec:toric} and~\ref{sec:cluster-type} to prove the main theorems. Finally, in Section~\ref{sec:examples-and-questions}, we end the article with examples of different behaviors of toricity in families, and we pose some related questions.

\subsection*{Acknowledgements}
We are grateful to J\'anos Koll\'ar and Burt Totaro for suggesting Theorem~\ref{thm:toric-constructible-any}, and to Brendan Hassett for suggesting Theorem~\ref{thm:rat-fam}.
We also thank Louis Esser and Stefano Filipazzi for helpful conversations and comments, and we thank the anonymous referee for their careful reading and helpful comments.
Part of this work was carried out while the authors were visiting the Yau Mathematical Sciences Center at Tsinghua University, and we thank Caucher Birkar and Spring Ma for their hospitality.

\section{Preliminaries}\label{sec:preliminaries}

We work over an algebraically closed field of characteristic \(0\). 
We write $\Sigma^n$ for the sum of the coordinate hyperplanes of $\pp^n$. Throughout, a Fano variety will mean a projective variety \(X\) such that \(-K_X\) is ample and \(X\) has klt singularities.

In this section, we introduce preliminary results on singularities of the minimal model program, toric and cluster type log Calabi--Yau pairs, families of pairs, and dlt modifications.

\subsection{Singularities of the MMP}

\begin{definition}
{\em 
    A \defi{pair} \((X,B)\) consists of a normal quasi-projective variety \(X\) and an effective $\qq$-divisor \(B\) on \(X\) such that \(K_X+B\) is \(\bb Q\)-Cartier. If \(\{B_i \mid i \in I\}\) are the prime components of \(B\), then the \defi{strata} of \((X,B)\) are the irreducible components of the intersections \(\bigcap_{i \in J, J \subset I} B_i\).
    The \defi{index} of a pair \((X,B)\) is the smallest positive integer \(m\) such that \(m(K_X + B)\) is Cartier.
}
\end{definition}

\begin{definition}
{\em 
    Let \((X,B)\) be a pair. Given a projective birational morphism \(\pi\colon Y \to X\) from a normal quasi-projective variety and a prime divisor \(E\) on \(Y\), the \defi{log discrepancy} of \((X, B)\) at \(E\) is \[a_E(X, B) \coloneqq 1 - \coeff_E(\pi^*(K_X + B)).\]
    A pair \((X,B)\) is \defi{terminal} (resp. \defi{canonical}, \defi{Kawamata log terminal (klt)}, \defi{lc canonical (lc)}) if \(a_E(X, B) > 1\) (resp. \(a_E(X, B) \geq 1\), \(a_E(X, B) >0\) and \(\lfloor B \rfloor = 0\), \(a_E(X, B) \geq 0\)) for every exceptional divisor \(E\) over \(X\).
}
\end{definition}

\begin{definition}
{\em
Let \((X,B)\) be a pair and \(f\colon Y \to X\) a birational morphism. The \defi{log pullback of \(B\)} is the (not necessarily effective) \(\bb Q\)-divisor \(B_Y\) defined by \(K_Y+B_Y = f^*(K_X+B)\) and \(f_* B_Y = B\).
}
\end{definition}

\begin{definition}
{\em 
    Let \((X,B)\) be a log canonical pair. A divisor \(E\) over \(X\) is a \defi{log canonical place} (resp. \defi{canonical place}, \defi{non-canonical place}, \defi{terminal place}, \defi{non-terminal place}) if \(a_E(X, B) = 0\) (resp. \(a_E(X,B) = 1\), \(a_E(X,B) < 1\), \(a_E(X,B) > 1\), \(a_E(X,B)\leq 1\)). The \defi{center} of \(E\) on \(X\) is the closure of its image in \(X\), and \(E\) is \defi{exceptional} if its center on \(X\) is not a divisor.
}
\end{definition}

\begin{definition}
{\em
Let \(\phi\colon Y \dashrightarrow X\) be a rational map of normal projective varieties. We say \(\phi\) \defi{contracts} a Weil divisor \(E \subset Y\) if the center of \(E\) on \(X\) is not a divisor. We say \(\phi\) \defi{extracts} a Weil divisor \(F \subset X\) if \(\phi^{-1}\) contracts \(F\).
The map \(\phi\) is a \defi{birational contraction} if it is birational and does not extract any divisors.
}
\end{definition}

\begin{definition}
{\em 
    An lc pair \((X,B)\) is \defi{divisorially log terminal (dlt)} if the coefficients of \(B\) are at most one and if there is an open subset \(U \subset X\) such that \begin{enumerate} \item \(U\) is smooth and \(B|_U\) is an snc divisor, and \item if \(E\) is a divisor over \(X\) with \(a_E(X, B) = 0\), then the center of \(E\) on \(X\) intersects \(U\). \end{enumerate}

    If \((X, B)\) is an lc pair, a \defi{\(\bb Q\)-factorial dlt modification} of \((X,B)\) is a projective birational morphism \(\pi\colon Y \to X\) from a \(\bb Q\)-factorial normal variety \(Y\) such that \({\rm Ex}(\pi)\) is a divisor, \(\pi\) only contracts log canonical places, and the log pullback \((Y, B_Y)\) is dlt. A \(\bb Q\)-factorial dlt modification exists for any lc pair by \cite[Theorem 3.1]{KollarKovacs10}.
    }
\end{definition}

\begin{definition}\label{defn:fano-type-log-CY-pair}
{\em 
    Let \(X\) be a normal projective variety.
    \begin{enumerate}
    \item We say \(X\) is \defi{of Fano type} if there exists an effective \(\bb Q\)-divisor \(B\) such that \((X, B)\) is a klt pair and \(-(K_X+B)\) is big and semiample.
    \item A \defi{log Calabi--Yau (log CY) pair} is an lc pair \((X,B)\) such that \(K_X + B \sim_{\bb Q} 0\).
    \end{enumerate}
}
\end{definition}

\(X\) is Fano type if and only if there exists an effective \(\bb Q\)-divisor \(B\) on \(X\) such that \((X,B)\) is a klt pair and \(-(K_X+B)\) is ample \cite[Lemma-Definition 2.6]{ProkhorovShokurov09}.
By \cite{FG14} an lc pair \((X,B)\) is log CY if and only if \(K_X + B \equiv 0\).

\begin{definition}
{\em
Two pairs \((X_1, B_1)\) and \((X_2, B_2)\) are \defi{crepant birational equivalent}, written \((X_1, B_1)\simeq_{\rm cbir} (X_2, B_2)\), if there exist projective birational morphisms \(f_i\colon Y \to X_i\) such that the log pullbacks of \(B_i\) on \(Y\) are equal.
}
\end{definition}

\subsection{Toric log Calabi--Yau pairs}
We will next define toric and cluster type log Calabi--Yau pairs. Before this, we first prove a proposition regarding a statement mentioned in the introduction: 
\begin{proposition}\label{prop:toric-covering}
Let $T$ be a normal quasi-projective toric variety of dimension $n$. Then, there exists a torus-invariant closed subset $Z\subset T$, of codimension at least two, such that $T\setminus Z$ is covered by copies of $\mathbb{G}_m^n$. 
\end{proposition}

\begin{proof}
Let $\Sigma$ be the fan in $\qq^n$ defining $T$, i.e., 
$T\simeq X(\Sigma)$.  By~\cite[Theorem 3.2.6(a)]{CLS11}, \(T\) has finitely many torus-invariant closed subsets of codimension at least two, so their union \(Z\) is a closed subset of \(T\) of codimension at least two.
We argue that $T\setminus Z$ is covered by copies of $\mathbb{G}_m^n$.
Let $\Sigma(1)$ be the set of rays of $\Sigma$,
and for $\rho\in \Sigma(1)$, let $D_\rho$ be the associated torus-invariant prime divisor. Now,
fix an order $\rho_1,\dots,\rho_k$ of the elements of $\Sigma(1)$. 
Then, for each $j \in \{1,\dots,k\}$, we have 
\[
T \setminus \bigcup_{i\neq j} D_{\rho_i} = \bigcup_{\tau \preceq \rho_i} O(\tau) \simeq \mathbb{G}_m^{n-1} \times \mathbb{A}^1
\]
by~\cite[Theorem 3.2.6(c)]{CLS11}
because \(\rho_i\) is a ray. This implies that $T\setminus Z$ is covered by copies of $\mathbb{G}_m^n$. 
\end{proof}

\begin{definition}\label{def:toric-pair}
{\em 
A pair $(X,B)$ is said to be \defi{toric}
if $X$ is a projective toric variety
and $B$ is a torus-invariant divisor.
}
\end{definition}

Recall that if \(X\) is a normal toric variety and \(B\) is the reduced sum  of the torus-invariant divisors, then \(K_X + B \sim 0\)~\cite[Theorem 8.2.3]{CLS11} and the pair \((X, B)\) is lc \cite[Proposition 3.7]{Kollar97}.
In the case of log CY pairs, the following definitions generalize the notion of being toric.

\begin{definition}[\cite{EFM24}]\label{defn:toric-pair}
{\em 
Let \((X,B)\) be a log CY pair.
\begin{enumerate}
    \item We say that $(X,B)$ is \defi{toric}
if $X$ is a normal toric variety and $B$ is the reduced
sum of the torus-invariant divisors.\footnote{For log CY pairs, this condition is equivalent to $(X,B)$ being a toric pair as in Definition~\ref{def:toric-pair}.}
\item We say that $(X,B)$ is \defi{of cluster type} if there exists a crepant birational 
map $\phi\colon (\pp^n,\Sigma^n)\dashrightarrow (X,B)$ such that $\mathbb{G}_m^n \cap {\rm Ex}(\phi)$
contains no divisors on $\mathbb{P}^n$. 
\item We say that $(X,B)$ is \defi{log rational} if $(X,B)\simeq_{\rm cbir} (T,B_T)$ for $T$ a normal projective toric variety
and $B_T$ the reduced sum of the torus-invariant divisors;
this condition is equivalent to $(X,B) \simeq_{\rm cbir} (\pp^n,\Sigma^n)$.
\end{enumerate}
In a similar vein, we say that an algebraic variety $X$ is of \defi{cluster type} (resp. \defi{log rational}) if it admits a boundary divisor $B$ for which $(X,B)$ is a cluster type 
(resp. log rational) log CY pair.}
\end{definition}

Every toric log CY pair $(X,B)$ is of cluster type, as it admits a torus-equivariant 
crepant birational map $(\pp^n,\Sigma^n)\dashrightarrow (X,B)$ that is an isomorphism
on $\mathbb{G}_m^n$.
Thus, the following implications hold for a log CY pair \((X, B)\):
\[\text{ $(X,B)$ is toric} \implies \text{ $(X,B)$ is of cluster type} \implies \text{$(X,B)$ is log rational}.\]

The reverse implications do not hold; see Example~\ref{ex:cluster-not-closed} and \cite[Theorem 3.3 and Proposition 5.1]{EFM24}.
If $(X,B)$ is a log CY pair of index one, then $(X,B)$ admits a toric
model if and only if the pair $(X,B)$ has birational
complexity zero \cite[Definition 2.28 and Theorem 1.6]{MauriMoraga24}.
Since crepant birational equivalences preserve the indices of log CY pairs (see, e.g.,~\cite[Lemma 3.2]{FMM22}),
any log rational log CY pair necessarily has index one:
\begin{lemma}\label{lem:index-one}
If $(X,B)$ is a log CY pair that is log rational,
then $K_X+B\sim 0$. In particular, $B$ is a reduced Weil divisor.
\end{lemma}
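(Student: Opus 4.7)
The plan is to unwind the definition of log rational and transfer the \emph{integral} linear equivalence \(K_{\pp^n}+\Sigma^n\sim 0\) across the crepant birational equivalence to \((X,B)\).

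First I would invoke the definition: since $(X,B)$ is log rational, by Definition~\ref{defn:toric-pair}(3) we have $(X,B)\simeq_{\rm cbir}(\pp^n,\Sigma^n)$, so there is a normal variety $Y$ with proper birational morphisms $f_1\colon Y\to X$ and $f_2\colon Y\to \pp^n$ whose log pullbacks coincide. In particular, writing $B_Y$ for this common log pullback,
\[
K_Y+B_Y=f_1^*(K_X+B)=f_2^*(K_{\pp^n}+\Sigma^n),
\]
and $(f_1)_*B_Y=B$.

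Next I would use that $K_{\pp^n}+\Sigma^n\sim 0$ as an integral Weil (in fact Cartier) divisor on $\pp^n$ (see, e.g., \cite[Theorem~8.2.3]{CLS11}). Pulling this principal divisor back to $Y$ gives a principal divisor, so $K_Y+B_Y\sim 0$; concretely, $K_Y+B_Y={\rm div}(g)$ for some nonzero rational function $g$. Then I would push forward by $f_1$: for a proper birational morphism between normal varieties, the field of rational functions is unchanged, and the pushforward of a principal divisor is the principal divisor of the same rational function. Hence
\[
K_X+B=(f_1)_*(K_Y+B_Y)=(f_1)_*{\rm div}(g)={\rm div}(g)\sim 0,
\]
which is the first assertion.

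Finally, for the ``in particular'' part, I would observe that $K_X$ is an integral Weil divisor, so $K_X+B\sim 0$ forces $B\sim -K_X$ to have integer coefficients, i.e., $B$ is a genuine Weil divisor. Combined with the fact that $B$ is effective and that $(X,B)$ being a log CY (hence lc) pair forces all coefficients of $B$ to lie in $[0,1]$, we conclude that all coefficients of $B$ are either $0$ or $1$, so $B$ is reduced.

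The argument is essentially bookkeeping; the only point requiring a moment of care is the distinction between $\bb Q$-linear and $\bb Z$-linear equivalence. The key input that makes the proof work---and which is not automatic for an arbitrary log CY pair---is that on the toric model $(\pp^n,\Sigma^n)$ the anticanonical representative $\Sigma^n$ realizes $-K_{\pp^n}$ on the nose, so the equivalence transferred to $Y$ (and then to $X$) is integral rather than merely rational.
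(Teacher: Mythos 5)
Your proof is correct and follows the same route the paper takes: the paper simply cites \cite[Lemma 3.2]{FMM22} for the fact that crepant birational equivalence preserves the index of a log CY pair and combines it with $K_{\pp^n}+\Sigma^n\sim 0$, whereas you prove that index-preservation directly by pulling back and pushing forward the principal divisor through the common resolution. The only point worth double-checking is the one you already flag—that the equivalences stay integral at each step—and your handling of it is fine.
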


We now state several results about toric log CY pairs that we will use in the proofs of Theorems~\ref{thm:toric-Fano} and~\ref{thm:cluster-Fano}. First, we recall some invariants of pairs that
Brown--M\textsuperscript{c}Kernan--Svaldi--Zong \cite{BMSZ18} used to characterize toric log CY pairs:

\begin{definition}[{Cf. \cite[Definition 1.1]{BMSZ18}}]\label{defn:complexity}
{\em 
Let \((X,B)\) be a pair.
\begin{enumerate}
\item A \defi{decomposition} \(\Sigma\) of \(B\) is a finite formal sum \(\sum_{i=1}^k \alpha_i B_i = B\) where each \(\alpha_i \geq 0\) and \(B_i\) is a reduced (not necessarily irreducible) effective divisor. \item The \defi{Picard rank} of \(\Sigma\) is \(\rho(\Sigma)\coloneqq\dim(\mathrm{span}\{B_i \mid 1 \leq i \leq k\})\) where the span is inside \(\Cl_{\bb Q} X \coloneqq \Cl X\otimes_{\bb Z}\bb Q\). The \defi{norm} of \(\Sigma\) is \(|\Sigma| \coloneqq \sum_{i=0}^k \alpha_i\). The \defi{complexity} of the decomposition \((X,B;\Sigma)\) is \(c(X,B;\Sigma) \coloneqq \dim X + \rho(\Sigma) - |\Sigma|.\)\end{enumerate}
}
\end{definition}

If \((X,B)\) is a toric log CY pair and \(\Sigma\) is the decomposition of \(B\) given by the sum of its prime divisors, then \(c(X,B;\Sigma) = 0\) (see, e.g., \cite[page 2]{BMSZ18}).

\begin{theorem}[{\cite[Theorem 1.2]{BMSZ18}}]\label{thm:BMSZ-toric} Let \((X,B)\) be a log CY pair and \(\Sigma\) a decomposition of \(B\). Then \(c(X,B;\Sigma) \geq 0\).
If \(c(X,B;\Sigma) < 1\), then there exists a divisor \(D \geq \lfloor B \rfloor\) such that \((X, D)\) is a toric log CY pair, and all but \(\lfloor 2 c(X,B;\Sigma) \rfloor\) components of \(D\) are elements of the set \(\{B_i \mid 1 \leq i\leq k\}\).
In particular, \((X, \lfloor B\rfloor)\) is a (not necessarily log CY) toric pair.
\end{theorem}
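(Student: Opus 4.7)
My plan is to proceed by induction on \(\dim X\), using the log minimal model program (MMP) to reduce to the case of a Mori fiber space. First, I would reduce to the setting where \(X\) is \(\qq\)-factorial dlt by taking a \(\qq\)-factorial dlt modification; since this only extracts log canonical places (all contributing coefficient one to the log pullback of \(B\)), one can track how each \(B_i\) pulls back so that the complexity is non-increasing, which suffices for the non-negativity claim.

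For the inequality \(c(X,B;\Sigma) \geq 0\): since \(-K_X \sim_{\qq} B\) is effective, a \(K_X\)-MMP terminates either with \(K_X\) nef (forcing \(B = 0\), so the inequality is trivial) or at a Mori fiber space \(f \colon X' \to Z\). I would split each \(B_i\) into horizontal and vertical parts with respect to \(f\), restrict to a general fiber \(F\) (which is of Fano type, so the inductive hypothesis applies), and push forward to \(Z\). Combining \(\rho(X') = \rho(F) + \rho(Z)\) with the sub-additivity \(\rho(\Sigma) \leq \rho(\Sigma|_F) + \rho(\Sigma_Z)\), one obtains
\[
c(X,B;\Sigma) \geq c(F, B|_F;\Sigma|_F) + c(Z, B_Z; \Sigma_Z),
\]
and induction on dimension closes the argument. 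For the second statement, when \(c(X,B;\Sigma) < 1\), the inductive hypothesis produces toric log CY structures \((F, D_F)\) and \((Z, D_Z)\) with \(D_F \geq \lfloor B|_F \rfloor\) and \(D_Z \geq \lfloor B_Z \rfloor\), differing from the pushforwards by at most \(\lfloor 2 c(F, B|_F; \Sigma|_F)\rfloor\) and \(\lfloor 2 c(Z, B_Z; \Sigma_Z)\rfloor\) extra components, respectively. Since these two sub-complexities add to at most \(c(X,B;\Sigma) < 1\), the total number of extra components is bounded by \(\lfloor 2c \rfloor\). A Mori fiber space whose general fiber and base are toric with compatible boundary data is itself toric, which assembles a toric structure on \(X'\); pulling the resulting torus-invariant boundary back through the divisorial contractions and flips of the MMP yields the desired divisor \(D\) on \(X\).

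The main obstacle is the careful bookkeeping of components under the MMP, especially because \(\Sigma\) groups prime divisors into possibly reducible \(B_i\): a divisorial contraction collapsing a single prime component can decrease \(\rho(\Sigma)\) without decreasing \(|\Sigma|\), potentially changing the complexity in subtle ways. I would need to verify that each MMP step preserves or only slightly decreases the complexity, handle flips (where \(\rho\) is unchanged but intersection numbers shift discretely), and ensure the final \(D\) on \(X\) still satisfies \(D \geq \lfloor B \rfloor\). Showing that the sub-additivity \(\rho(\Sigma) \leq \rho(\Sigma|_F) + \rho(\Sigma_Z)\) in the Mori fiber space step is essentially sharp whenever \(c < 1\)---so that no Picard rank is ``wasted'' and the toric structures on fiber and base really do lift---is the most delicate point of the argument.
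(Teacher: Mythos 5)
The paper does not prove this statement at all: it is quoted from Brown--M\textsuperscript{c}Kernan--Svaldi--Zong \cite[Theorem 1.2]{BMSZ18} and used as a black box, so there is no internal argument to compare yours against. Your sketch does follow the broad strategy of the actual proof in \cite{BMSZ18} (reduction to a \(\mathbb{Q}\)-factorial dlt model, an MMP terminating in a Mori fiber space, adjunction to the general fiber, the canonical bundle formula on the base, and subadditivity of complexity under the fibration), but as a proof it has gaps that go well beyond the ``careful bookkeeping'' you flag.

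The decisive gap is the assertion that ``a Mori fiber space whose general fiber and base are toric with compatible boundary data is itself toric.'' This is false: the flag threefold \(\mathbb{P}(T_{\mathbb{P}^2})\to\mathbb{P}^2\) is a Mori fiber space with fiber \(\mathbb{P}^1\) and base \(\mathbb{P}^2\), both toric and both carrying their standard toric log CY boundaries, yet the total space is not toric (it is not the projectivization of a decomposable bundle, and every smooth toric variety of Picard rank two is of that form). The entire difficulty of the theorem is concentrated at this point: one must use the hypothesis \(c<1\) to manufacture a torus action on the total space, not merely on fiber and base, and this is where most of the work in \cite{BMSZ18} lies; your remark that the subadditivity must be ``essentially sharp'' gestures at this but does not supply the argument. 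Two further issues: termination of the \(K_X\)-MMP you propose is not known for lc log CY pairs in general (one must instead run an MMP for a carefully chosen auxiliary klt pair and invoke \cite{BCHM}); and transporting the torus-invariant boundary from the Mori fiber space back to \(X\) through the inverses of the divisorial contractions and flips requires knowing that the extracted divisors are toric valuations of the toric structure downstream --- which is essentially Lemma~\ref{lem:toric-mod} of the present paper, itself deduced from the theorem you are trying to prove, so that step as written is circular. The proposal is therefore a reasonable outline of the known strategy rather than a proof.
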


The following lemma is well known to the experts (see, e.g.,~\cite[Lemma 2.3.2]{BMSZ18}).

\begin{lemma}\label{lem:toric-contr}
Let $(X, B)$ be a projective toric log CY pair, and
let $\pi\colon X\dashrightarrow Y$ be a birational contraction
to a projective variety.
Then $(Y, \pi_* B)$ is a toric log CY pair,
and $\pi$ is a toric birational map.
\end{lemma}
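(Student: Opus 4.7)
The plan is to verify that $(Y, \pi_{*}B)$ is an index-one log Calabi--Yau pair and then apply the complexity criterion of Brown--M\textsuperscript{c}Kernan--Svaldi--Zong (Theorem~\ref{thm:BMSZ-toric}) to the prime decomposition of $\pi_{*}B$.

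For the log Calabi--Yau structure, I would take a common log resolution $p\colon W\to X$, $q\colon W\to Y$, and let $B_W$ be the log pullback of $B$, so $K_W+B_W=p^{*}(K_X+B)\sim 0$. Since $\pi$ is a birational contraction, no divisor on $Y$ is extracted; hence every $p$-exceptional divisor on $W$ is also $q$-exceptional (a non-$q$-exceptional $p$-exceptional divisor would descend to an extracted divisor on $Y$). Pushing forward by $q$ then yields $K_Y+\pi_{*}B=q_{*}(K_W+B_W)\sim 0$; in particular $K_Y+\pi_{*}B$ is Cartier and $(Y,\pi_{*}B)$ is an index-one log CY pair, which is lc by crepant birational equivalence with $(X,B)$.

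For the complexity, let $r$ be the number of prime components of $B$ and $k$ the number of them contracted by $\pi$, so $|\Sigma_Y|=r-k$. Since $X$ is toric, the classes $[B_i]$ generate $\Cl_{\mathbb Q} X$; hence the images $[\pi_{*}B_i]$ generate $\Cl_{\mathbb Q} Y$, and $\rho(\Sigma_Y)=\dim \Cl_{\mathbb Q} Y=(r-n)-\dim\ker\pi_{*}$. The key step is to show the $k$ classes of the contracted toric components are linearly independent in $\Cl_{\mathbb Q} X$: any such relation corresponds, via the toric exact sequence $0\to M\to \bigoplus_i \mathbb Z B_i\to \Cl X\to 0$, to a character $\chi\in M$ orthogonal to every non-contracted ray, so $\operatorname{div}_X(\chi)$ is supported on the contracted toric divisors. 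Pushing forward gives $\operatorname{div}_Y(\chi)=0$; thus $\chi$ is regular and nowhere vanishing on the normal projective variety $Y$, therefore constant, forcing $\chi=0$. This yields $\dim\ker\pi_{*}\geq k$, whence $c(Y,\pi_{*}B;\Sigma_Y)\leq 0$; combined with the lower bound $c\geq 0$ from Theorem~\ref{thm:BMSZ-toric} we conclude $c=0$, and the same theorem then upgrades this to $(Y,\pi_{*}B)$ being a toric log CY pair.

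Finally, for the toricity of $\pi$: since $\pi$ is an isomorphism on an open subset of $X$ whose complement has codimension $\geq 2$, it restricts to a birational morphism between the open tori $T_X:=X\setminus B\cong \mathbb G_m^n$ and $T_Y:=Y\setminus \pi_{*}B\cong \mathbb G_m^n$ (after extending by Hartogs, using that $T_Y$ is affine and $T_X$ is normal). Every morphism $\mathbb G_m^n\to \mathbb G_m^n$ is the composition of a group homomorphism with a translation, and birationality forces the group-homomorphism part to be an isomorphism; so after translating the torus embedding in $Y$, $\pi$ becomes torus-equivariant for the resulting toric structures. The main obstacle in the plan is the linear-independence argument for the contracted toric classes, which is handled by the character computation above.
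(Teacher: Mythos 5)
The paper does not actually prove this lemma; it is stated as ``well known'' with a citation to \cite[Lemma~2.3.2]{BMSZ18}, so there is no internal proof to compare against. Your route --- establish that $(Y,\pi_*B)$ is an index-one log CY pair by pushing forward from a common resolution, then compute that the complexity of the prime decomposition of $\pi_*B$ is zero and invoke Theorem~\ref{thm:BMSZ-toric} --- is sound and is the same mechanism the paper itself uses in the proof of Lemma~\ref{lem:toric-mod}. The key count is right: $|\Sigma_Y|=r-k$, $\rho(\Sigma_Y)=\dim\Cl_{\bb Q}Y=(r-n)-\dim\ker\pi_*$, and the linear independence of the contracted classes (your character argument works; note it is really a completely general fact about birational contractions of normal projective varieties --- if $\sum a_jE_j\sim_{\bb Q}0$ with all $E_j$ contracted, the corresponding rational function has trivial divisor on $Y$, hence is constant --- no toric input needed). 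This gives $c\le 0$, hence $c=0$, hence $(Y,\pi_*B)$ toric by Theorem~\ref{thm:BMSZ-toric}. That part is correct.

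The gap is in the last step. The assertion that ``$\pi$ is an isomorphism on an open subset of $X$ whose complement has codimension $\ge 2$'' is false: a birational contraction contracts divisors (that is the point of the lemma), so its isomorphism locus can omit divisors of $X$; what is true is only that $\pi$ is \emph{defined} outside codimension two. Consequently the claim that $\pi$ restricts to a birational map $T_X\dashrightarrow T_Y$ does not follow as stated. To repair it you need two facts you never record: (a) every divisor contracted by $\pi$ is a component of $B$ --- this does follow from your own computation, since an extra contracted prime divisor would contribute an additional independent class to $\ker\pi_*$ (by the same trivial-divisor argument) and force $c<0$, contradicting Theorem~\ref{thm:BMSZ-toric}; and (b) each character $\chi$ of $T_Y$ pulls back to a \emph{unit} on $T_X$, which is needed before Hartogs lands you in $\mathbb G_m^n$ rather than merely in $\mathbb A^n$. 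Point (b) follows from (a): $\pi_*\operatorname{div}_X(\chi)=\operatorname{div}_Y(\chi)$ is supported on $\operatorname{supp}(\pi_*B)$, so $\operatorname{div}_X(\chi)$ is supported on the strict transforms of those components together with the $\pi$-contracted divisors, i.e.\ inside $\operatorname{supp}(B)$. With (a) and (b) in hand, your rigidity argument for maps of tori (monomial map composed with a translation, invertible by birationality) correctly yields that $\pi$ is toric. So the conclusion stands, but the justification of the final paragraph as written is broken and needs the two insertions above.
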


The following result, which shows that certain birational modifications of toric log CY pairs are toric, will be useful in the proofs of both Theorems~\ref{thm:toric-Fano} and~\ref{thm:cluster-Fano}.
\begin{lemma}\label{lem:toric-mod}
Let $(X,B)$ be a projective toric log CY pair,
let $p\colon Y\dashrightarrow X$ be a birational contraction that only contracts non-canonical places of $(X,B)$,
and let $(Y,B_Y)$ be the log pullback of $(X,B)$ to $Y$.
Then $(Y,B_Y)$ is a toric log CY pair, and $Y\dashrightarrow X$ is a toric projective birational contraction.
\end{lemma}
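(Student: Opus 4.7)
The plan is to show that every $p$-exceptional divisor is actually a log canonical place of $(X,B)$, then to realize $Y$ as a toric refinement of $X$ whose fan has explicit rays.

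First, I would exploit the index of $(X,B)$. Since $B$ is the reduced torus-invariant boundary of the toric variety $X$, the divisor $K_X+B$ is Cartier (not merely $\bb Q$-Cartier). Hence for every divisor $E$ over $X$, the log discrepancy $a_E(X,B)$ lies in $\bb Z_{\geq 0}$, so the two conditions $a_E(X,B)<1$ and $a_E(X,B)=0$ coincide. The hypothesis therefore forces every $p$-exceptional divisor to be a log canonical place of $(X,B)$, and the log pullback formula $B_Y = p_*^{-1}B + \sum_E (1-a_E(X,B))E$ yields a reduced divisor, with every prime component of $Y$ (either a strict transform of a component of $B$, or a $p$-exceptional divisor) appearing in $B_Y$ with coefficient one.

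Next, I would use the standard fact that the log canonical places of a toric log CY pair correspond exactly to primitive lattice vectors in the support of the fan $\Sigma_X$ of $X$. Let $\Sigma_Y$ be a simplicial refinement of $\Sigma_X$ whose rays include those corresponding to the $p$-exceptional divisors, and let $Y^{\mathrm{tor}} := X_{\Sigma_Y}$, with its toric birational morphism $\pi^{\mathrm{tor}}\colon Y^{\mathrm{tor}}\to X$. By toric geometry, $(Y^{\mathrm{tor}},B_{Y^{\mathrm{tor}}})$ is a toric log CY pair whose reduced torus-invariant boundary coincides with the log pullback of $B$, and the $\pi^{\mathrm{tor}}$-exceptional divisors match the $p$-exceptional ones.

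Finally, to identify $Y$ with $Y^{\mathrm{tor}}$: both, viewed as birational models of $X$, carry the same set of prime divisors as divisorial valuations of $k(X)$. Hence the induced birational map $\psi\colon Y\dashrightarrow Y^{\mathrm{tor}}$ over $X$ is an isomorphism in codimension one. Since $(Y,B_Y)$ and $(Y^{\mathrm{tor}},B_{Y^{\mathrm{tor}}})$ are crepant log CY with matching boundary divisors, a negativity-lemma argument on a common resolution over $X$ forces $\psi$ to be a morphism in both directions, hence an isomorphism. Consequently $(Y,B_Y)$ is toric log CY, and $p=\pi^{\mathrm{tor}}$ is a toric projective birational morphism.

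I expect the main obstacle to be this last identification step. While $Y$ and $Y^{\mathrm{tor}}$ agree in codimension one essentially by construction, ruling out intervening small modifications between them requires using the crepant log CY structure on both sides in an essential way. In the $\bb Q$-factorial case this amounts to showing that the connecting sequence of flops over $X$ must be trivial; in general one can reduce to this case by passing to a small $\bb Q$-factorialization of each side.
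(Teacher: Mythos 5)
Your first step (index one forces the contracted divisors to be log canonical places) and the observation that the exceptional divisors of \(p\) correspond to primitive vectors in the support of the fan are both correct and match the paper's starting point. But the final identification \(Y \cong Y^{\mathrm{tor}}\) has a genuine gap, and it is exactly the step you flag. Two normal projective varieties that are isomorphic in codimension one and crepant over a common base need \emph{not} be isomorphic: the negativity lemma only tells you that the log pullbacks of \(K_X+B\) agree on a common resolution (both are \(\sim_{\mathbb Q} 0\)), and it gives no leverage to promote \(\psi\) to a morphism --- flops are the standard counterexample. Moreover, your proposed repair ("the connecting sequence of flops over \(X\) must be trivial") rests on a false premise: the flop sequence genuinely need not be trivial. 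Already for \(X\) the projective cone over a smooth quadric surface with its toric boundary, the identity and the two small resolutions are three pairwise non-isomorphic birational contractions to \(X\) satisfying your hypotheses with the \emph{same} set of exceptional divisors (namely, none), so no single fan refinement \(\Sigma_Y\) can be isomorphic to all of them. What you actually need is that \emph{every} projective crepant birational contraction model with the prescribed divisor set is toric, i.e., that small modifications and birational contraction images of projective toric varieties remain toric. That is a real theorem (it uses that toric varieties are Mori dream spaces with polynomial Cox ring, so all their SQMs arise from VGIT and are again toric); it is the paper's Lemma~\ref{lem:toric-contr}, quoted from \cite[Lemma 2.3.2]{BMSZ18}. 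A minor additional quibble: insisting that \(\Sigma_Y\) be \emph{simplicial} may force extra rays and destroy the codimension-one identification; you should instead take the (possibly non-simplicial) refinement obtained by star subdivisions at exactly the prescribed rays.

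For comparison, the paper avoids constructing an explicit fan altogether: it passes to a \(\mathbb Q\)-factorial dlt modification \((Z,B_Z)\to(X,B)\) dominating \(Y\), shows \((Z,B_Z)\) is toric by the complexity computation \(c(Z,B_Z;\Sigma_Z)=c(X,B;\Sigma)=0\) together with Theorem~\ref{thm:BMSZ-toric}, and then descends to \(Y\) by applying Lemma~\ref{lem:toric-contr} to the birational contraction \(Z\dashrightarrow Y\). Either route must at some point invoke the fact that birational contraction images (in particular small modifications) of projective toric log CY pairs are toric; if you add that input at your last step --- applying it to \(Y^{\mathrm{tor}}\dashrightarrow Y\), or directly to a dlt modification dominating \(Y\) --- your argument closes up.
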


\begin{proof}
The pair $(X,B)$ has index one by~\cite[Theorem 8.2.3]{CLS11},
so the assumption on $p$ implies it only contracts log canonical places of $(X,B)$.
In particular, we may find a $\qq$-factorial dlt modification $(Z,B_Z)\rightarrow (X,B)$
such that $Z\dashrightarrow Y$ is a birational contraction.
Let $\Sigma$ (resp. \(\Sigma_Z\)) be the decomposition of $B$ (resp. \(B_Z\)) given by the sum of its prime divisors.
Then $c(X,B;\Sigma)=0$ since \((X,B)\) is toric. Since
\(\rho(\Sigma_Z)=\rho(\Sigma)+\rho(Z/X)\)
and \(|\Sigma_Z|=|\Sigma|+\rho(Z/X)\),
we get 
\(c(Z,B_Z;\Sigma_Z)=0\),
so $(Z,B_Z) = (Z, \lfloor B_Z \rfloor)$ is a toric log CY pair by Theorem~\ref{thm:BMSZ-toric}.
Applying Lemma~\ref{lem:toric-contr} to \(Z \dashrightarrow Y \dashrightarrow X\), we conclude that $(Y,B_Y)$ is a toric log CY pair, and hence that $Y\dashrightarrow X$ is a toric birational contraction.
\end{proof}

\subsection{Families of varieties and pairs}

Next, we will consider relative versions of Definition~\ref{defn:fano-type-log-CY-pair}. A \defi{projective contraction} is a projective morphism \(f\colon X \to T\) of normal quasi-projective varieties such that \(f_* \cal O_X = \cal O_T\). A \defi{fibration} is a projective contraction whose general fiber is positive dimensional.

\begin{definition}
    Let \((X,B = \sum \alpha_i B_i)\) be a pair and \(f\colon X \to T\) a projective contraction. For a closed point \(t\in T\), the \defi{log fiber} is \((X_t, B_t \coloneqq \sum_{B_i \not\supset X_t} \alpha_i B_i|_{X_t})\).
    If \(X_t\) is normal and is not contained in any prime divisor in the support of \(B\), then \((X_t, B_t)\) is a pair.
\end{definition}
A \defi{family of varieties} is a flat projective contraction \(f\colon X \to T\) of normal quasi-projective varieties.
A \defi{family of pairs} is a family of varieties \(X \to T\) and a pair \((X, B)\) such that the log fiber over any closed point is a pair.

\begin{definition}\label{defn:Fano-type-log-CY-morphism}
{\em \hfill
\begin{enumerate}
    \item A \defi{family of Fano varieties} is a family of varieties such that the fiber over every closed point is a Fano variety.
    \item A \defi{Fano fibration} is a projective contraction \(X \to T\) such that \(-K_X\) is ample over \(T\).
    \item A \defi{Fano type morphism} is a projective contraction \(X \to T\) such that there exists a
    \(\bb Q\)-divisor \(B\) on \(X\) with \((X,B)\) a klt pair, \(B\) big over \(Z\), and \(K_X + B \sim_{T, \qq} 0\).
    \item A \defi{family of log CY pairs} is a family of pairs such that the log fiber over any closed point is a log CY pair.
    \item A \defi{log CY fibration} is a projective contraction $X\rightarrow T$ such that there exists a $\qq$-divisor $B$ on $X$ with $(X,B)$ an lc pair and $K_X+B\sim_{T,\qq} 0$.
\end{enumerate}
}
\end{definition}

Note that a family of Fano varieties is not necessarily a Fano fibration, but it becomes a Fano fibration after a finite \'etale base change, since the geometric generic fiber is Fano. On the other hand, a Fano fibration is not necessarily a family of Fano varieties over \(T\), but it is over some dense open subset of the base. The same properties hold for log CY pairs.

\begin{lemma}\label{lem:FT1}
Let $X\rightarrow T$ be a Fano type morphism.
Let \(Y\) be a normal quasi-projective variety and $Y\rightarrow X$ a projective birational morphism
that only contracts non-terminal places of $X$.
Then, the composition $Y\rightarrow T$ is a Fano type morphism.
\end{lemma}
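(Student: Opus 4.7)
The plan is to transport the Fano type structure from $X$ to $Y$ via log pullback and then perturb the resulting nef and big class into an ample one. Using the standard equivalence (the relative version of the equivalence stated in the paragraph after Definition~\ref{defn:fano-type-log-CY-pair}), we may replace the given Fano type structure on $X$ by an effective $\qq$-divisor $B$ with $(X,B)$ klt and $-(K_X+B)$ ample over $T$; we then aim to produce an analogous pair on $Y$.

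Let $\pi\colon Y\to X$ be the given morphism, and let $(Y,B_Y)$ be the log pullback of $(X,B)$, so that $K_Y+B_Y=\pi^*(K_X+B)$ and $B_Y=\pi^{-1}_*B+\sum_E(1-a_E(X,B))E$, where $E$ ranges over the $\pi$-exceptional divisors. The coefficients of the strict transform of $B$ lie in $[0,1)$ since $(X,B)$ is klt. For each exceptional $E$, the hypothesis that $\pi$ only contracts non-terminal places of $X$ gives $a_E(X,0)\le 1$, hence $a_E(X,B)\le a_E(X,0)\le 1$ since $B\ge 0$, so the coefficient $1-a_E(X,B)$ is nonnegative; meanwhile $a_E(X,B)>0$ by kltness of $(X,B)$, so the coefficient is strictly less than $1$. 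Since log discrepancies are preserved under log pullback, $a_F(Y,B_Y)=a_F(X,B)>0$ for every divisor $F$ over $Y$, and combined with the coefficient computation above this shows $(Y,B_Y)$ is klt and $B_Y$ is effective.

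It remains to promote the nef and big $\qq$-Cartier class $-(K_Y+B_Y)=\pi^*(-(K_X+B))$ to an ample one. By the relative Kodaira lemma, write $-(K_Y+B_Y)\sim_{T,\qq} A+D$ with $A$ ample over $T$ and $D$ effective. For $\epsilon>0$ sufficiently small, the pair $(Y,B_Y+\epsilon D)$ is still klt by the openness of the klt condition, and one computes $-(K_Y+B_Y+\epsilon D)=(1-\epsilon)(-(K_Y+B_Y))+\epsilon A$, a positive combination of a nef and an ample $\qq$-divisor over $T$, hence ample over $T$. Applying the equivalence used at the start in the reverse direction to $(Y,B_Y+\epsilon D)$, we conclude that $Y\to T$ is a Fano type morphism.

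The only step where the hypothesis on $\pi$ is used is the verification that the log pullback $B_Y$ is effective; the remaining Kodaira perturbation is routine but essential to convert the nef and big anticanonical class on $Y$ into the ample one required by the Fano type condition. In particular, no run of the MMP is needed.
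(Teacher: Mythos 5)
Your proof is correct and follows essentially the same route as the paper's: take the klt boundary witnessing the Fano type condition, form its log pullback, and observe that the hypothesis on the contracted places (together with $B\ge 0$) makes the pullback effective and klt. The only difference is cosmetic: the paper works with the big-and-semiample formulation of Fano type, under which $-(K_Y+B_Y)=\pi^*(-(K_X+B))$ is already big and semiample over $T$ and no further work is needed, whereas your choice of the ample characterization forces the (correct, but avoidable) Kodaira perturbation at the end.
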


\begin{proof}
Let $\Delta$ be a boundary on $X$ such that
$(X,\Delta)$ is klt and $-(K_X+\Delta)$ is big and semiample over $T$.
The log pullback $\Delta_Y$ of $\Delta$ is effective
as $Y\rightarrow X$ only
contracts non-terminal places of $(X,\Delta)$,
so \((Y,\Delta_Y)\) is a klt pair.
Since $-(K_Y+\Delta_Y)$ is big and semiample over $T$,
the morphism $Y\rightarrow T$ is of Fano type.
\end{proof}

The following lemma is well known (see, e.g.,~\cite[Lemma 2.17(3)]{Mor24a}).

\begin{lemma}\label{lem:FT2}
Let $X\rightarrow T$ be a Fano type morphism, and
let $(X,B)$ be a pair that is log CY over $T$, i.e., \(K_X + B \sim_{T, \bb Q} 0\).
Let \(Y\) be a normal quasi-projective variety and $Y\rightarrow X$ a projective birational morphism
that only contracts non-canonical places of $(X,B)$.
Then, the composition $Y\rightarrow T$ is a morphism of Fano type.
\end{lemma}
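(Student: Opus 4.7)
The plan is to produce a klt boundary on $Y$ witnessing the Fano type structure, obtained by convex combination of the Fano type boundary on $X$ with the log CY divisor $B$. First, using that $X\to T$ is of Fano type, I would choose an effective $\qq$-divisor $\Gamma$ on $X$ such that $(X,\Gamma)$ is klt and $-(K_X+\Gamma)$ is big and semiample over $T$. For a parameter $\epsilon\in(0,1]$ to be specified, I set
\[
\Gamma_\epsilon \coloneqq \epsilon\,\Gamma + (1-\epsilon)\,B.
\]
Since $K_X+B\sim_{T,\qq}0$, one has $-(K_X+\Gamma_\epsilon)\sim_{T,\qq}\epsilon\bigl(-(K_X+\Gamma)\bigr)$, so $-(K_X+\Gamma_\epsilon)$ is big and semiample over $T$. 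Moreover, log discrepancies are linear in the boundary, so $a_E(X,\Gamma_\epsilon)=\epsilon\,a_E(X,\Gamma)+(1-\epsilon)\,a_E(X,B)>0$ for every divisor $E$ over $X$, and the pair $(X,\Gamma_\epsilon)$ is klt.

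Next I would pull $\Gamma_\epsilon$ back along the birational morphism $f\colon Y\to X$ to form the log pullback $\Gamma_{\epsilon,Y}$, characterized by $K_Y+\Gamma_{\epsilon,Y}=f^*(K_X+\Gamma_\epsilon)$. Pullback along a birational morphism preserves both bigness and semiampleness, so $-(K_Y+\Gamma_{\epsilon,Y})$ remains big and semiample over $T$. Since log discrepancies over $Y$ agree with those of $(X,\Gamma_\epsilon)$, the klt condition transfers to $(Y,\Gamma_{\epsilon,Y})$ as soon as $\Gamma_{\epsilon,Y}$ is known to be effective.

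The step I expect to be the main obstacle is precisely this effectivity of $\Gamma_{\epsilon,Y}$, and it is where the non-canonical hypothesis enters. On each non-exceptional prime divisor of $Y$, the coefficient of $\Gamma_{\epsilon,Y}$ is a convex combination of the corresponding coefficients in $\Gamma$ and $B$, hence non-negative. On an exceptional divisor $E$ of $f$, the coefficient equals
\[
(1-\epsilon)\bigl(1-a_E(X,B)\bigr) + \epsilon\bigl(1-a_E(X,\Gamma)\bigr);
\]
the first term is strictly positive by the assumption that $E$ is a non-canonical place of $(X,B)$, while the second may be negative if $E$ happens to be terminal for $(X,\Gamma)$. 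As $Y$ has only finitely many prime divisors, a single sufficiently small $\epsilon>0$ makes every exceptional coefficient strictly positive. With such $\epsilon$, the pair $(Y,\Gamma_{\epsilon,Y})$ is klt with $-(K_Y+\Gamma_{\epsilon,Y})$ big and semiample over $T$, showing that $Y\to T$ is of Fano type.
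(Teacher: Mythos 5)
Your argument is correct and is the standard convex-combination proof that the paper itself omits (it cites \cite[Lemma 2.17(3)]{Mor24a} without proof), and it matches the technique the paper uses in the neighboring results (e.g.\ Lemmas~\ref{lem:FT1} and~\ref{lem:div-combination}): the non-canonical hypothesis gives strict positivity of the $B$-contribution to each exceptional coefficient, and a single small $\epsilon$ works because $Y\to X$ has only finitely many exceptional prime divisors (as you intend, rather than ``finitely many prime divisors of $Y$''). The only implicit point worth flagging is that klt-ness of $(X,\epsilon\Gamma+(1-\epsilon)B)$ uses $a_E(X,B)\ge 0$, i.e.\ that ``log CY over $T$'' includes the lc hypothesis, which is indeed the paper's convention.
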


For a rational map \(\pi\colon X \dashrightarrow Z\) of normal projective varieties, we define the pullback of divisors as follows. Let \(\widetilde{X}\) be a normal variety that resolves \(\pi\), with projections \(p_1\colon\widetilde{X} \to X\) and \(p_2\colon\widetilde{X} \to Z\). Let \(D_Z\) be a \(\bb Q\)-Cartier divisor on \(Z\), and assume \(m D_Z\) is Cartier. Then we define the \(\bb Q\)-divisor \[\pi^* D_Z \coloneqq \frac{1}{m} {p_1}_* p_2^* (mD_Z),\]
where \(p_2^* (mD_Z)\) is the Weil divisor associated to the Cartier divisor corresponding to \(m D_Z\) under the isomorphism 
\[
H^0(Z, \cal O_Z(mD_Z)) \xrightarrow{\cong} H^0(Z, {p_2}_* p_2^*\cal O_Z(mD_Z)) = H^0(\widetilde{X}, p_2^*\cal O_Z(mD_Z)).
\]

If \(D_X\) is a prime divisor on \(X\), we define its pushforward \(\pi_* D_X\) to be its strict transform on \(Z\). We extend \(\pi_*\) linearly to Weil divisors on \(X\). If \(\pi\) is a birational contraction, then the equality \(\pi_* \pi^* D_Z = D_Z\) holds as \(\bb Q\)-divisors on \(Z\).
Furthermore, if \(\pi\) is a birational contraction, then \(\pi_*\) induces a homomorphism \(\Cl X \to \Cl Z\).

We now introduce some notions of class groups in families.

\begin{definition}\label{defn:class-group-restrictions}
{\em 
Let $f\colon X\rightarrow T$ be a fibration of \(\bb Q\)-factorial normal varieties.
We say that $f$ has \defi{surjective class group restrictions} (resp. \defi{injective class group restrictions}, \defi{isomorphic class group restrictions})
if the restriction homomorphism
\[
{\rm Cl}_{\bb Q}(X/T) \coloneqq (\Cl X / f^* \Pic T) \otimes_{\bb Z} \bb Q \rightarrow {\rm Cl}_{\bb Q}(X_t) \coloneqq \Cl (X_t) \otimes_{\bb Z} \bb Q
\]
is surjective (resp. injective, an isomorphism) for every closed point $t\in T$.
}
\end{definition}

The property of injective class group restrictions descends under birational contractions:

\begin{lemma}\label{lem:injective-class-groups}
Let \(\phi\colon X \to T\) and \(\psi\colon Z \to T\) be fibrations of \(\bb Q\)-factorial varieties
such that the fibers $X_t$ and $Z_t$ are normal for every \(t \in T\).
Assume there is a birational contraction
\(\pi\colon X\dashrightarrow Z\) over \(T\).
If \(\phi\) has injective class group restrictions,
then $\psi$ also
has injective class group restrictions.
\end{lemma}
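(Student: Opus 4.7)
The plan is to transport the vanishing condition from $Z$ to $X$ via the birational contraction $\pi$ and then apply the hypothesis on $\phi$. First, I would fix a common resolution $p_1\colon W\to X$, $p_2\colon W\to Z$ of $\pi$, and define a pullback $\pi^{\ast}\colon \Cl_{\bb Q}(Z/T)\to\Cl_{\bb Q}(X/T)$ on a $\bb Q$-Cartier divisor $D$ by $\pi^{\ast}D := (p_1)_{\ast} p_2^{\ast}D$, as in the paper's construction of pullback for rational maps. Since $\psi\circ p_2=\phi\circ p_1$, one has $\pi^{\ast}\psi^{\ast}L \sim \phi^{\ast}L$ for every $L\in\Pic T$, so $\pi^{\ast}$ will descend to a well-defined homomorphism of relative class groups. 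The identity $\pi_{\ast}\pi^{\ast}=\mathrm{id}$ recorded in the excerpt makes $\pi^{\ast}$ injective and $\pi_{\ast}$ surjective.

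Given a class $[D]\in\Cl_{\bb Q}(Z/T)$ with $[D|_{Z_t}]=0$ in $\Cl_{\bb Q}(Z_t)$, the core claim I would prove is that $\pi^{\ast}[D]$ restricts to $0$ in $\Cl_{\bb Q}(X_t)$. Once this is established, the injective class group restriction hypothesis on $\phi$ forces $\pi^{\ast}[D] = 0$ in $\Cl_{\bb Q}(X/T)$, and applying $\pi_{\ast}$ yields $[D]=\pi_{\ast}\pi^{\ast}[D]=0$ in $\Cl_{\bb Q}(Z/T)$, as desired. To prove the core claim, I would rescale so that $D$ is an integral Cartier divisor on $Z$ whose support does not contain $Z_t$ and satisfies $D|_{Z_t} = \mathrm{div}_{Z_t}(g)$ for some $g\in k(Z_t)^{\ast}$. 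Since the local ring $\cal O_{Z,\eta_{Z_t}}$ has residue field $k(Z_t)$, one can lift $g$ to a rational function $\tilde g\in k(Z)$ and replace $D$ with the linearly equivalent $D-\mathrm{div}_Z(\tilde g)$, after which $D|_{Z_t}=0$ as a Weil divisor. Then $(p_2^{\ast}D)|_{W_t} = (p_2|_{W_t})^{\ast}(D|_{Z_t}) = 0$, and the compatibility of push-forward along $p_1$ with restriction to the fiber should give $(\pi^{\ast}D)|_{X_t}\sim_{\bb Q} (p_1|_{W_t})_{\ast}((p_2^{\ast}D)|_{W_t}) = 0$ in $\Cl_{\bb Q}(X_t)$.

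The hardest step will be the rigorous verification of this commutation of push-forward along $p_1$ with restriction to the fiber $X_t$, modulo $\phi^{\ast}\Pic T$. I would handle this by a moving argument that places the supports of the relevant divisors in general position with respect to $W_t$ and $X_t$, combined with the fact that $p_1$ restricts to a birational map between dense opens of $W_t$ and $X_t$ (using that $p_1$ is an isomorphism over a big open subset of $X$ that meets every fiber $X_t$ densely, by $\bb Q$-factoriality and normality of fibers). The hypotheses that $X$ and $Z$ are $\bb Q$-factorial and have normal fibers are used precisely to make these moving and restriction operations well-defined at the level of $\Cl_{\bb Q}$.
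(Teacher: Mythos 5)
Your proposal follows the paper's proof exactly: pull \(D\) back to \(X\) via the resolution-defined \(\pi^*\), observe that \(\pi^*D|_{X_t}\sim_{\qq}0\), apply the injectivity hypothesis on \(\phi\) to conclude \(\pi^*D\sim_{\qq,T}0\), and push forward using \(\pi_*\pi^*=\mathrm{id}\) for the birational contraction \(\pi\). The ``core claim'' you single out as the hardest step is precisely the one assertion the paper makes without further justification, so your extra detail only elaborates on the published argument rather than diverging from it.
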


\begin{proof}
Let $D$ be a \(\bb Q\)-divisor on $Z$
for which $D|_{Z_t}\sim_\qq 0$ for some $t\in T$.
The pullback $\pi^*D$ to $X$ satisfies
$\pi^*D|_{X_t}\sim_\qq 0$,
so $\pi^*D\sim_{\qq,T} 0$
because $X\rightarrow T$ has injective class group restrictions.
Since \(\pi\) is a birational contraction, we conclude
$D = \pi_*\pi^*D \sim_{\qq,T} 0$.
\end{proof}

The following properties of class groups of fibers of Fano type morphisms will be useful.

\begin{lemma}\label{lem:fano-type-num-triv-nbhd}
    Let \(f\colon X \to T\) be a Fano type morphism of \(\bb Q\)-factorial varieties.
    \begin{enumerate}
        \item\label{item:fano-type-num-triv-nbhd-part-1} Let \(t_0\in T\) be a closed point in the smooth locus of \(T\). There is a smooth open neighborhood \(t_0\in U\subset T\) such that for any divisor \(D\) on \(X\) over \(T\), if \(D|_{X_{t_0}} \equiv 0\), then \(D\) is \(\bb Q\)-linearly trivial over an open neighborhood of \(t_0\).\footnote{That is, there exists an open neighborhood \(U \ni t_0\) such that \(D|_{X_U} \sim_{\bb Q} 0\) in \(\Cl(X_U/U) \coloneqq \Cl X_U / f|_{X_U}^*\Pic U \).}
        \item\label{item:fano-type-num-triv-nbhd-part-2} There exists a nonempty open subset \(V \subset T\) such that
        \(f|_{X_V}\colon X_V \to V\) has injective class group restrictions.
        \item\label{item:fano-type-num-triv-nbhd-part-3} If $f$ has surjective class group restrictions, then \(f\) has isomorphic class group restrictions
        over a nonempty open subset of $T$.
    \end{enumerate}
\end{lemma}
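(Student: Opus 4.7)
For part~(\ref{item:fano-type-num-triv-nbhd-part-1}), my plan is to combine two ingredients: (a) numerical equivalence coincides with $\qq$-linear equivalence on the fiber $X_{t_0}$, and (b) a cohomology and base change spreading argument. Let $\Delta$ realize $X\rightarrow T$ as Fano type, so that $(X,\Delta)$ is klt and $-(K_X+\Delta)$ is ample over $T$. Relative Kawamata--Viehweg vanishing applied to $(X,\Delta)$ yields $R^if_*\cal O_X=0$ for all $i>0$. Because $t_0$ lies in the smooth locus of $T$, an iterated inversion-of-adjunction argument (cutting by a regular sequence of fibers of $f$ through $t_0$) shows that $(X_{t_0},\Delta_{t_0})$ remains klt, and Kawamata--Viehweg on the fiber then yields $h^1(\cal O_{X_{t_0}})=0$. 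Consequently $\Pic(X_{t_0})_\qq = N^1(X_{t_0})_\qq$, so $D|_{X_{t_0}}\equiv 0$ forces $mD|_{X_{t_0}}\sim 0$ for some positive integer $m$. As $X$ is $\qq$-factorial, $\cal L\coloneqq\cal O_X(mD)$ is a line bundle. Cohomology and base change, using $h^0(\cal L|_{X_{t_0}})=1$ and $h^1(\cal L|_{X_{t_0}})=0$, shows that $f_*\cal L$ is locally free of rank one near $t_0$ and the adjunction $f^*f_*\cal L\to\cal L$ is an isomorphism on $X_{t_0}$, hence in a neighborhood by Nakayama. This yields an open $U\ni t_0$ with $\cal L|_{X_U}\cong f|_{X_U}^*M$ for some $M\in\Pic(U)$, giving $D\sim_{\qq,U}0$.

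For part~(\ref{item:fano-type-num-triv-nbhd-part-2}), the plan is to shrink $T$ to a smooth open $V$ over which $f$ is flat, equidimensional, and such that the fiber over every codimension-$1$ point of $V$ is irreducible. This is possible because the geometric generic fiber of a Fano type morphism is irreducible, so the locus of reducible fibers is a proper closed subscheme whose codimension-$1$ components can be deleted from the base. Under these conditions, every vertical prime divisor of $X_V$ is a $\qq$-multiple of the pullback of a prime divisor on $V$, so the restriction to the generic fiber $\Cl_\qq(X_V/V)\hookrightarrow\Cl_\qq(X_\eta)$ is injective. Given $[D]\in\Cl_\qq(X_V/V)$ with $D|_{X_t}\sim_\qq 0$ for some closed $t\in V$, part~(\ref{item:fano-type-num-triv-nbhd-part-1}) provides an open neighborhood $U_t\subset V$ of $t$ over which $[D]=0$ in $\Cl_\qq(X_{U_t}/U_t)$. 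Since both $\Cl_\qq(X_V/V)$ and $\Cl_\qq(X_{U_t}/U_t)$ inject compatibly into $\Cl_\qq(X_\eta)$, the restriction $\Cl_\qq(X_V/V)\to\Cl_\qq(X_{U_t}/U_t)$ is injective, forcing $[D]=0$ in $\Cl_\qq(X_V/V)$.

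Part~(\ref{item:fano-type-num-triv-nbhd-part-3}) then follows formally. Extending divisors by closure yields a surjection $\Cl_\qq(X/T)\twoheadrightarrow\Cl_\qq(X_V/V)$ for $V$ as in part~(\ref{item:fano-type-num-triv-nbhd-part-2}), and for every $t\in V$ the composition $\Cl_\qq(X/T)\twoheadrightarrow\Cl_\qq(X_V/V)\to\Cl_\qq(X_t)$ agrees with the restriction, which is surjective by hypothesis. Hence $\Cl_\qq(X_V/V)\to\Cl_\qq(X_t)$ is surjective; combined with the injectivity from part~(\ref{item:fano-type-num-triv-nbhd-part-2}), this produces the desired isomorphism over $V$.

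The main obstacle I anticipate is in part~(\ref{item:fano-type-num-triv-nbhd-part-1}): upgrading $R^1f_*\cal O_X=0$ to the fiberwise vanishing $h^1(\cal O_{X_{t_0}})=0$ at an arbitrary smooth point $t_0\in T$, which hinges on confirming that $(X_{t_0},\Delta_{t_0})$ is genuinely klt. The iterated inversion-of-adjunction cutting by a regular sequence through $t_0$ requires that at each step the intermediate slices remain normal and klt, which is delicate when $t_0$ is not generic. If this direct approach fails, a fallback is to shrink the neighborhood $U$ first so that the base-change maps for $R^1f_*\cal O_X$ are surjective there, then combine with the total-space vanishing to conclude $h^1(\cal O_{X_{t_0}})=0$ via semicontinuity.
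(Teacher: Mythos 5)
Your part~(\ref{item:fano-type-num-triv-nbhd-part-1}) has a genuine gap, and since parts~(\ref{item:fano-type-num-triv-nbhd-part-2}) and~(\ref{item:fano-type-num-triv-nbhd-part-3}) are built on it, the whole argument rests on it. Your route needs two things the hypotheses do not provide: that the fiber \(X_{t_0}\) carries a klt pair structure (so that fiberwise Kawamata--Viehweg gives \(h^1(\cal O_{X_{t_0}})=0\) and numerical triviality upgrades to \(\bb Q\)-linear triviality on the fiber), and that \(f\) is flat (so that cohomology and base change applies to \(\cal O_X(mD)\)). Neither holds at an arbitrary closed point \(t_0\) of the smooth locus of \(T\): a Fano type morphism is only a projective contraction, and its special fibers can be non-klt, non-normal, or of excess dimension — indeed the paper later feeds this lemma compositions with dlt modifications, which are not flat. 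Your proposed cut-by-fibers inversion-of-adjunction argument produces klt slices only at \emph{general} points of \(T\); this is precisely why Lemmas~\ref{lem:nice-fibers-klt-CY-fibration} and~\ref{lem:nice-fibers-lc-CY-fibration} must restrict to the complement of the discriminant of the canonical bundle formula. The fallback does not rescue this: you cannot shrink a neighborhood of \(t_0\) away from \(t_0\) itself, and without flatness the vanishing \(R^1f_*\cal O_X=0\) says nothing about \(h^1(\cal O_{X_{t_0}})\). The paper avoids all of this with the relative MMP: running a \(D\)-MMP over \(T\) (which terminates with a good minimal model by BCHM since \(f\) is Fano type), the hypothesis \(D|_{X_{t_0}}\equiv 0\) forces every \(D\)-negative contraction to be disjoint from \(X_{t_0}\), so the MMP is an isomorphism over a neighborhood of \(t_0\); semiampleness of the resulting \(D\) together with its numerical triviality on \(X_{t_0}\) then gives \(D\sim_{\bb Q}0\) over a neighborhood. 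Moreover, the statement (see the footnote, and the way part~(\ref{item:fano-type-num-triv-nbhd-part-2}) is proved in the paper) asks for a single \(U\) working for \emph{all} \(D\); this requires the finite generation of \(\Cl_{\bb Q}(X/T)\) for Fano type morphisms, which you never invoke.

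Granting part~(\ref{item:fano-type-num-triv-nbhd-part-1}), your parts~(\ref{item:fano-type-num-triv-nbhd-part-2}) and~(\ref{item:fano-type-num-triv-nbhd-part-3}) take a genuinely different and rather clean route: instead of the paper's descending chain of kernels inside the finite-dimensional space \(\Cl_{\bb Q}(X/T)\), you arrange that vertical divisors over \(V\) are \(\bb Q\)-pullbacks, so \(\Cl_{\bb Q}(X_V/V)\hookrightarrow\Cl_{\bb Q}(X_\eta)\), and then transport triviality through the generic fiber; this establishes injectivity at every \(t\in V\) in one stroke and only uses the per-divisor version of~(\ref{item:fano-type-num-triv-nbhd-part-1}). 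The one point to be careful about is prime divisors of \(X_V\) contracted to codimension \(\geq 2\) in \(V\): you should delete the entire fiber-dimension-jumping locus (not just its codimension-one part) to guarantee equidimensionality, after which the argument for the injection into \(\Cl_{\bb Q}(X_\eta)\) goes through.
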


\begin{proof}
    First, we note that for any divisor \(D\) on \(X\) over \(T\) that is numerically trivial on \(X_{t_0}\), there is an open neighborhood \(U_D\ni t_0\) over which \(D|_{X_{U_D}} \equiv 0\). Indeed, over any affine open subset of \(T\), we may run a \(D\)-MMP for \(X\) that terminates with a good minimal model by \cite[Corollary 1.3.2]{BCHM}, and the assumption that \(D\) is numerically trivial on \(X_{t_0}\) implies that this MMP is trivial on an open neighborhood of \(t_0\). Since \(f\) is of Fano type and \(D\) is numerically trivial over this open neighborhood, it is \(\bb Q\)-linearly trivial over this open neighborhood.
    
    For~\eqref{item:fano-type-num-triv-nbhd-part-1}, since \(X\to T\) is Fano type, \(\Cl_{\bb Q}(X/T) \cong \Cl_{\bb Q}(X/T)/\equiv_{\bb Q, T}\) is finitely generated (see \cite[Theorem 3.18 and Definition 3.13]{BraunMoraga24}).
    Thus, after replacing \(T\) by an open neighborhood of \(t_0\) finitely many times, the conclusion holds.

    For~\eqref{item:fano-type-num-triv-nbhd-part-2}, let \(t_0\in T\), and let \(t_0 \in U_0\) be the open neighborhood from~\eqref{item:fano-type-num-triv-nbhd-part-1}. Then \(\ker(\Cl_{\bb Q}(X/T) \to \Cl_{\bb Q}(X_{t})) \subseteq \ker(\Cl_{\bb Q}(X/T) \to \Cl_{\bb Q}(X_{t_0}))\) for any \(t\in U_0\). If the containment is strict for some \(t_1\in U_0\), apply~\eqref{item:fano-type-num-triv-nbhd-part-1} to find an open neighborhood \(t_1 \in U_1 \subset U_0\). Repeating this process, we obtain a sequence \(\ker(\Cl_{\bb Q}(X/T) \to \Cl_{\bb Q}(X_{t_{i+1}})) \subseteq \ker(\Cl_{\bb Q}(X/T) \to \Cl_{\bb Q}(X_{t_i}))\) of subspaces of the finite-dimensional \(\bb Q\)-vector space \(\Cl_{\bb Q}(X/T)\). This must stabilize after a finite number of steps, so after shrinking \(T\) a finite number of times, we obtain the open subset \(V\) in~\eqref{item:fano-type-num-triv-nbhd-part-2}.
    Finally, \eqref{item:fano-type-num-triv-nbhd-part-3} is immediate from~\eqref{item:fano-type-num-triv-nbhd-part-2}.
\end{proof}

\subsection{Properties of cluster type log Calabi--Yau pairs and dlt modifications}

Now we focus on log CY pairs of cluster type and prove several results that we will use in the proof of Theorem~\ref{thm:cluster-Fano}.
The next lemma is one of the reasons for the terminality assumption the theorem.

\begin{lemma}\label{lem:div-combination}
Let $(X,B)$ be a pair of index one,
and let $(X,\Delta)$ be a terminal pair.
For every $\epsilon \in (0,1)$,
the exceptional non-terminal places of $(X,(1-\epsilon)B+\epsilon \Delta)$
are log canonical places of $(X,B)$.
\end{lemma}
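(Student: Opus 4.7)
The plan is a direct computation exploiting the linearity of log discrepancy in the boundary divisor, combined with the integrality of log discrepancies forced by the index-one hypothesis on $(X,B)$. Fix an exceptional divisor $E$ over $X$ and a projective birational model $\pi\colon Y \to X$ from a normal variety $Y$ on which $E$ appears as a prime divisor. Pulling back the identity $K_X + (1-\epsilon)B + \epsilon\Delta = (1-\epsilon)(K_X+B) + \epsilon(K_X+\Delta)$ and reading off the coefficient at $E$ yields the linearity relation
\[
a_E\bigl(X,(1-\epsilon)B + \epsilon\Delta\bigr) = (1-\epsilon)\, a_E(X,B) + \epsilon\, a_E(X,\Delta).
\]

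Next, I would invoke the two hypotheses on the summands. The assumption that $E$ is non-terminal for the convex combination says the left-hand side is at most $1$, while terminality of $(X,\Delta)$ together with $E$ being $\pi$-exceptional gives $a_E(X,\Delta) > 1$. Substituting into the linearity relation and using $\epsilon \in (0,1)$ yields the strict inequality $a_E(X,B) < 1$.

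The last step is where the index-one hypothesis enters decisively. Since $K_X+B$ is Cartier, its pullback to any normal birational model $Y$ is an integral Weil divisor, so $\coeff_E(B_Y) \in \zz$ and hence $a_E(X,B) = 1 - \coeff_E(B_Y) \in \zz$. Combined with $a_E(X,B) < 1$, this forces $a_E(X,B) \leq 0$; together with the (implicit) log canonical assumption on $(X,B)$, which is needed to make sense of ``log canonical place'' in the conclusion, one gets $a_E(X,B) \geq 0$ and therefore $a_E(X,B) = 0$. Thus $E$ is a log canonical place of $(X,B)$, as required. There is no serious obstacle here; the entire content is in observing how the strict inequality $a_E(X,\Delta) > 1$ provided by terminality combines with the integrality of log discrepancies coming from index one to promote the inequality $a_E(X,B) < 1$ to the equality $a_E(X,B) = 0$.
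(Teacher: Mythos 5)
Your proof is correct and follows essentially the same route as the paper's: linearity of the log discrepancy in the boundary, the strict inequality $a_E(X,\Delta)>1$ from terminality of $(X,\Delta)$ together with $E$ being exceptional to deduce $a_E(X,B)<1$, and then the integrality of $a_E(X,B)$ forced by index one (plus the implicit log canonical hypothesis) to conclude $a_E(X,B)=0$. The only difference is that you spell out the integrality step and the implicit lc assumption more explicitly than the paper does.
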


\begin{proof}
If $E$ is an exceptional non-terminal place
of $(X,(1-\epsilon)B+\epsilon \Delta)$, then
\[
a_E(X,(1-\epsilon)B+\epsilon \Delta) = 
(1-\epsilon)a_E(X,B) + \epsilon a_E(X,\Delta) \leq 1.
\]
As $a_E(X,\Delta)>1$, we have $\epsilon a_E(X,\Delta)>\epsilon$
and so $(1-\epsilon)a_E(X,B)< 1-\epsilon$.
Thus, $a_E(X,B)<1$.
Since $(X,B)$ has index one, we get $a_E(X,B)=0$,
so $E$ is a log canonical place of $(X,B)$.
\end{proof}

Next, we show that a dlt modification of a cluster type log CY pair is also of cluster type.

\begin{lemma}\label{lem:cluster-type-extraction}
Let $(X,B)$ be a log CY pair of cluster type.
Let $\phi\colon Y \dashrightarrow X$ be a birational map
that only extracts log canonical places of $(X,B)$,
and only contracts non-terminal places of $(X,B)$.
Let $(Y,B_Y)$ be the log pullback of $(X,B)$ to $Y$.
Then $(Y,B_Y)$
is a log CY pair of cluster type.
\end{lemma}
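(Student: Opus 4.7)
The plan is to exhibit a crepant birational map $\tilde\psi\colon(\pp^n,\Sigma^n)\dashrightarrow(Y,B_Y)$ witnessing that $(Y,B_Y)$ is of cluster type, taken as $\tilde\psi:=\phi^{-1}\circ\psi$ where $\psi\colon(\pp^n,\Sigma^n)\dashrightarrow(X,B)$ witnesses the cluster-type structure on $(X,B)$. Before constructing $\tilde\psi$, I would verify that $(Y,B_Y)$ is a log Calabi--Yau pair. By Lemma~\ref{lem:index-one}, cluster-type implies $K_X+B\sim 0$ and $B$ is reduced; pulling $K_X+B\sim 0$ back via a common resolution of $\phi$ and pushing forward to $Y$ gives $K_Y+B_Y\sim 0$, so $K_Y+B_Y$ is $\qq$-Cartier. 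For a prime divisor $E\subset Y$, its coefficient in $B_Y$ is either inherited from $B$ (hence in $[0,1]$) when $E$ is not $\phi$-contracted, or is $1-a_E(X,B)$ when $E$ is $\phi$-contracted; the latter is nonnegative by the hypothesis that $\phi$ only contracts non-terminal places. Hence $B_Y$ is effective, and log canonicity follows from crepant equivalence with the lc pair $(X,B)$.

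The map $\tilde\psi$ is crepant because $\psi$ is by assumption and $\phi^{-1}\colon(X,B)\dashrightarrow(Y,B_Y)$ is by construction of the log pullback. It remains to verify that $\mathbb{G}_m^n\cap\mathrm{Ex}(\tilde\psi)$ contains no divisor on $Y$. I would reduce this to showing that $\phi$ is an isomorphism over the open torus $T\coloneqq\psi(\mathbb{G}_m^n)\subset X$, since then $\tilde\psi|_{\mathbb{G}_m^n}$ is an open immersion into $Y$. Two types of obstructions need to be ruled out. First, since $\psi$ is crepant and restricts to an isomorphism $\mathbb{G}_m^n\xrightarrow{\sim}T$, the pair $(X,B)$ restricts to $(T,0)$, so no component of $B$ meets $T$; by hypothesis every prime divisor on $X$ extracted by $\phi$ is a log canonical place of $(X,B)$ and hence (using reducedness of $B$) a component of $B$, so no extracted divisor meets $T$. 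Second, if a $\phi$-contracted prime divisor $E\subset Y$ had center meeting $T$, then smoothness of $T$ and triviality of $B|_T$ would give $a_E(X,B)=a_E(T,0)>1$, contradicting that $E$ is a non-terminal place of $(X,B)$.

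The delicate point is how the two hypotheses on $\phi$ interlock: ``extracts only log canonical places'' confines every divisor that $\phi$ creates on $X$ to the boundary $B$, while ``contracts only non-terminal places'' keeps every divisor that $\phi$ destroys on $Y$ away from the (terminal, smooth) torus. Lemma~\ref{lem:index-one} is what lets us identify log canonical places on $X$ with components of $B$ and thereby tie the first hypothesis to the cluster-type torus.
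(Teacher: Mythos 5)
Your proposal is correct and follows essentially the same route as the paper: effectivity of $B_Y$ comes from the hypothesis that $\phi$ contracts only non-terminal places, and the cluster-type structure on $(Y,B_Y)$ is obtained by composing $\phi^{-1}$ with the given crepant map from $(\pp^n,\Sigma^n)$, using that every divisor extracted by $\phi$ is a log canonical place and hence lies in $\operatorname{supp}(B)$, away from the torus. The extra step about $\phi$-contracted divisors is harmless but not needed, since such divisors have centers of codimension at least two on $X$ and so contribute nothing divisorial to the exceptional locus of the composition.
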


\begin{proof}
First, the assumption that \(\phi\) only contracts non-terminal places implies that \(B_Y\) is effective, so \((Y, B_Y)\) is a pair.
Let $\pi\colon (\pp^n,\Sigma^n)\dashrightarrow (X,B)$ be a crepant birational map
such that $\mathbb{G}_m^n \cap {\rm Ex}(\pi)$ has codimension at least two on the torus.
By the assumption that \(\phi\) only extracts log canonical places, the divisorial part of the  exceptional locus of $\phi^{-1}\colon X\dashrightarrow Y$
is contained in \({\rm supp}(B)\).
We conclude that 
$\phi^{-1} \circ \pi \colon (\pp^n,\Sigma^n)\dashrightarrow (Y,B_Y)$ is a crepant birational map for which 
$\mathbb{G}_m^n \cap {\rm Ex}(\phi^{-1}\circ \pi)$ has codimension at least two in the torus.
So $(Y,B_Y)$ is of cluster type.
\end{proof}

The property of being a cluster type log CY pair also descends under dlt modifications.

\begin{lemma}\label{lem:cluster-type-descend-dlt}
Let $(Y,B_Y)$ be a log CY pair
of cluster type.
If $\phi\colon Y\dashrightarrow X$ is a birational contraction that only contracts 
log canonical places of $(Y,B_Y)$,
then $(X,\phi_* B_Y)$ is a log CY pair of cluster type.
\end{lemma}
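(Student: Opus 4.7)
The plan is to compose a cluster type witness for $(Y,B_Y)$ with $\phi$ to obtain one for $(X,\phi_*B_Y)$. Since cluster type implies log rational, Lemma~\ref{lem:index-one} gives that $B_Y$ is a reduced Weil divisor and $K_Y+B_Y\sim 0$. I would fix a crepant birational map $\pi\colon (\pp^n,\Sigma^n)\dashrightarrow (Y,B_Y)$ with $\mathbb{G}_m^n\cap {\rm Ex}(\pi)$ of codimension at least two in the torus, and take $\psi\coloneqq \phi\circ\pi$ as the candidate crepant birational map exhibiting $(X,\phi_*B_Y)$ as cluster type.

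First, I would check that $(X,\phi_*B_Y)$ is log Calabi--Yau and that $\phi$ itself defines a crepant birational map $(Y,B_Y)\dashrightarrow (X,\phi_*B_Y)$, so that $\psi$ inherits crepancy. Pushing forward $K_Y+B_Y\sim 0$ under the birational contraction $\phi$ yields $K_X+\phi_*B_Y\sim 0$, witnessed by the same rational function. Consequently, on any common resolution of $\phi$, the two log pullbacks of $(Y,B_Y)$ and $(X,\phi_*B_Y)$ coincide; since $(Y,B_Y)$ is lc, the coefficients of this common divisor are at most $1$, and $(X,\phi_*B_Y)$ is lc and $\phi$-crepant.

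The main step is to show that $\mathbb{G}_m^n\cap {\rm Ex}(\psi)$ contains no divisors. Let $E\subset \mathbb{G}_m^n$ be a prime divisor. By the hypothesis on $\pi$, the map $\pi$ does not contract $E$, so its strict transform $E_Y\subset Y$ is a prime divisor. Since $E\not\subset \Sigma^n$ and $\pi$ is crepant,
\[
a_{E_Y}(Y,B_Y)=a_E(\pp^n,\Sigma^n)=1,
\]
so $E_Y$ is not a component of $B_Y$. Reducedness of $B_Y$ implies that the divisorial log canonical places of $(Y,B_Y)$ are exactly the components of $B_Y$; hence, by the hypothesis on $\phi$, the map $\phi$ does not contract $E_Y$, and $\psi$ does not contract $E$.

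The crux of the argument, and the only step requiring care, is this last interplay between the crepancy of $\pi$ and the reducedness of $B_Y$. It ensures that any new exceptional divisor introduced by composing $\pi$ with $\phi$ must correspond to a divisorial log canonical place of $(Y,B_Y)$, hence to a component of $B_Y$, and therefore cannot lie inside the open torus $\mathbb{G}_m^n$.
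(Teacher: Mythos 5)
Your proof is correct and takes essentially the same route as the paper's: compose the cluster-type witness $(\pp^n,\Sigma^n)\dashrightarrow (Y,B_Y)$ with $\phi$, note that $\phi\colon (Y,B_Y)\dashrightarrow (X,\phi_*B_Y)$ is crepant, and use the hypothesis that $\phi$ only contracts log canonical places to see that no divisor meeting $\mathbb{G}_m^n$ is contracted by the composition. The paper leaves that last step implicit, whereas you justify it with the computation $a_{E_Y}(Y,B_Y)=a_E(\pp^n,\Sigma^n)=1$, which is exactly the right argument.
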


\begin{proof}
Write \(B \coloneqq \phi_* B_Y\).
First note that \(\phi\colon (Y,B_Y) \dashrightarrow (X, B)\) is crepant birational by the negativity lemma.
Let $\psi\colon (\pp^n,\Sigma^n)\dashrightarrow (Y,B_Y)$ be a crepant birational map
such that $\mathbb{G}_m^n\cap {\rm Ex}(\psi)$
has codimension at least two in $\mathbb{G}_m^n$.
Then, as $\phi$ only contracts log canonical places of $(Y,B_Y)$, the composition 
$\phi\circ \psi\colon (\pp^n,\Sigma^n)\dashrightarrow (X,B)$ is a crepant birational map
such that $\mathbb{G}_m^n \cap {\rm Ex}(\phi\circ \psi)$ has codimension at least two in $\mathbb{G}_m^n$.
\end{proof}

Next, we construct dlt modifications with certain desirable properties. The nice dlt modifications constructed in the following lemmas will be useful in giving another characterization of cluster type log CY pairs (Lemma~\ref{lem:dlt-mod-cluster}) and in proving Theorem~\ref{thm:cluster-Fano}. First, we need some definitions.

\begin{definition}\label{def:toroidal-blow-ups}
{\em 
Let $(X,B)$ be a dlt pair 
and $X^{\rm snc}$ be the largest open subset of $X$
on which the pair is simple normal crossing.
Let $B^{\rm snc}$ be the restriction of $B$ to $X^{\rm snc}$.
\begin{enumerate}
\item
A \defi{formally toric blow-up}
of $(X^{\rm snc},B^{\rm snc})$
is a blow-up $Y^{\rm snc} \rightarrow X^{\rm snc}$ that is a formally toric morphism over any closed point of $(X^{\rm snc},B^{\rm snc})$ (see \cite[Definition 3.42]{MS21} for more details).
We say that a formally toric blow-up is a \defi{formally toric dlt blow-up}
if the log pullback of $(X^{\rm snc},B^{\rm snc})$ to $Y$ is a dlt pair.
\item
A \defi{formally toric dlt blow-up} of $(X,B)$ is a dlt modification
$(Y,B_Y)\rightarrow (X,B)$ that restricts to a formally toric blow-up 
of $(X^{\rm snc},B^{\rm snc})$.
We say that a formally toric blow-up
is a \defi{blow-up of a stratum} if it is induced by the blow-up of the 
reduced scheme structure of a stratum of $(X^{\rm snc},B^{\rm snc})$.
\end{enumerate}
}    
\end{definition}

\begin{remark}\label{rem:toroidal-blow-up}
{\em 
\cite[Proof of Proposition 37]{dFKX17} shows
that if $(X,B)$ is a dlt pair, then every formally toric dlt blow-up of
$(X^{\rm snc},B^{\rm snc})$ extends to a formally toric blow-up
of $(X,B)$.
Thus, every stratum of $(X,B)$ induces a formally toric blow-up of the stratum, and this formally toric dlt blow-up of \((X,B)\) is unique up to a small birational transformation over the base.
}    
\end{remark}

\begin{lemma}\label{lem:nice-dlt-mod-wrt-E}
Let $(X,B)$ be a log canonical pair,
and let $E\subset X$ be an effective \(\bb Q\)-Cartier \(\bb Q\)-divisor that 
has no common components with $B$.
Then there is a \(\bb Q\)-factorial dlt modification $\phi\colon (Y,B_Y)\rightarrow (X,B)$ such that the strict transform $\phi^{-1}_*E$ of \(E\) on \(Y\)
contains no log canonical centers of $(Y,B_Y)$. Furthermore, the pair \((Y,B_Y+ \epsilon \phi_*^{-1} E)\) is dlt for sufficiently small \(\epsilon > 0\).

If moreover $(X,B)$ is a dlt pair, then $\phi$ may be obtained by a sequence of formally toric blow-ups of strata of $(X,B)$. 
\end{lemma}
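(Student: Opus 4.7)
The plan is to first reduce to the dlt case via a dlt modification, and then handle the dlt case by iterating formally toric blow-ups of strata. For the reduction, I would take any $\qq$-factorial dlt modification $\psi\colon (X',B')\to(X,B)$ and set $E':=\psi^{-1}_*E$. The exceptional divisors of $\psi$ are log canonical places of $(X,B)$ whose images on $X$ have codimension at least two, so they are not strict transforms of components of $E$; combined with the assumption that $E$ and $B$ share no components, this gives that $E'$ and $B'$ share no components. Solving the dlt case for $(X',B')$ and $E'$ and composing with $\psi$ then yields the dlt modification required in the statement.

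For the dlt case, I would argue iteratively. If no log canonical center of $(X,B)$ is contained in $\mathrm{supp}(E)$, set $Y=X$. Otherwise, pick a minimal such center $V$; since $(X,B)$ is dlt, $V$ is a stratum, so by Remark~\ref{rem:toroidal-blow-up} there is a formally toric dlt blow-up $\pi_1\colon(X_1,B_{X_1})\to(X,B)$ induced by $V$. The exceptional divisor $\mathrm{Ex}(\pi_1)$ is a log canonical place of $(X,B)$, and since it is not a strict transform of any prime divisor on $X$, it is not a component of $(\pi_1)^{-1}_*E$ and hence not contained in its support. I would then replace $(X,B)$ by $(X_1,B_{X_1})$ and $E$ by its strict transform, and repeat. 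Once no log canonical center of the current dlt pair is contained in the strict transform of $E$, the composition is the required dlt modification $\phi\colon(Y,B_Y)\to(X,B)$, expressed as a sequence of formally toric blow-ups of strata.

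The main obstacle will be proving termination of this iteration. Conceptually, the procedure is an embedded toroidal resolution of $E$ with respect to the stratification determined by $B$, and termination can be deduced from toroidal resolution techniques along the lines of \cite[proof of Proposition~37]{dFKX17}. A more direct approach would construct a strictly decreasing invariant tracking the ``badness'' of the strict transform of $E$ along strata of the current dlt pair—for instance, a lexicographic tuple combining the minimum dimension $d$ of a stratum contained in the strict transform of $E$ with the local multiplicities of $E$ along such strata—and verify, via a local formally toric computation at a point of $V$, that this invariant strictly decreases under the blow-up. The subtle point is controlling the new strata arising as intersections of $\mathrm{Ex}(\pi_1)$ with strict transforms of components of $B$ containing $V$, since these may again lie in the strict transform of $E$ with the same dimension as $V$.

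For the final assertion, once $\phi^{-1}_*E$ contains no log canonical center of $(Y,B_Y)$, the pair $(Y,B_Y+\epsilon\phi^{-1}_*E)$ is lc for small $\epsilon>0$: every log canonical place $F$ of $(Y,B_Y)$ has center not contained in $\phi^{-1}_*E$, so the relevant pullback multiplicity vanishes and $a_F$ is unchanged, while for non-log-canonical places the log discrepancies shift only by $O(\epsilon)$, and lc-ness can be checked on a log resolution where the set of relevant divisors is finite. Dlt-ness then follows by restricting the snc open subset for $(Y,B_Y)$ to the smooth locus of $\phi^{-1}_*E$, noting that the log canonical centers of $(Y,B_Y+\epsilon\phi^{-1}_*E)$ coincide with those of $(Y,B_Y)$ for small $\epsilon$ and hence meet this open set.
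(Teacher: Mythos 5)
Your reduction to the dlt case and your argument for the final dlt-ness assertion are fine in outline. The genuine gap is exactly where you flag it: termination of the iterative blow-up procedure is the entire content of the dlt case, and you do not prove it. Neither of your two suggested strategies is carried out, and the second one (a lexicographic invariant built from the minimal dimension of a stratum contained in the strict transform of $E$ together with multiplicities along it) runs into the obstruction you yourself identify --- after blowing up a minimal stratum $V\subset\mathrm{supp}(E)$, the new strata $\mathrm{Ex}(\pi_1)\cap\widetilde{B_i}$ can again be contained in the strict transform of $E$ and have the same dimension as $V$, so the proposed invariant need not decrease. Making this work amounts to an embedded principalization/log-resolution argument in the toroidal category, which is a real theorem rather than a remark; as written the proof is incomplete at its central step.

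The paper avoids the issue by inverting the logic. For the first part it perturbs the pair: for $0<\epsilon\ll 1$ every non-lc center of $(X,B+\epsilon E)$ is an lc center of $(X,B)$, so a single application of the dlt modification of \cite[Theorem 3.1]{KollarKovacs10} to $(X,B+\epsilon E)$ produces $(Y,B_Y+\epsilon\phi^{-1}_*E)$ with all the desired properties at once; in particular the lc-ness of the perturbed pair, which you must re-derive by hand, comes for free from the construction. For the dlt case, rather than building the model by an iteration whose termination must be proved, the paper starts from the already-constructed model $Y_0$, shows that $Y_0^{\rm snc}\to X^{\rm snc}$ is formally toric via the complexity-zero criterion of \cite[Theorem 1]{MS21}, and then invokes the combinatorial fact that any refinement of a unimodular fan is dominated by a sequence of stellar subdivisions \cite[Proposition 11.1.7]{CLS11} to produce a higher model obtained by blow-ups of strata and dominating $Y_0$. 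The existence of the target is thus established first, and the sequence of strata blow-ups is extracted from known toric combinatorics instead of from a termination argument. To salvage your approach you would need either to prove termination honestly (essentially reproving toroidal principalization) or to restructure along these lines.
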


\begin{proof}
Consider the pair $(X,B+\epsilon E)$, which is not necessarily log canonical.
By linearity of log discrepancies with respect to the boundary divisor,
for $\epsilon>0$ small enough, every non-lc center of $(X,B+\epsilon E)$
is a log canonical center of $(X,B)$.
By~\cite[Theorem 3.1]{KollarKovacs10}
there exists a \(\bb Q\)-factorial dlt modification
$\phi \colon (Y,B_Y+\epsilon \phi^{-1}_* E)\rightarrow (X,B+\epsilon E)$ where
$B_Y$ is the sum of
\(\phi^{-1}_* B\) and the reduced exceptional divisor of $\phi$ (see \cite[Definitions and Notation 1.9]{KollarKovacs10}).
Then \((Y, B_Y)\) is dlt, and
by the previous considerations, we have 
$\phi^*(K_X+B)=K_Y+B_Y$.
Since $(Y,B_Y+\epsilon \phi^{-1}_* E)$
is log canonical, we know that  $\phi^{-1}_* E$ contains no log canonical centers of $(Y,B_Y)$.

Now, assume in addition that $(X,B)$ is dlt.
By the previous paragraph,
there exists a dlt modification $\phi_0\colon (Y_0,B_{Y_0})\rightarrow (X,B)$
such that ${\phi_0^{-1}}_*E$
contains no log canonical centers of $(Y_0,B_{Y_0})$.
We first argue that the dlt modification $(Y_0,B_{Y_0})$ induces a formally toric dlt blow-up
$(Y_0^{\rm snc},B_{Y_0^{\rm snc}})\rightarrow (X^{\rm snc},B^{\rm snc})$.
Locally at every closed point, the pair $(X^{\rm snc},B^{\rm snc})$ has local complexity zero \cite[Example 3.45, Lemma 3.41, and Definition 3.28]{MS21}.
The morphism $(Y_0^{\rm snc},B_{Y^{\rm snc}_0})\rightarrow (X^{\rm snc},B^{\rm snc})$ only contracts
divisors with coefficient one, so the complexity of this morphism is zero at every closed point of $X^{\rm snc}$ (see~\cite[Definition 3.15]{MS21}).
Hence,~\cite[Theorem 1]{MS21} implies that the dlt modification
$(Y_0^{\rm snc},B_{Y^{\rm snc}_0})\rightarrow (X^{\rm snc},B^{\rm snc})$ is formally toric over the base.
That is, \(\phi_0\) is a formally toric dlt blow-up
along some ideal sheaf whose support is contained in \({\rm supp}(B^{\rm snc})\).
Now we will construct \(Y\) as a higher model of \(Y_0^{\rm snc}\) that is obtained from \((X,B)\) by a sequence of formally toric dlt blow-ups along strata.

We first construct this model over formal neighborhoods in \(X^{\rm snc}\).
Here the morphism $Y_0^{\rm snc}\rightarrow X^{\rm snc}$
corresponds to unimodular fan refinements of unimodular cones
of $(X^{\rm snc},B^{\rm snc})$ \cite[Theorem 1.3.12 and Exercise 9.3.7]{CLS11}.
Every refinement of a unimodular cone $\sigma$ is refined by a sequence of stellar subdivisions of $\sigma$ \cite[Proposition 11.1.7]{CLS11}.
Hence, over formal neighborhoods in \(X^{\rm snc}\), there is a formally toric dlt blow-up 
$(Y^{\rm snc},B_Y^{\rm snc})\rightarrow (X^{\rm snc},B^{\rm snc})$
that is obtained by successively blowing up strata of $(X^{\rm snc}, B^{\rm snc})$
\cite[Proposition 3.3.15]{CLS11}
and such that there is a projective birational
morphism $Y^{\rm snc}\rightarrow Y_0^{\rm snc}$.
Furthermore, additional blow-ups along strata of \((X^{\rm snc},B^{\rm snc})\) result in a higher model that is still dlt and formally toric over \((X^{\rm snc},B^{\rm snc})\).
Now, consider the locally closed decomposition \(X^{\rm snc} = \bigsqcup_{i \in I} Z_i\) defined by the strata of \(B\). For each \(i\in I\), the above procedure produces the same sequence of blow-ups over any closed point of the locally closed subset \(Z_i\). Since the index set \(I\) is finite,
we may apply the above procedure to each \(Z_i\) (performing additional blow-ups along strata if necessary) to get a sequence of blow-ups of strata of \((X^{\rm snc},B^{\rm snc})\), which yields a formally toric dlt blow-up $(Y^{\rm snc},B_Y^{\rm snc})\rightarrow (X^{\rm snc},B^{\rm snc})$ that factors through a projective birational
morphism $Y^{\rm snc}\rightarrow Y_0^{\rm snc}$.
By Remark~\ref{rem:toroidal-blow-up}, this extends to a formally toric dlt blow-up
$(Y,B_Y)\rightarrow (X,B)$.
By construction, the induced birational map
$\psi \colon Y\dashrightarrow Y_0$ is a birational contraction
and ${\rm Ex}(\psi)$ contains no log canonical centers of $(Y,B_Y)$.
Then, the strict transform of $E$ on $Y$ contains no log canonical centers of $(Y,B_Y)$.
\end{proof}

We can now give the following equivalent characterization of cluster type log CY pairs.

\begin{lemma}\label{lem:dlt-mod-cluster}
A log CY pair $(X,B)$
is of cluster type if and only if 
there exists a \(\bb Q\)-factorial dlt modification $(Y,B_Y)\rightarrow (X,B)$
with a crepant birational contraction
$\pi\colon (Y,B_Y) \dashrightarrow (\pp^n,\Sigma^n)$.
Furthermore, if $E \coloneqq {\rm Ex}(\pi)\setminus {\rm supp}(B_Y)$, we may assume that
$(Y,B_Y+\epsilon E)$ is a dlt pair for $\epsilon > 0$ small enough.
\end{lemma}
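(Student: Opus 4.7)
The plan is to prove the two implications separately, relying crucially on Lemmas \ref{lem:cluster-type-descend-dlt} and \ref{lem:nice-dlt-mod-wrt-E}.

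For the \emph{backward direction}, suppose we are given $\phi_0\colon (Y,B_Y)\to(X,B)$ a $\mathbb{Q}$-factorial dlt modification and a crepant birational contraction $\pi\colon (Y,B_Y)\dashrightarrow (\mathbb{P}^n,\Sigma^n)$. I would first verify that $(Y,B_Y)$ is of cluster type by taking $\pi^{-1}$ as the witness. Since $\pi$ is a birational contraction, its inverse $\pi^{-1}\colon \mathbb{P}^n \dashrightarrow Y$ is an open immersion on the complement of the codimension $\geq 2$ subset $\pi({\rm Ex}(\pi))\subset \mathbb{P}^n$. In particular, $\mathbb{G}_m^n \cap {\rm Ex}(\pi^{-1})$ has codimension $\geq 2$ in $\mathbb{G}_m^n$ and hence contains no divisors, verifying the cluster type condition. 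Then Lemma \ref{lem:cluster-type-descend-dlt} applies to $\phi_0$, which is a birational contraction whose exceptional divisors are components of $B_Y$ (hence lc places of $(Y,B_Y)$), to conclude that $(X,B)={\phi_0}_*(Y,B_Y)$ is of cluster type.

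For the \emph{forward direction}, let $\phi\colon (\mathbb{P}^n,\Sigma^n) \dashrightarrow (X,B)$ witness that $(X,B)$ is of cluster type. First, note that each component $H_i$ of $\Sigma^n$ contracted by $\phi$ is an lc place of $(X,B)$: by crepantness $a_{H_i}(X,B)=a_{H_i}(\mathbb{P}^n,\Sigma^n)=0$. The plan is to take a $\mathbb{Q}$-factorial dlt modification $\phi_0\colon (Y_0,B_{Y_0}) \to (X,B)$ that extracts all such $H_i$, and then to verify that $\pi_0 \coloneqq \phi^{-1}\circ \phi_0\colon Y_0 \dashrightarrow \mathbb{P}^n$ is a crepant birational contraction. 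By construction, every component of $\Sigma^n$ corresponds to a divisor on $Y_0$. For any prime divisor $F\subset \mathbb{G}_m^n$, the cluster type condition places the non-iso locus of $\phi$ in $\mathbb{G}_m^n$ in codimension $\geq 2$, so $\phi$ cannot contract $F$; hence $F$ maps birationally onto a prime divisor on $X$ with log discrepancy $1$ (and thus not in $B$), whose strict transform on $Y_0$ is the divisor that $\pi_0$ sends onto $F$. Together these observations show every prime divisor on $\mathbb{P}^n$ is divisorial on $Y_0$, so $\pi_0$ is a birational contraction.

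For the \emph{``furthermore''} part, I would apply Lemma \ref{lem:nice-dlt-mod-wrt-E} to the dlt pair $(Y_0,B_{Y_0})$ and the effective $\mathbb{Q}$-Cartier divisor $E_0 \coloneqq {\rm Ex}(\pi_0)\setminus{\rm supp}(B_{Y_0})$ (which by definition has no common components with $B_{Y_0}$). This produces a $\mathbb{Q}$-factorial dlt modification $(Y,B_Y)\to(Y_0,B_{Y_0})$ whose exceptional divisors lie in $B_Y$, and such that the strict transform $E$ of $E_0$ on $Y$ contains no lc centers of $(Y,B_Y)$; hence $(Y,B_Y+\epsilon E)$ is dlt for small $\epsilon>0$. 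The composition $(Y,B_Y)\to(X,B)$ remains a $\mathbb{Q}$-factorial dlt modification, and $\pi \coloneqq \pi_0\circ (Y\to Y_0)$ remains a crepant birational contraction with ${\rm Ex}(\pi)\setminus{\rm supp}(B_Y)=E$. The main obstacle will be interpreting the cluster type condition correctly to rule out divisors in $\mathbb{G}_m^n$ being contracted by $\phi$, which is precisely what upgrades $\pi_0$ from a birational map to a birational contraction; once this is secured, the existing dlt-modification and toric machinery handle the rest.
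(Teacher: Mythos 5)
Your proposal is correct and follows essentially the same route as the paper: the backward direction uses $\pi^{-1}$ as the cluster-type witness for $(Y,B_Y)$ and then descends via Lemma~\ref{lem:cluster-type-descend-dlt}, the forward direction produces a dlt modification extracting the contracted log canonical places so that the induced map to $(\pp^n,\Sigma^n)$ is a birational contraction, and the ``furthermore'' clause is obtained by applying Lemma~\ref{lem:nice-dlt-mod-wrt-E} to ${\rm Ex}(\pi)\setminus{\rm supp}(B_Y)$. Your version merely spells out in more detail why no divisor of $\pp^n$ meeting the torus can be contracted (so that only components of $\Sigma^n$ need to be extracted), a point the paper leaves implicit.
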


\begin{proof}
First, assume that $(X,B)$ is of cluster type.
Then there exists a crepant birational map
$(\pp^n,\Sigma^n)\dashrightarrow (X,B)$
that only contracts log canonical places.
In particular, there exists a \(\bb Q\)-factorial dlt modification $(Z,B_Z)\rightarrow (X,B)$
such that the composition $p\colon Z\dashrightarrow \pp^n$ is a birational contraction. Let $E_Z \coloneqq {\rm Ex}(p)\setminus {\rm supp}(B_Z)$.
Applying Lemma~\ref{lem:nice-dlt-mod-wrt-E} to \((Z, B_Z)\) and $E_Z$
yields a dlt modification $q\colon (Y,B_Y)\rightarrow (Z,B_Z)$ such that $(Y,B_Y+ \epsilon q^{-1}_*E_Z)$ is dlt for $\epsilon>0$ small enough.
Denote the composition by \(\pi \coloneqq p \circ q\colon Y \dashrightarrow \bb P^n\).
Then $q^{-1}_*E_Z={\rm Ex}(\pi)\setminus {\rm supp}(B_Y)$,
so \((Y, B_Y) \to (X,B)\) is the desired dlt modification.

Now assume $(X,B)$ admits a \(\bb Q\)-factorial dlt modification $\phi\colon (Y,B_Y)\rightarrow (X,B)$
with a crepant birational contraction $\pi\colon (Y,B_Y)\dashrightarrow (\pp^n,\Sigma^n)$.
Then $(X,B)$ is of cluster type by Lemma~\ref{lem:cluster-type-descend-dlt}.
Furthermore, if $E = {\rm Ex}(\pi)\setminus {\rm supp}(B_Y)$, then by Lemma~\ref{lem:nice-dlt-mod-wrt-E} we may replace $(Y,B_Y)$ with a higher $\qq$-factorial dlt modification
such that $(Y,B_Y+\epsilon E)$ is dlt for $\epsilon>0$ small enough.
\end{proof}

The next lemma will be used to ensure that certain linear combinations of pairs are terminal on higher dlt modifications.

\begin{lemma}\label{lem:comb-terminal-dlt}
Let $(X,B)$ be a dlt pair of index one.
Let $(X,\Delta)$ be a terminal pair, and assume that the prime components of \(\Delta\) are \(\bb Q\)-Cartier.
Let $(Z,B_Z)\rightarrow (X,B)$ be any dlt modification.
Then for any sufficiently small $\epsilon > 0$, there exists a \(\bb Q\)-factorial dlt modification $(Y,B_Y)\rightarrow (Z,B_Z)$
such that the log pullback of
$(X,(1-\epsilon)B+\epsilon \Delta)$ to $Y$
is a terminal pair.
\end{lemma}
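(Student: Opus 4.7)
The plan is to pick $\epsilon > 0$ sufficiently small, identify the finite set $S_\epsilon$ of divisorial valuations over $X$ that must be extracted to obtain terminality, and then construct $(Y, B_Y) \to (Z, B_Z)$ as a $\mathbb{Q}$-factorial dlt modification whose exceptional divisors over $Z$ are precisely the elements of $S_\epsilon$ not already appearing on $Z$. First I observe that for every $\epsilon \in (0,1)$, the pair $(X, C_\epsilon) \coloneqq (X, (1-\epsilon)B + \epsilon\Delta)$ is klt: the terminality of $(X, \Delta)$ forces $\lfloor \Delta \rfloor = 0$, and the log discrepancies $a_E(X, C_\epsilon) = (1-\epsilon)\,a_E(X, B) + \epsilon\,a_E(X, \Delta)$ are strictly positive. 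By BCHM~\cite{BCHM}, $(X, C_\epsilon)$ admits a $\mathbb{Q}$-factorial terminalization, so the set $S_\epsilon$ of divisors $E$ over $X$ that are exceptional over $X$ and non-terminal for $(X, C_\epsilon)$ is finite. By Lemma~\ref{lem:div-combination}, every element of $S_\epsilon$ is an lc place of $(X, B)$, hence of $(Z, B_Z)$ by crepant birational invariance. I shrink $\epsilon$ further so that every prime divisor of $Z$ exceptional over $X$ also lies in $S_\epsilon$: for such $E$ we have $a_E(X, B) = 0$ and hence $a_E(X, C_\epsilon) = \epsilon\,a_E(X, \Delta)$, so it suffices to impose $\epsilon \leq 1/a_E(X, \Delta)$ for each of the finitely many such $E$.

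The key step is to construct the dlt modification $(Y, B_Y) \to (Z, B_Z)$ with $\mathrm{Ex}(Y/Z)$ equal to the specified finite set of lc places of $(Z, B_Z)$. Since $(Z, B_Z)$ is $\mathbb{Q}$-factorial dlt, its lc places correspond to toroidal valuations (rays in the fan) over the snc locus, and any given finite set of such valuations can be realized as the exceptional divisors of a $\mathbb{Q}$-factorial dlt model by a sequence of star subdivisions along those rays, extended to $(Z, B_Z)$ via the formally toric dlt blow-ups of strata as in the proof of Lemma~\ref{lem:nice-dlt-mod-wrt-E} (and Remark~\ref{rem:toroidal-blow-up}). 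The exact-extraction condition is the main obstacle of the proof: extracting strictly fewer divisors leaves non-terminal places of $(X, C_\epsilon)$ exceptional over $Y$, while extracting strictly more would force $C_{Y,\epsilon}$ to have a negative coefficient on any extra divisor $D$, since such $D$ would satisfy $a_D(X, C_\epsilon) > 1$.

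With $Y$ in hand, the verification that $(Y, C_{Y,\epsilon})$ is terminal is short. The coefficients of $C_{Y,\epsilon} = (1-\epsilon) B_Y + \epsilon\Delta_Y$ are nonnegative: if $D$ is a divisor of $X$, this is immediate; and if $D$ is exceptional over $X$, then $D \in S_\epsilon$ by construction, so $a_D(X, C_\epsilon) \leq 1$ and $\coeff_D(C_{Y,\epsilon}) = 1 - a_D(X, C_\epsilon) \geq 0$. For any divisor $F$ exceptional over $Y$, $F$ is exceptional over $X$ and $F \notin S_\epsilon$ by construction, so $a_F(Y, C_{Y,\epsilon}) = a_F(X, C_\epsilon) > 1$, completing the verification.
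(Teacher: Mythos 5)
Your reduction of the problem is the right one: by Lemma~\ref{lem:div-combination} and the index-one hypothesis, the set \(S_\epsilon\) of exceptional non-terminal places of \((X,(1-\epsilon)B+\epsilon\Delta)\) is a finite set of log canonical places of \((X,B)\), and the lemma amounts to extracting \emph{exactly} this set on a model that is simultaneously a \(\qq\)-factorial dlt modification of \((Z,B_Z)\). You correctly flag exact extraction as "the main obstacle," but the step you offer to overcome it is precisely where the proof breaks. A star subdivision of a unimodular cone at an arbitrary interior ray does \emph{not} produce a dlt pair: already for the weighted blow-up of \(\mathbb{A}^2\) at the ray \((1,2)\), the cone \(\langle e_1,(1,2)\rangle\) has index \(2\), so the corresponding torus-fixed point is a singular point of \(Y\) that is a log canonical center of \((Y,B_Y)\) and hence cannot meet any open set witnessing dlt-ness. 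Restoring dlt-ness forces you to extract further log canonical places of \((X,B)\); any such place \(F\notin S_\epsilon\) has \(a_F(X,(1-\epsilon)B+\epsilon\Delta)=\epsilon\, a_F(X,\Delta)>1\), so its coefficient in the log pullback is negative and the log pullback is no longer a pair. So your construction is caught exactly in the tension you identified, and the assertion that the specific sets \(S_\epsilon\) can be realized as the rays of a smooth, projective, coherent subdivision over every stratum (in all dimensions, for all terminal \(\Delta\)) is a substantive unproved claim, not a routine application of Remark~\ref{rem:toroidal-blow-up}. (It is also telling that your argument never uses the hypothesis that the components of \(\Delta\) are \(\qq\)-Cartier.)

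The paper resolves this by reversing the logic rather than prescribing the exceptional set in advance. It first applies Lemma~\ref{lem:nice-dlt-mod-wrt-E} to the components \(D\) of \(\Delta\) not contained in \(B\), producing a dlt modification \((W,B_W)\to(Z,B_Z)\) on which the strict transform of \(D\) avoids all log canonical centers; \emph{then} it chooses \(\epsilon\) small relative to \(W\). The payoff is that every non-terminal center of the log pullback \(\Gamma_W\) of \((1-\epsilon)B+\epsilon\Delta\) is a stratum of \((W,B_W)\) near whose generic point \(\operatorname{supp}\Gamma_W\subseteq\operatorname{supp}B_W\) is snc, so \((W,\Gamma_W)\) is log smooth around its non-terminal locus and can be terminalized by \emph{smooth} blow-ups of strata of \((W,B_W)\) (\cite[Lemma 2.29 and Proposition 2.36(2)]{KM98}). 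Smooth strata blow-ups automatically preserve dlt-ness and, because the boundary stays effective throughout the inductive process, automatically end up extracting exactly \(S_\epsilon\) — the exact-extraction property is a consequence of the construction rather than an input to it. To repair your proof you would need to supply this mechanism (or an equivalent one); as written, the existence of your model \(Y\) is unestablished.
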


\begin{proof}
Let $D$ be the (reduced) sum of the prime components of $\Delta$
that are not components of $B$.
By Lemma~\ref{lem:nice-dlt-mod-wrt-E}, there is a \(\bb Q\)-factorial dlt modification $\phi\colon (W,B_W)\rightarrow (Z,B_Z)$
such that
\(\phi^{-1}_* D\)
contains no log canonical centers of $(W,B_W)$.
For sufficiently small \(\epsilon > 0\),
every divisor contracted by $W\rightarrow X$
is a non-terminal place of $(X,(1-\epsilon)B+\epsilon\Delta)$,
so the log pullback $(W,\Gamma_W)$ of $(X,(1-\epsilon)B+\epsilon\Delta)$ to $W$ is a pair; furthermore, the pair $(W,\Gamma_W)$ is klt.

By Lemma~\ref{lem:div-combination},
every exceptional non-terminal place of $(X,(1-\epsilon)B+\epsilon\Delta))$ is a log canonical place of $(X,B)$.
In particular, every exceptional non-terminal place of $(W,\Gamma_W)$ is a log canonical place of $(W,B_W)$,
and the log canonical centers of \((W,B_W)\) are precisely its proper strata.
By construction, the components of $\Gamma_W$ that are not contained in $B_W$ are precisely the components of \(\phi^{-1}_* D\).
So, on a neighborhood of
the generic point of every proper stratum of $(W,B_W)$,
the support of $\Gamma_W$ is contained
in the support of $B_W$.
Since $(W,B_W)$ is log smooth along the generic point of each stratum,
we conclude that $(W,\Gamma_W)$ is log smooth around every non-terminal center.
Therefore, we may obtain a terminal model of $(W,\Gamma_W)$
by successively blowing up strata of $(W,B_W)$ (see \cite[Lemma 2.29 and Proposition 2.36(2)]{KM98}).
Hence, the composition $(Y,B_Y)\rightarrow (W,B_W)\rightarrow (Z,B_Z)$ is a \(\bb Q\)-factorial dlt modification 
for which the log pullback of
\((X,(1-\epsilon)B+\epsilon\Delta)\) is terminal.
\end{proof}

Finally, we will apply the following two technical lemmas to certain fibrations in Section~\ref{sec:cluster-type} to show that, after shrinking the base, the outcomes of certain MMPs will have nice fibers.

\begin{lemma}\label{lem:nice-fibers-klt-CY-fibration}
Let $(Y,\Gamma)$ be a terminal pair and $f\colon (Y,\Gamma)\rightarrow T$ a log Calabi-Yau fibration. 
Let \(U \coloneqq U_1 \cap U_2 \cap U_3\) be the intersection of the following dense open subsets of \(T\):
\begin{enumerate}[label=(\roman*)]
    \item the smooth locus $U_1 \subset T$, \label{item:nice-fibers-klt-CY-fibration-U_1}
    \item an open set $U_2$ over which all log fibers $(Y_t,\Gamma_t)$ are terminal and $\Gamma_t = \Gamma|_{Y_t}$, and\label{item:nice-fibers-klt-CY-fibration-U_2}
    \item the complement $U_3$ of the boundary divisor and the moduli divisor induced by the canonical bundle formula
    on $T$ by $(Y,\Gamma)$. \label{item:nice-fibers-klt-CY-fibration-U_3}
\end{enumerate}
Let \(E\) be an effective $\qq$-Cartier \(\bb Q\)-divisor on $Y_U$
that contains no fiber of $f$.
Then for \(\delta > 0\) small enough and
for any sequence $Y_U\dashrightarrow W$ of steps of the $(K_{Y_U}+\Gamma_U+\delta E)$-MMP over $U$,
the following conditions hold:
\begin{enumerate}
    \item the fibers of $Y_U\rightarrow U$ are normal, \label{item:nice-fibers-klt-CY-fibration-normal-fibers-Y}
    \item the fibers of $W\rightarrow U$ are normal, \label{item:nice-fibers-klt-CY-fibration-normal-fibers-W}
    \item for every $t\in U$, the induced birational map $Y_t\dashrightarrow W_t$ is a contraction, and \label{item:nice-fibers-klt-CY-fibration-fibers-birational-contraction}
    \item if $\Gamma_W$ is the pushforward of $\Gamma$ to $W$, then the log fibers of $(W,\Gamma_W)\rightarrow U$ are terminal. \label{item:nice-fibers-klt-CY-fibration-log-fibers-klt}
\end{enumerate}
\end{lemma}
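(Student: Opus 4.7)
The plan is to exploit the canonical bundle formula to convert the given MMP into a pure $\delta E$-MMP, and then to verify that each MMP step restricts to an MMP step on every fiber. Since terminality implies normality and MMP preserves terminality, all four conclusions will follow. Because $U\subseteq U_3$, the canonical bundle formula for $(Y,\Gamma)\to T$ restricts to $K_{Y_U}+\Gamma_U\sim_{\bb Q,U}0$, so the $(K_{Y_U}+\Gamma_U+\delta E)$-MMP over $U$ coincides with the relative $\delta E$-MMP. Since $E$ contains no fiber of $f$, the restriction $E|_{Y_t}$ is a well-defined effective $\bb Q$-Cartier divisor on $Y_t$ for every $t\in U$, and by openness of terminality in the boundary together with assumption $U\subseteq U_2$, one may pick $\delta_0>0$ so that for every $0<\delta\leq\delta_0$ both $(Y_U,\Gamma_U+\delta E)$ and each log fiber $(Y_t,\Gamma_t+\delta E|_{Y_t})$ are terminal. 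Conclusion~\eqref{item:nice-fibers-klt-CY-fibration-normal-fibers-Y} is then immediate: the hypothesis $U\subseteq U_2$ already gives that $(Y_t,\Gamma_t)$ is terminal, hence klt, hence $Y_t$ is normal.

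The heart of the proof is to show that the relative MMP $Y_U\dashrightarrow W$ restricts, for every $t\in U$, to a sequence of MMP steps $(Y_t,\Gamma_t+\delta E|_{Y_t})\dashrightarrow(W_t,\Gamma_{W_t}+\delta E|_{W_t})$ on the fiber. Granting this, conclusions \eqref{item:nice-fibers-klt-CY-fibration-normal-fibers-W}--\eqref{item:nice-fibers-klt-CY-fibration-log-fibers-klt} are immediate: the fiberwise composition is a birational contraction onto a terminal (hence klt, hence normal) pair. To establish the restriction property, I would use that, first, the smoothness of $U$ (assumption $U\subseteq U_1$) implies that $Y_t\subset Y_U$ is cut out by a regular sequence, so inversion-of-adjunction and Bertini-type arguments apply to each fiber; second, every extremal ray contracted by a step of the MMP over $U$ lies in a single fiber; and third, terminality is preserved by each MMP step both on $Y_U$ and after restriction to any fiber. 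By induction on the number of MMP steps, combined with the fact that an extremal contraction or flip of a terminal pair over a smooth base commutes with restriction to a Cartier fiber of the base, this produces the desired fiberwise MMP.

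The main obstacle is making the ``MMP commutes with restriction to fibers'' step work for \emph{every} $t\in U$ rather than only generically. The strength of the combined conditions $U\subseteq U_1\cap U_2\cap U_3$ is essential here: $U_3$ trivializes the canonical bundle formula on $U$ and reduces the given MMP to an $E$-MMP; $U_2$ guarantees fiberwise terminality of the starting pair, which then propagates under the MMP once one has commutativity with restriction; and $U_1$ supplies the regular-sequence structure on fibers needed to control each individual MMP step on the fiber and to preserve normality. The horizontality hypothesis on $E$ is also used here to ensure that no extraneous vertical divisors enter the MMP and that $E|_{Y_t}$ remains a legitimate $\bb Q$-divisor at every $t\in U$.
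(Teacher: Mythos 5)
Your reduction via the canonical bundle formula and your treatment of conclusion (1) are fine, but the heart of your argument rests on a claim that is not true in the generality you need: that ``an extremal contraction or flip of a terminal pair over a smooth base commutes with restriction to a Cartier fiber of the base.'' This is precisely the delicate point the lemma is designed to address, and it fails without further input. A single step of the relative MMP has exceptional locus that is typically horizontal over $U$, and its restriction to a special fiber $Y_t$ need not be a step of any MMP on $Y_t$: a divisorial contraction on $Y_U$ can restrict to a small contraction on a special fiber, a flip can restrict to a map that extracts a divisor, and the scheme-theoretic fiber $W_t$ can a priori fail to be normal or even reduced. Your appeal to ``every extremal ray lies in a single fiber'' does not help, since the ray in $N_1(Y_U/U)$ has representative curves in every fiber, so each step acts on all fibers simultaneously. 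As written, the induction on MMP steps assumes the conclusion rather than proving it, and in particular conclusion (2) --- normality of the fibers of $W \to U$ --- is never actually established.

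The paper's proof circumvents fiberwise tracking of MMP steps entirely. Fixing $t \in U$, one chooses $d = \dim U$ general hyperplanes $H_1,\dots,H_d$ through $t$ and forms the lc pair $(Y, \Gamma + f^*H_1 + \cdots + f^*H_d + \delta E)$, which is log canonical near $t$ by the canonical bundle formula (this is where $U_3$ enters) and has $Y_t$ as a \emph{minimal} log canonical center with adjunction pair $(Y_t, \Gamma_t + \delta E_t)$ terminal. The same structure persists on $W$ after the MMP, so Kawamata's theorem on normality of minimal lc centers gives (2). The composite $\phi_t \colon Y_t \dashrightarrow W_t$ is $(K_{Y_t}+\Gamma_t+\delta E_t)$-negative by adjunction, so the negativity lemma applied once to the whole composite yields the discrepancy inequality $a_{F}(W_t,\Gamma_{W_t}+\delta E_{W_t}) \geq a_{F}(Y_t,\Gamma_t+\delta E_t)$; this simultaneously rules out extracted divisors (they would have negative coefficient in the boundary), giving (3), and propagates terminality, giving (4). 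If you want to salvage your approach, you would need to import exactly this adjunction-to-minimal-lc-centers machinery (or an equivalent inversion-of-adjunction argument valid at \emph{every} closed point of $U$, not just the generic one); the regular-sequence structure from $U_1$ alone does not supply it.
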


\begin{proof}
Let $d$ be the dimension of $U$, and let
$t\in U$ be a closed point.
Then~\eqref{item:nice-fibers-klt-CY-fibration-normal-fibers-Y} holds by~\ref{item:nice-fibers-klt-CY-fibration-U_2}; furthermore, if \(H_1,\dots,H_d\) are general hyperplanes on \(U\) containing \(t\), the pair
$(U,H_1+\dots+H_d;t)$ is log smooth and has $t$ as a log canonical center.
Consider the pair
\begin{equation}\label{eq:pair-pull}
(Y,\Gamma+f^*H_1+\dots+f^*H_d). 
\end{equation}
This pair is log canonical over a neighborhood of \(t\) by~\ref{item:nice-fibers-klt-CY-fibration-U_3} and the canonical bundle formula (see, e.g.,~\cite[Proposition 4.16]{Fil18}).
The pair $(Y_t,\Gamma_t)$ is terminal by~\ref{item:nice-fibers-klt-CY-fibration-U_2},
so $Y_t$ is a minimal log canonical center of~\eqref{eq:pair-pull}. Further $(Y_t,\Gamma_t)$ is the pair induced by adjunction to the minimal log canonical center:
indeed, if $(Y_t,\Gamma_t)$ is the pair obtained by adjunction of~\eqref{eq:pair-pull} to $Y_t$, then $(Y_t,\Gamma_t)$ is log CY and $\Gamma_t\geq \Gamma_t$, so $\Gamma_t=\Gamma_t$.

In what follows, we will show that there is an open neighborhood of \(t\) such that, for sufficiently small \(\delta > 0\) depending on \(t\), the conclusion holds over this neighborhood of \(t\). Since \(U\) is compact, we can find \(\delta\) that works for all \(t\in U\).

Let \(E_t\) denote the restriction of \(E\) to \(Y_t\).
By assumption, \(E\) contains no log canonical centers of~\eqref{eq:pair-pull} over a neighborhood of \(t\).
Therefore, for $\delta>0$ small enough, the pair
\begin{equation}\label{eq:pair-pull-2}
(Y,\Gamma+f^*H_1+\dots+f^*H_d+\delta E)    
\end{equation}
is also log canonical over a neighborhood of \(t\), 
and the pair $(Y_t,\Gamma_t+\delta E_t)$, which is obtained by adjunction
to the minimal log canonical center,
is terminal.
Let $\phi\colon Y_U \dashrightarrow W$ be a sequence
of steps of the $(K_{Y_U}+\Gamma_U+\delta E)$-MMP over $U$.
Then, we have a commutative diagram
\[
\xymatrix
{
(Y_U,\Gamma_U+f^*H_1+\dots+f^*H_d+\delta E)\ar@{-->}[r]^-{\phi} \ar[d]_-{f} & 
(W,\Gamma_W+{f'}^*H_1+\dots+{f'}^*H_d+\delta E_W) \ar[ld]^-{f'} \\ 
T
}
\]
where $\Gamma_W$ (resp. $E_W$) is the pushforward
of $\Gamma_U$ (resp. $E$) to $W$.
The pair 
\begin{equation}\label{eq:pair-pull-3}
(W,\Gamma_W+{f'}^*H_1+\dots+{f'}^*H_d+\delta E_W)
\end{equation}
is log canonical over a neighborhood of \(t\),
and $W_t$ is a minimal log canonical center.
By \cite{Kawamata98}, this implies that~\eqref{item:nice-fibers-klt-CY-fibration-normal-fibers-W} holds.

It remains to show~\eqref{item:nice-fibers-klt-CY-fibration-fibers-birational-contraction} and~\eqref{item:nice-fibers-klt-CY-fibration-log-fibers-klt}. First, by adjunction, the induced birational map
\begin{equation}\label{eq:neg-map}
    \phi_t\colon (Y_t, \Gamma_t+\delta E_t) \dashrightarrow (W_t,\Gamma_{W_t}+\delta E_{W_t})
\end{equation}
between minimal log canonical centers of~\eqref{eq:pair-pull-2} and~\eqref{eq:pair-pull-3}
is $(K_{Y_t}+\Gamma_t+\delta E_t)$-negative,
so the negativity lemma applied to a resolution of \(\phi_t\) shows that
\begin{equation}\label{ineq:log-disc}
a_{F_t}(W_t,\Gamma_{W_t}+\delta E_{W_t}) \geq
a_{F_t}(Y_t,\Gamma_t+\delta E_t)
\end{equation}
for any prime divisor \(F_t\).
We now show~\eqref{item:nice-fibers-klt-CY-fibration-fibers-birational-contraction}.
By contradiction, suppose \(\phi_t\) is not a birational contraction, so it extracts some prime divisor \(F_t\). As \(F_t\) is exceptional over \(Y_t\), the inequality~\eqref{ineq:log-disc} implies that
$F_t$ appears in $\Gamma_{W_t}+\delta E_{W_t}$
with negative coefficient,
contradicting the fact that $(W_t,\Gamma_{W_t}+\delta E_{W_t})$
is a pair.
This proves~\eqref{item:nice-fibers-klt-CY-fibration-fibers-birational-contraction}.
Finally, the inequality~\eqref{ineq:log-disc} and the terminality of \((Y_t,\Gamma_t+\delta E_t)\) imply that $(W_t,\Gamma_{W_t}+\delta E_t)$ is terminal and hence that $(W_t,\Gamma_{W_t})$ is terminal, so~\eqref{item:nice-fibers-klt-CY-fibration-log-fibers-klt} holds.
\end{proof}

\begin{lemma}\label{lem:nice-fibers-lc-CY-fibration}
Let $(Y,B)$ be a dlt pair and $f\colon (Y,B)\rightarrow T$ a log Calabi--Yau fibration.
Let \(U \coloneqq U_1 \cap U_2 \cap U_3\) be the intersection of the following dense open subsets of \(T\):
\begin{enumerate}[label=(\roman*)]
    \item the smooth locus $U_1 \subset T$, \label{item:nice-fibers-lc-CY-fibration-U_1}
    \item an open set $U_2$ over which the log fibers of $(Y,B)\rightarrow T$ are dlt and \(B_t = B|_{Y_t}\), and \label{item:nice-fibers-lc-CY-fibration-U_2}
    \item the complement $U_3$ of the support of the boundary divisor and the moduli divisor induced by the canonical bundle formula on $T$ by $(Y,B)$. \label{item:nice-fibers-lc-CY-fibration-U_3}
\end{enumerate}
Let \(E\) be an effective $\mathbb{Q}$-Cartier \(\bb Q\)-divisor \(E\) on $Y_U$ that does not contain any fiber of \(f\)
and does not contain any irreducible component of the restriction of 
any log canonical center of $(Y,B)$ to a fiber of $f$.
Then for \(\delta > 0\) small enough and
for any sequence
$Y_U \dashrightarrow W$ of steps of the $(K_{Y_U}+B_U+\delta E)$-MMP over $U$,
the normalizations of the log fibers of $(W,B_W)\rightarrow U$ are lc.
\end{lemma}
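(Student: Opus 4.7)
The plan is to adapt the argument of Lemma~\ref{lem:nice-fibers-klt-CY-fibration} to the dlt setting. Fix a closed point $t\in U$ and let $d=\dim T$; since $U$ is quasi-compact in the Zariski topology, it suffices to produce a $\delta>0$ that works in a neighborhood of each such $t$. Choose general hyperplanes $H_1,\dots,H_d$ on $U$ containing $t$ so that $(U, H_1+\cdots+H_d;t)$ is log smooth with $t$ as a minimal log canonical center. The canonical bundle formula applied to the log Calabi--Yau fibration $(Y,B)\to T$, together with the choice of $U$ in~\ref{item:nice-fibers-lc-CY-fibration-U_3}, shows that the pair
\[
\bigl(Y,\, B+f^*H_1+\cdots+f^*H_d\bigr)
\]
is log canonical over a neighborhood of $t$ and has $Y_t$ as a log canonical center (cf.~\cite[Proposition 4.16]{Fil18}).

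The hypothesis on $E$ implies that $E$ contains no log canonical center of $(Y,B)$ meeting $Y_U$: any such center $C$ satisfies $C\cap Y_{t'}\neq \emptyset$ for some $t'\in U$, and $C\subseteq E$ would then force $E$ to contain an irreducible component of the restriction $C|_{Y_{t'}}$, contradicting the hypothesis (and in the vertical case in which $C\subseteq Y_{t'}$, the component in question is $C$ itself). Therefore, for $\delta>0$ sufficiently small (depending on $t$), the pair $(Y, B+\sum_i f^*H_i+\delta E)$ remains log canonical over a neighborhood of $t$, with $Y_t$ still a log canonical center.

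The central step is to transport this log canonicity across the MMP. For any curve $C$ contracted by a step of the $(K_{Y_U}+B_U+\delta E)$-MMP over $U$, the projection formula gives $C\cdot f^*H_i = H_i\cdot f_*C = 0$, so this MMP coincides with the MMP for $K+B+\sum_i f^*H_i+\delta E$ over $U$. Since every step of a standard MMP preserves log canonicity, the pair
\[
\bigl(W,\, B_W+\sum_i (f')^*H_i+\delta E_W\bigr)
\]
is log canonical over a neighborhood of $t$, with $W_t$ a log canonical center of it.

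Finally, since $W_t$ equals the set-theoretic intersection of the reduced Cartier divisors $(f')^*H_i$, iterated adjunction along these divisors (in the spirit of Koll\'ar's adjunction for log canonical centers) produces a log canonical pair structure on the normalization $W_t^\nu$ whose boundary dominates $(B_W)_t^\nu$. This gives the desired log canonicity of $(W_t^\nu,(B_W)_t^\nu)$. I expect the main technical obstacle to be this final adjunction step: one must verify that the \emph{differents} of the $(f')^*H_i$ on $W_t^\nu$ do not receive negative contributions after the MMP. This should follow from the genericity of the $H_i$'s on $T$ together with the fact that the $(f')^*H_i$ appear with coefficient one in a log canonical pair, but it requires a careful analysis of the (possibly non-normal) behavior of $W_t$, in contrast with Lemma~\ref{lem:nice-fibers-klt-CY-fibration}, where $Y_t$ is a minimal log canonical center and terminality gives an immediate fiberwise comparison via the negativity lemma.
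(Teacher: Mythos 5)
Your outline follows the paper's proof closely: reduce to a neighborhood of a fixed $t\in U$, add the pullbacks of $d$ general hyperplanes through $t$ to make $Y_t$ a log canonical center of $(Y,B+\sum_i f^*H_i)$, perturb by $\delta E$, observe that the MMP is unaffected by the vertical divisors $f^*H_i$, and conclude by adjunction to $W_t$. However, there is a gap in the perturbation step. You verify that $E$ contains no log canonical center of $(Y,B)$, but what is needed for $(Y,B+\sum_i f^*H_i+\delta E)$ to remain lc for small $\delta$ is that $E$ contains no log canonical center of the \emph{augmented} pair $(Y,B+\sum_i f^*H_i)$. This pair has new lc centers contained in $Y_t$ (e.g.\ $Y_t$ itself and deeper centers inside it), and these are not lc centers of $(Y,B)$. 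The paper bridges this as follows: since $(Y,B)$ and the log fiber $(Y_t,B_t)$ are dlt (condition~(ii)), the lc centers of $(Y_t,B_t)$ are exactly the irreducible components of restrictions of lc centers of $(Y,B)$, which $E$ avoids by hypothesis; then inversion of adjunction \cite[Theorem 1]{Hacon14}, together with $Y_t\not\subset E$, shows that $E$ contains no lc center of $(Y,B+\sum_i f^*H_i)$ near $Y_t$. This is precisely where the second hypothesis on $E$ (avoiding components of restrictions of lc centers) is used, and your argument as written does not use it in the right place.

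The final step you flag as the ``main technical obstacle'' is exactly where the paper again invokes \cite[Theorem 1]{Hacon14}: since $(W,B_W+\sum_i (f')^*H_i)$ is lc and the log fiber $(W_t,B_{W,t})$ is the pair obtained by adjunction to $W_t$, log canonical inversion of adjunction gives that its normalization is lc. So your worry about negative contributions to the differents is resolved by citing that theorem rather than by a bespoke analysis; no separate genericity argument for the $H_i$ is needed beyond what you already set up. With the corrected avoidance statement in the middle step and the citation in the last step, your proof matches the paper's.
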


\begin{proof}
Let $d$ be the dimension of $U$, and let $t\in U$ be a closed point.
As in the proof of Proposition~\ref{lem:nice-fibers-klt-CY-fibration}, we may work on an open neighborhood of \(t\).
Using~\ref{item:nice-fibers-lc-CY-fibration-U_1},
let $(U,H_1+\dots+H_d;t)$ be a log smooth pair
with $t$ a log canonical center,
as in the proof of Lemma~\ref{lem:nice-fibers-klt-CY-fibration}.
By the canonical bundle formula
and~\ref{item:nice-fibers-lc-CY-fibration-U_3}, the pair 
\begin{equation}\label{eq:pair-pull-4}
(Y,B+f^*H_1+\dots+f^*H_d)
\end{equation}
is log canonical over a neighborhood of \(t\),
and $Y_t$ is a log canonical center.

Since \((Y,B)\) and \((Y_t, B_t)\) are dlt, the lc centers of \((Y_t, B_t)\) are irreducible components of
restrictions of lc centers of \((Y,B)\).
So, by assumption, $E$ contains no lc centers of $(Y_t,B_t)$.
Then inversion of adjunction \cite[Theorem 1]{Hacon14} and the assumption that \(Y_t\not\subset E\) imply that \(E\) contains no
log canonical centers of $(Y,B+f^*H_1+\dots+f^*H_d)$ near \(t\).
Thus, for sufficiently small \(\delta > 0\),
the pair
\((Y,B+f^*H_1+\dots+f^*H_d+\delta E)\)
is log canonical over a neighborhood of \(t\).

Let $Y_U\dashrightarrow W$ be a sequence
of steps of the $(K_{Y_U}+B_U+\delta E)$-MMP
over $U$,
and let $f'\colon W\rightarrow U$ be the induced fibration.
Let $B_W$ (resp. $E_W)$ 
be the pushforward of $B$ (resp. $E$) to $W$.
Then, over a neighborhood of \(t\), the pair 
\((W,B_W+{f'}^*H_1+\dots+{f'}^*H_d+\delta E_W)\)
is log canonical,
and hence so is
\begin{equation}\label{eq:pair-pull-5}
(W,B_W+{f'}^*H_1+\dots+{f'}^*H_d).
\end{equation} 
Since the log fiber $(W_t,B_t)$ is
the pair obtained by adjunction
of~\eqref{eq:pair-pull-5} to $W_t$,
we conclude by \cite[Theorem 1]{Hacon14} that
its normalization is lc.
\end{proof}

\section{Toric log Calabi--Yau pairs in families}\label{sec:toric}

In this section, we prove that in a family of log Calabi--Yau pairs,
the property of being toric is constructible. Later, in Section~\ref{sec:proofs-of-main-theorems}, we will use this result to prove Theorem~\ref{thm:toric-Fano}.

\begin{theorem}\label{thm:toric-CY}
Let $f\colon (\mathcal{X},\mathcal{B}) \rightarrow T$ be a family of log
Calabi--Yau pairs of index one, and assume that \(X_t\) is of Fano type for every closed point \(t\in T\). Then, the set 
\[
T_{\rm toric} \coloneqq\{ \text{closed }t \in T \mid (\mathcal{X}_t,\mathcal{B}_t) \text{ is a toric log CY pair}
\} 
\]
is a constructible subset of $T$. 
\end{theorem}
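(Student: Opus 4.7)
My plan is to proceed by Noetherian induction on $T$: it suffices to exhibit a dense open $U \subseteq T$ on which $U_{\rm toric}$ is constructible (in fact, either $U$ or $\emptyset$), and then apply the induction hypothesis to the restricted family on $T\setminus U$. Working component by component, I may assume $T$ is smooth and irreducible. After shrinking $T$ to a dense open, I arrange that each prime component $\mathcal{B}_i$ of $\mathcal{B}$ is flat over $T$, restricts to a prime divisor on each $X_t$, and that the prime components of $B_t$ are exactly $\{\mathcal{B}_i|_{X_t}\}_i$; the locus where this fails is a proper closed subset handled by induction. I also replace $\mathcal{X}$ by a small $\mathbb{Q}$-factorialization, which does not affect toricity of fibers by Lemmas~\ref{lem:toric-contr} and~\ref{lem:toric-mod}, so that WLOG $\mathcal{X}$ is $\mathbb{Q}$-factorial.

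The crucial intermediate step is to upgrade the fiberwise Fano-type hypothesis to the assertion that $\mathcal{X}\to T$ is a Fano-type morphism after further shrinking $T$. Fix a closed $t_0\in T$. Since $X_{t_0}$ is of Fano type, there is a klt pair $(X_{t_0},\Delta_{t_0})$ with $-(K_{X_{t_0}}+\Delta_{t_0})$ ample. I spread each prime component of $\Delta_{t_0}$ to a horizontal $\mathbb{Q}$-divisor on a neighborhood of $t_0$---possible after shrinking $T$ so that the relevant class-group elements on $X_{t_0}$ extend to $\mathcal{X}$---and then openness of klt-ness and of relative ampleness in flat projective families gives a klt pair $(\mathcal{X},\Delta)$ on a dense open with $-(K_\mathcal{X}+\Delta)$ ample over $T$. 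By Lemma~\ref{lem:fano-type-num-triv-nbhd}\eqref{item:fano-type-num-triv-nbhd-part-2}, after a further shrinking the restriction map $\Cl_{\mathbb{Q}}(\mathcal{X}/T)\to\Cl_{\mathbb{Q}}(X_t)$ is injective for every closed $t\in T$.

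Letting $\Sigma_t$ denote the prime decomposition of $B_t$, the complexity
\[
c(X_t,B_t;\Sigma_t) = \dim X_t + \rho(\Sigma_t) - |\Sigma_t|
\]
is then constant over the shrunk base: $\dim X_t$ is constant by flatness, $|\Sigma_t|$ is constant by the first stratification, and injectivity identifies $\rho(\Sigma_t)=\dim_{\mathbb{Q}}\mathrm{span}\{[\mathcal{B}_i|_{X_t}]\}$ with the constant $\dim_{\mathbb{Q}}\mathrm{span}\{[\mathcal{B}_i]\}\subseteq \Cl_{\mathbb{Q}}(\mathcal{X}/T)$. Since each $(X_t,B_t)$ is a log CY pair, Theorem~\ref{thm:BMSZ-toric} (together with the fact that toric log CY pairs have complexity zero) yields that $(X_t,B_t)$ is toric if and only if this complexity vanishes; hence the toric locus on the shrunk $T$ is either all of it or empty, and Noetherian induction concludes. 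The main obstacle is the Fano-type spreading step: extending a fiberwise klt boundary to a total-space boundary is delicate precisely because Picard and class-group ranks can jump on special fibers, and controlling these jumps is what makes the Noetherian induction genuinely necessary.
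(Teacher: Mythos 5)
Your overall strategy coincides with the paper's: Noetherian induction reduces to a dense open over which the morphism is of Fano type with injective class group restrictions and the components of $\mathcal B$ restrict compatibly to the fibers, and then the complexity characterization (Theorem~\ref{thm:BMSZ-toric}) shows that the complexity of the prime decomposition of $B_t$ is constant, so the toric locus is all-or-nothing there. Your closing observation that toricity is equivalent to vanishing complexity for these fibers is correct because index one forces $B_t$ to be reduced, so the divisor $D\geq \lfloor B_t\rfloor$ produced by Theorem~\ref{thm:BMSZ-toric} must equal $B_t$; this is exactly the content of the paper's Proposition~\ref{prop:toric-log-CY-open} combined with Theorem~\ref{thm:toric-l-CY-open}.

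Two steps, however, do not work as written. First, the Fano-type spreading: you fix an arbitrary closed point $t_0$ and propose to extend the components of $\Delta_{t_0}$ to $\mathcal X$ ``after shrinking $T$ so that the relevant class-group elements on $X_{t_0}$ extend.'' Shrinking $T$ cannot help here: if $t_0$ survives the shrinking, neither $\Cl(X_{t_0})$ nor the image of the restriction map changes, and extending effective divisors (not merely classes) from a \emph{special} fiber is precisely the hard problem that the paper only solves in the $\mathbb Q$-factorial terminal setting via Lemma~\ref{lem:invariance-of-sections} and \cite[Theorem 4.1]{FHS21}. Since you only need a Fano type morphism over a dense open, the correct move is to spread out a klt boundary from the geometric generic fiber, which is of Fano type because very general closed fibers are; this is what the paper does. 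Second, the claim that each prime component $\mathcal B_i$ restricts to a prime divisor on each $X_t$ after shrinking $T$ is not achievable by shrinking alone: a component can be irreducible over $k(T)$ but geometrically reducible, in which case its restriction to every closed fiber over a dense subset is reducible, and the counts entering $|\Sigma_t|$ and $\rho(\Sigma_t)$ would be wrong. One needs the finite \'etale (Galois) base change that the paper performs, after which constructibility is preserved by taking images. With these two repairs---both compatible with your Noetherian induction---the argument goes through.
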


First, we prove that for certain nice fibrations,
the existence of a toric log fiber implies that every log fiber is toric. We will use the characterization of toric pairs proven by~\cite{BMSZ18}, which uses complexity (see Definition~\ref{defn:complexity}).

\begin{proposition}\label{prop:toric-log-CY-open} Let \(X\to T\) be a projective contraction of \(\bb Q\)-factorial normal quasi-projective varieties, 
\((X,B)\) a pair, and \(t_0\in T\) a closed point.
Assume the following conditions are satisfied:
\begin{enumerate}
\item \(\dim X_{t_0} = \dim X_t\) for \(t\) in \(T\), \label{item:condition-equidim}
\item \(B\) is reduced and contains no fibers of \(X \to T\), \label{item:condition-Q-Cartier-components}
\item
the log fibers of $(X,B)\rightarrow T$ are log canonical pairs, \label{item:condition-nearby-fibers-lc}
\item
the restriction of \(B\) to \(B|_{X_{t_0}}\) induces a bijection between prime components of $B$ and $B|_{X_{t_0}}$, \label{item:condition-restriction-bij-components}
\item
\(X \to T\) has injective class group restrictions (Definition~\ref{defn:class-group-restrictions}), and \label{item:D-num-trivial-in-nbhd}
\item \((X_{t_0}, B_{t_0})\) is a toric log CY pair. \label{item:condition-special-fiber-toric-lCY}
\end{enumerate}
Then all log fibers of \( (X,B) \to T\) are toric log CY pairs.
\end{proposition}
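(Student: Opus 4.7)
\medskip
\noindent\textbf{Proof plan.} The plan is to apply the complexity characterization of toric log Calabi--Yau pairs (Theorem~\ref{thm:BMSZ-toric}) fiberwise. Writing \(B = \sum_{i=1}^k B_i\) for the prime decomposition of \(B\), which makes sense by~\eqref{item:condition-Q-Cartier-components}, I will consider on each log fiber the decomposition \(\Sigma_t \coloneqq \sum_{i=1}^k 1 \cdot (B_i|_{X_t})\). The goal is then twofold: to verify that every log fiber \((X_t, B_t)\) is log CY (so that Theorem~\ref{thm:BMSZ-toric} applies), and to verify that the complexity \(c(X_t, B_t; \Sigma_t)\) is constantly zero on \(T\).

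To establish that each log fiber is log CY, I would argue as follows. Since \((X_{t_0}, B_{t_0})\) is a toric log CY pair, it has index one by~\cite[Theorem 8.2.3]{CLS11}, so the class of \(K_X + B\) in \(\Cl_{\qq}(X/T)\) restricts to zero in \(\Cl_{\qq}(X_{t_0})\). The injective class group restrictions hypothesis~\eqref{item:D-num-trivial-in-nbhd} then forces \(K_X + B \sim_{\qq, T} 0\), and restricting to any closed point \(t \in T\) yields \(K_{X_t} + B_t \sim_{\qq} 0\), so \((X_t, B_t)\) is log CY.

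For the complexity computation, I would first observe that each \(B_i|_{X_t}\) is reduced and that distinct \(B_i|_{X_t}\) share no prime components: otherwise \(B_t\) would have a prime component of coefficient strictly greater than one, contradicting the log canonical hypothesis~\eqref{item:condition-nearby-fibers-lc}. Hence \(|\Sigma_t| = k\), and combined with~\eqref{item:condition-equidim}, the quantity \(\dim X_t - |\Sigma_t|\) is constant in \(t\). For the Picard rank of \(\Sigma_t\), I would use that~\eqref{item:D-num-trivial-in-nbhd} provides an injection \(\Cl_{\qq}(X/T) \hookrightarrow \Cl_{\qq}(X_t)\), so the span of \(\{[B_i|_{X_t}]\}_{i=1}^k\) in \(\Cl_{\qq}(X_t)\) is mapped isomorphically onto the span of \(\{[B_i]\}_{i=1}^k\) in \(\Cl_{\qq}(X/T)\); in particular \(\rho(\Sigma_t)\) is independent of \(t\). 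Condition~\eqref{item:condition-restriction-bij-components} identifies \(\Sigma_{t_0}\) with the prime decomposition of \(B_{t_0}\), and toricity of \((X_{t_0}, B_{t_0})\) then gives \(c(X_t, B_t; \Sigma_t) = c(X_{t_0}, B_{t_0}; \Sigma_{t_0}) = 0\) for every \(t \in T\).

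To conclude, Theorem~\ref{thm:BMSZ-toric} applied to each log fiber produces a divisor \(D_t \geq \lfloor B_t \rfloor = B_t\) such that \((X_t, D_t)\) is toric log CY and every prime component of \(D_t\) is supported on some \(B_i|_{X_t}\); combined with \(D_t \geq B_t\) and the fact that both divisors are reduced, this forces \(D_t = B_t\), so \((X_t, B_t)\) is itself toric log CY. The most delicate point is the constancy of \(\rho(\Sigma_t)\) across fibers, which is precisely where the injective class group restrictions hypothesis~\eqref{item:D-num-trivial-in-nbhd} is indispensable; the bijection condition~\eqref{item:condition-restriction-bij-components} is then used to anchor this constant value to the combinatorics of the toric fiber at \(t_0\).
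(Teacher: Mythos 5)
Your proof is correct and follows essentially the same route as the paper's: deduce that all log fibers are log CY from the injective class group restrictions, then show the complexity of the fiberwise decomposition induced by the prime components of \(B\) is identically zero and invoke Theorem~\ref{thm:BMSZ-toric}. You are in fact slightly more careful than the paper at two points (checking that the restrictions \(B_i|_{X_t}\) are reduced with no shared components via the lc hypothesis, and pinning down \(D_t = B_t\) at the end), but the argument is the same.
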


\begin{proof}
By assumption~\eqref{item:condition-special-fiber-toric-lCY}, the restriction of $K_X+B$ to $X_{t_0}$ is $\qq$-linearly trivial.
By~\eqref{item:D-num-trivial-in-nbhd}, we conclude that $K_X+B$ is $\qq$-linearly trivial on every closed fiber $X_t$.
In particular, by~\eqref{item:condition-Q-Cartier-components} and~\eqref{item:condition-nearby-fibers-lc}, the log fiber $(X_t,B_t)$ is a log CY pair for every $t\in T$.

Now write \(b\) for the number of prime components \(B_{t_0,1},\ldots,B_{t_0,b}\) of \(B_{t_0}\), and consider the decomposition \(\Sigma_{t_0} \coloneqq \sum_{i=1}^b B_{t_0, i}\) of \(B_{t_0}\). Since \((X_{t_0}, B_{t_0})\) is a toric pair, we have the equality \(b = \dim X_{t_0} + \rho(\Sigma_{t_0})\) (see, e.g., \cite[page 2]{BMSZ18}).
Now, for each \(1\leq i \leq b\), let \(\Gamma_i\) be the prime component of \(B\) restricting to \(B_{t_0, i}\) (using~\eqref{item:condition-restriction-bij-components}). Let \(t\in T\) be a closed point,
and consider the decomposition of \(B_t\) given by \(\Sigma_t \coloneqq \sum_{i=1}^b \Gamma_i|_{X_t} = B_t \).
Then \(|\Sigma_t| = b\), \(\dim X_t = \dim X_{t_0}\) by~\eqref{item:condition-equidim}, and \(\rho(\Sigma_t)  = \rho(\Sigma_{t_0})\) by ~\eqref{item:D-num-trivial-in-nbhd}, so  

\[
c(X_t, B_t; \Sigma_t) = c(X_{t_0}, B_{t_0}; \Sigma_{t_0}) = 0.
\] 

Hence \((X_t, B_t) = (X_t, \lfloor B_t \rfloor)\) is a toric log CY pair by Theorem~\ref{thm:BMSZ-toric}.
\end{proof}

\begin{theorem}\label{thm:toric-l-CY-open} Let \((X,B)\to T\) be a log Calabi--Yau fibration of index one over \(T\), i.e., $K_X+B\sim_T 0$. Assume \(X\to T\) is Fano type.
Then there is an open subset \(U\) of \(T\) such that if the log fiber \((X_t,B_t)\) is a toric CY pair for some closed \(t\in U\), then every log fiber over 
$U$ is a toric log CY pair.
\end{theorem}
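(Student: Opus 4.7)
The plan is to shrink $T$ to a dense open subset $U$ over which the five structural hypotheses (1)--(5) of Proposition~\ref{prop:toric-log-CY-open} all hold. Once this is arranged, whenever any log fiber over $U$ happens to be toric (hypothesis (6)), that proposition immediately forces every log fiber over $U$ to be toric.

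First, I would reduce to the case where $X$ is $\mathbb{Q}$-factorial. Since $X\to T$ is of Fano type, \cite{BCHM} provides a small $\mathbb{Q}$-factorialization $\pi\colon X'\to X$ over $T$. The log pullback $(X',B')$ is again a log CY fibration of index one, and $X'\to T$ remains Fano type by Lemma~\ref{lem:FT1}. After shrinking $T$ so that $\pi$ restricts to small birational maps on every fiber, Lemma~\ref{lem:toric-mod} applies in both directions fiberwise (small maps contract no divisors, so the non-canonical-place hypothesis is vacuous), giving that $(X_t,B_t)$ is toric if and only if $(X'_t,B'_t)$ is toric. So I may replace $X$ by $X'$ and assume $X$ is $\mathbb{Q}$-factorial.

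Next, I would shrink $T$ to an open $U$ ensuring each remaining hypothesis of Proposition~\ref{prop:toric-log-CY-open}. Equidimensionality (hypothesis (1)) follows from generic flatness. For (2), $B$ is reduced by Lemma~\ref{lem:index-one}; I discard any prime component of $B$ that fails to dominate $T$ by removing its image from $T$, so on the resulting open set $B$ contains no fiber. For (3), I use the canonical bundle formula as in the setup of Lemma~\ref{lem:nice-fibers-lc-CY-fibration}: over the complement of the support of the discriminant and moduli divisors on $T$, every log fiber of $(X,B)\to T$ is log canonical. For (5), I apply Lemma~\ref{lem:fano-type-num-triv-nbhd}\eqref{item:fano-type-num-triv-nbhd-part-2} to obtain an open subset over which $X\to T$ has injective class group restrictions. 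For (4), I use that in a flat family each horizontal prime component $B_i$ of $B$ has geometrically irreducible restriction $B_i|_{X_t}$ over a dense open, and distinct $B_i$ have distinct restrictions over a dense open.

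The main obstacle is verifying (4): one must simultaneously control the irreducibility of each $B_i|_{X_t}$ and guarantee distinctness of $\{B_i|_{X_t}\}$ as prime divisors on $X_t$, which requires a careful generic-constancy argument rather than a single citation (a flattening/Stein-factorization of each $B_i\to T$ should suffice). Once all of (1)--(5) are arranged on $U$, the hypothesis ``some log fiber over $U$ is toric'' supplies hypothesis (6) at the chosen point, and Proposition~\ref{prop:toric-log-CY-open} completes the argument.
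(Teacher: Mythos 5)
Your overall strategy is the same as the paper's: pass to a $\mathbb{Q}$-factorial birational model, shrink $T$ so that hypotheses (1)--(5) of Proposition~\ref{prop:toric-log-CY-open} hold, and then let that proposition propagate toricity from one log fiber to all of them. The choice of a small $\mathbb{Q}$-factorialization rather than the paper's dlt modification is a benign variation; the transfer of toricity across either model is handled by Lemmas~\ref{lem:toric-contr} and~\ref{lem:toric-mod} exactly as you say.

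However, there is a genuine gap in your treatment of hypothesis (4), and it is not merely a matter of ``a careful generic-constancy argument.'' Your claim that each horizontal prime component $B_i$ of $B$ has irreducible restriction $B_i|_{X_t}$ over a dense open subset of $T$ is false: the generic fiber of $B_i \to T$ is irreducible over $k(T)$ but need not be \emph{geometrically} irreducible, and when it is not, $B_i|_{X_t}$ is reducible for \emph{every} closed $t$ in a dense open set, so no shrinking of $T$ can repair condition (4). (Think of an irreducible divisor whose Stein factorization over $T$ is a nontrivial finite cover.) The Stein factorization you mention is indeed the right diagnostic, but the cure it dictates is to base-change the entire family along a finite \'etale (Galois) cover $T' \to T$ that splits these components --- this is precisely the first step of the paper's proof --- then verify the hypotheses and apply Proposition~\ref{prop:toric-log-CY-open} over an open $U' \subseteq T'$, and finally descend to $T$ using that the fibers over a point of $T'$ are isomorphic to the fibers over its image in $T$ because the cover is \'etale. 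Both the base change and the descent are absent from your write-up, and without them the argument does not close. (A smaller point: Lemma~\ref{lem:index-one} does not give reducedness of $B$ here, since its hypothesis is log rationality rather than $K_X+B\sim_T 0$; one should instead observe that if $B$ has a non-reduced component surviving on the fibers, the statement is vacuous over the relevant open set.)
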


\begin{proof}
    Let \(T'\to T\) be a finite Galois cover such that for the pullback \((X_{T'}, B_{T'})\), the prime components of \(B_{T'}\) restrict to prime divisors on general fibers of \(X_{T'}\to T'\). Let \((X'', B'')\to (X_{T'}, B_{T'})\) be a \(\bb Q\)-factorial dlt modification, which exists by \cite[Theorem 3.1]{KollarKovacs10}. We get a commutative diagram
    \[\xymatrix{(X,B) \ar[d] & (X_{T'}, B_{T'}) \ar[d] \ar[l] & (X'',B'')  \ar[l] \ar[ld]^-{\psi} \\ T & T' \ar[l]_-{\pi}}.\]
    Let \(U'\subset T'\) be a nonempty open subset over which \((X'',B'') \to U'\) satisfies conditions~\eqref{item:condition-equidim}, \eqref{item:condition-Q-Cartier-components}, and~\eqref{item:condition-restriction-bij-components} of Proposition~\ref{prop:toric-log-CY-open}. By shrinking \(U'\) if necessary, we may further assume that \((X'',B'') \to (X_{T'}, B_{T'})\) induces dlt modifications on log fibers over \(U'\).
    Lemma~\ref{lem:FT2} shows that \(X''|_{U'} \to U'\) is Fano type. We may assume by Lemma~\ref{lem:fano-type-num-triv-nbhd}\eqref{item:fano-type-num-triv-nbhd-part-2} that Proposition~\ref{prop:toric-log-CY-open}\eqref{item:D-num-trivial-in-nbhd} holds over \(U'\).
    Let \(U\subset T\) be an open subset whose preimage in \(T'\) is contained in \(U'\). We may assume \(\pi\) is \'etale over \(U\).
    Then \(\psi\colon (X'', B'') \to U'\) satisfies conditions \eqref{item:condition-equidim}--\eqref{item:D-num-trivial-in-nbhd} of Proposition~\ref{prop:toric-log-CY-open}.
    
    Now assume \((X_t, B_t)\) is toric log CY for some closed \(t\in U\), and let \(t' \in T'\) be a preimage of \(t\). Then \((X'_{t'}, B'_{t'})\) is toric log CY. A dlt modification of a toric log CY pair is also toric log CY by Lemma~\ref{lem:toric-mod}. Hence, the log CY pair \((X''_{t'}, B''_{t'})\) is toric.
    
    All log fibers of \(\psi\) over \(U'\) are toric log CY pairs by Proposition~\ref{prop:toric-log-CY-open}, so all fibers of \((X_{U'}, B_{U'}) \to U'\) are toric by Lemma~\ref{lem:toric-contr}. Hence, all log fibers of \((X,B) \to T\) over \(U\) are toric.
\end{proof}

\begin{proof}[Proof of Theorem~\ref{thm:toric-CY}]   
    We proceed by induction on \(\dim T\). If \(\dim T = 0\) there is nothing to show. Next, assume \(\dim T \geq 1\).
    As the geometric generic fiber is an index one log Calabi--Yau pair on a Fano type variety, we may take a dominant finite \'etale morphism \(T' \to T\) such that the base change \((\mathcal X_{T'}, \mathcal B_{T'}) \to T'\) is a log CY fibration and \(\mathcal X_{T'} \to T'\) is a Fano type morphism.
    Let \(U' \subset T'\) be the open subset of Theorem~\ref{thm:toric-l-CY-open}, and let \(U \subset T\) be its image.
    Then \(T'_{toric} \cap U'\) is a constructible subset of \(T'\) by Theorem~\ref{thm:toric-l-CY-open}, so its image \(T_{toric} \cap U\) in \(T\) is also constructible. Since \(T_{toric} \cap (T\setminus U)\) is constructible by induction, this completes the proof.
\end{proof}

We finish this section by proving a more general version of Theorem~\ref{thm:toric-Fano}.
Even though Theorem~\ref{thm:toric-CY} requires a Fano type assumption, we retain the theorem statement and the above proof because it will be useful in proving Theorem~\ref{thm:cluster-Fano}.
The proof of the following theorem was kindly suggested to us by J\'anos Koll\'ar and Burt Totaro. 

\begin{theorem}\label{thm:toric-constructible-any}
Let $\mathcal{X}\rightarrow T$ be a family of varieties (over $\mathbb{C}$).
Then, the set 
\[ 
T_{\rm toric}\coloneqq\{ \text{closed $t\in T$} \mid \mathcal{X}_t \text{ is a toric variety} \} 
\]
is a constructible subset of $T$. 
\end{theorem}

\begin{proof}
First, we assume that the closed fibers of $\mathcal{X}\rightarrow T$ are smooth varieties.
Then, there is a dense open subset \(U\subset T\) such that \(U\) is smooth and the morphism \(\mathcal X_U\to U\) is smooth, so Ehresmann’s theorem implies the closed fibers \(\mathcal X_t\) have constant Betti numbers over \(U\).
We may assume that the first Betti number of each fiber over \(U\) is \(0\), otherwise none of the fibers are toric (see, e.g., \cite[Theorem 12.1.10]{CLS11}).
By upper semicontinuity of \(h^0(\mathcal X, \mathcal T_{\mathcal X_t})\), we may restrict to a dense open subset \(V\subset U\) over which \(h^0(\mathcal X, \mathcal T_{\mathcal X_t})\) is constant.
Then \(\Aut^0(\mathcal X_V/V) \to V\) is a smooth family of algebraic groups.
Using the assumption that \(b_1(\mathcal X_t) = 0\),
we may apply
\cite[Th\'eor\`eme 1.7(a)]{SGA3XII} to conclude that the rank of \(\Aut^0(\mathcal X_t)\) is lower semicontinuous over \(V\).
Thus, we may further restrict to a dense open \(W\subset V\)
over which \(\Aut^0(\mathcal X_t)\) has constant rank.
Over \(W\), either all or none of the fibers are toric, since \(\mathcal X_t\) is toric if and only if the rank of \(\Aut^0(\mathcal X_t)\) is equal to \(\dim\mathcal X_t\). Then, Noetherian induction completes the proof in the case that the closed
fibers of $\mathcal{X}\rightarrow T$ are smooth varieties.

Now, we let $\mathcal{X}\rightarrow T$ be any family of varieties.
Let $\mathcal{Y}\rightarrow \mathcal{X}$ be a functorial resolution of
singularities (see, e.g.,~\cite[Theorem 3.36 and Theorem 3.35(5)]{Kol07}).
Then, for $t\in T$ general, we have an induced functorial resolution
$\mathcal{Y}_t\rightarrow \mathcal{X}_t$ (see, e.g.,~\cite[3.34.4]{Kol07}).
So there is a dense open subset \(U\) such that $\mathcal{Y}_t\rightarrow \mathcal{X}_t$ is a functorial
resolution of singularities for each closed point $t\in U$.
Therefore, the closed fibers of the morphism $\mathcal{Y}|_U\rightarrow U$ are smooth varieties.
By the first paragraph, we conclude that
\[
U_{\rm toric}(\mathcal{Y})\coloneqq\{ \text{closed $t\in U$} \mid \mathcal{Y}_t \text{ is a toric variety}\} 
\]
is a constructible subset of $U$.
Functorial resolutions are equivariant by~\cite[Proposition 3.9.1]{Kol07}; therefore, for \(t\in U\),
$\mathcal{Y}_t$ is toric whenever $\mathcal{X}_t$ is toric.
On the other hand, image of toric varieties via projective birational morphism are also toric by Lemma~\ref{lem:toric-contr}.
Therefore, for \(t\in U\), $\mathcal{X}_t$ is toric whenever $\mathcal{Y}_t$ is toric.
We conclude that $U_{\rm toric}(\mathcal{Y})=T_{\rm toric} \cap U$. 
Then, Noetherian induction completes the proof.
\end{proof} 

\section{Cluster type log Calabi--Yau pairs in families}\label{sec:cluster-type}

In this section, we prove that in a family of terminal log Calabi--Yau pairs,
the property
of being cluster type is constructible.
In Section~\ref{sec:proofs-of-main-theorems}, we will use this to prove Theorem~\ref{thm:cluster-Fano}.

\begin{theorem}\label{thm:CY-cluster}
Let $f\colon (\mathcal{X},\mathcal{B})\rightarrow T$ 
be a family of log Calabi--Yau pairs of index one,
and assume that $X_t$ is a $\qq$-factorial terminal Fano variety for every closed point $t\in T$.
Then, the set 
\[
T_{\rm cluster}\coloneqq
\{ \text{closed }t\in T \mid (\mathcal{X}_t,\mathcal{B}_t)
\text{ is a log CY pair of cluster type}\}
\]
is a constructible subset of $T$.
\end{theorem}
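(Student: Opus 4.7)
The plan is to parallel the proof of Theorem~\ref{thm:toric-CY}. By Noetherian induction on $\dim T$ and a finite étale base change (after which Lemma~\ref{lem:fano-type-num-triv-nbhd} provides injective class group restrictions on a dense open subset), it suffices to fix a closed point $t_0\in T$ with $(\cal X_{t_0},\cal B_{t_0})$ of cluster type and to exhibit an open $U\ni t_0$ over which every log fiber is of cluster type. I would exploit Lemma~\ref{lem:dlt-mod-cluster} (which characterizes cluster type via the existence of a $\qq$-factorial dlt modification admitting a crepant birational contraction to $(\pp^n,\Sigma^n)$) by promoting such a modification from the fiber at $t_0$ to the entire family, running a relative MMP that produces a family of toric log CY pairs on an open set, and then applying Theorem~\ref{thm:toric-CY} together with Lemma~\ref{lem:cluster-type-descend-dlt} to transfer cluster type to all nearby fibers.

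More concretely, Lemma~\ref{lem:dlt-mod-cluster} produces a $\qq$-factorial dlt modification $\phi_{t_0}\colon (Y_{t_0},B_{Y_{t_0}})\to (\cal X_{t_0},\cal B_{t_0})$ with a crepant birational contraction $\pi_{t_0}\colon (Y_{t_0},B_{Y_{t_0}})\dashrightarrow (\pp^n,\Sigma^n)$ such that $(Y_{t_0},B_{Y_{t_0}}+\epsilon E_{t_0})$ is dlt for small $\epsilon>0$, where $E_{t_0}\coloneqq {\rm Ex}(\pi_{t_0})\setminus {\rm supp}(B_{Y_{t_0}})$. The divisors extracted by $\phi_{t_0}$ are log canonical places of $(\cal X_{t_0},\cal B_{t_0})$; by the canonical bundle formula applied to $(\cal X,\cal B+f^*H_1+\cdots+f^*H_d)$ for general hyperplanes $H_i$ through $t_0$ (as in the proof of Lemma~\ref{lem:nice-fibers-klt-CY-fibration}), these correspond to log canonical places of the augmented pair in the total space. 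Extracting them via \cite[Theorem 3.1]{KollarKovacs10} yields, over a neighborhood of $t_0$, a family-wide $\qq$-factorial dlt modification $\phi\colon (\cal Y,\cal B_{\cal Y})\to (\cal X,\cal B)$ whose fiber at $t_0$ recovers $(Y_{t_0},B_{Y_{t_0}})$ up to a further dlt blow-up. By Lemmas~\ref{lem:FT2} and~\ref{lem:injective-class-groups}, after shrinking $T$ the morphism $\cal Y\to T$ is of Fano type with injective class group restrictions. The $\qq$-factorial terminal Fano hypothesis on each $\cal X_t$ then permits Lemma~\ref{lem:comb-terminal-dlt} to pass to a higher dlt modification on which a perturbation $(\cal Y,(1-\eta)\cal B_{\cal Y}+\eta\Delta_{\cal Y})$ is terminal, so that Lemma~\ref{lem:nice-fibers-klt-CY-fibration} becomes applicable.

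Next, I would extend $E_{t_0}$ to an effective $\qq$-Cartier divisor $\cal E$ on $\cal Y$, shrink $T$ so that the hypotheses of Lemmas~\ref{lem:nice-fibers-klt-CY-fibration} and~\ref{lem:nice-fibers-lc-CY-fibration} are satisfied, and run the relative $(K_{\cal Y}+\cal B_{\cal Y}+\delta \cal E)$-MMP over $T$ for small $\delta>0$ (which, since $K_{\cal Y}+\cal B_{\cal Y}\sim_{\qq,T}0$, is the $\delta\cal E$-MMP). On the central fiber this MMP contracts $E_{t_0}$; any further steps only contract log canonical places of $(Y_{t_0},B_{Y_{t_0}})$, because $\pi_{t_0}$ was already a birational contraction to $(\pp^n,\Sigma^n)$. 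By Lemma~\ref{lem:toric-mod} the resulting fiber $(\cal Z_{t_0},\cal B_{\cal Z,t_0})$ is then a toric log CY pair. Lemmas~\ref{lem:nice-fibers-klt-CY-fibration} and~\ref{lem:nice-fibers-lc-CY-fibration} guarantee that the output family $(\cal Z,\cal B_{\cal Z})\to T$ has normal fibers with the required singularities and that each induced map $Y_t\dashrightarrow Z_t$ is a birational contraction. Applying Theorem~\ref{thm:toric-CY} to $(\cal Z,\cal B_{\cal Z})\to T$ yields an open $V\ni t_0$ over which every log fiber of $\cal Z$ is toric log CY; for $t\in V$, taking a common higher dlt modification of $(Z_t,B_{Z_t})$ and $(\pp^n,\Sigma^n)$ and composing with $(Y_t,B_{Y_t})\to (\cal X_t,\cal B_t)$ then gives, via Lemma~\ref{lem:cluster-type-descend-dlt}, the crepant birational contraction from $(\pp^n,\Sigma^n)$ needed to conclude that $(\cal X_t,\cal B_t)$ is of cluster type.

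The hardest part will be verifying that the relative MMP specializes correctly to the fiber at $t_0$, and in particular that the output over $t_0$ is the toric log CY pair arising from contracting $E_{t_0}$. Controlling this is precisely the role of Lemmas~\ref{lem:nice-fibers-klt-CY-fibration} and~\ref{lem:nice-fibers-lc-CY-fibration}, whose hypotheses force the $\qq$-factorial terminal Fano assumption on the fibers: via Lemma~\ref{lem:comb-terminal-dlt} this produces the terminal perturbation on which Lemma~\ref{lem:nice-fibers-klt-CY-fibration} depends, and via Lemma~\ref{lem:div-combination} it guarantees that the non-terminal places of the perturbed pair are log canonical places of the original log CY pair, so that the family MMP realizes the intended crepant contraction fiber by fiber.
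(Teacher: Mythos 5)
Your overall strategy matches the paper's: spread the fiberwise dlt modification and the exceptional divisor $E_{t_0}$ out over the family, contract the spread-out divisor by a relative MMP, recognize the output as a family of toric log CY pairs, and descend cluster type to nearby fibers. However, the step you dispose of in one line --- ``I would extend $E_{t_0}$ to an effective $\qq$-Cartier divisor $\mathcal E$ on $\mathcal Y$'' --- is the crux of the whole proof and is not justified by anything you cite. Injective class group restrictions (Lemmas~\ref{lem:fano-type-num-triv-nbhd} and~\ref{lem:injective-class-groups}) go in the wrong direction: to extend $E_{t_0}$ you need (i) \emph{surjectivity} of $\Cl_{\bb Q}(\mathcal Y/T)\to \Cl_{\bb Q}(Y_{t_0})$, so that the class of $E_{t_0}$ lifts to the family, and (ii) surjectivity of $H^0(\mathcal Y,\mathcal O_{\mathcal Y}(mD))\to H^0(Y_{t_0},\mathcal O_{Y_{t_0}}(mD|_{Y_{t_0}}))$, so that an effective section restricting to $mE_{t_0}$ exists. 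The paper obtains (i) from Totaro's theorem \cite[Theorem 4.1]{FHS21} after a finite \'etale cover --- this is precisely where the $\qq$-factorial terminal hypothesis is used, contrary to your closing claim that terminality only enters through Lemmas~\ref{lem:div-combination} and~\ref{lem:comb-terminal-dlt} (see Remark~\ref{rem:cluster-type-singularities-assumptions}) --- and obtains (ii) from Lemma~\ref{lem:invariance-of-sections}, an extension of \cite[Lemma 2.7]{HX15} to arbitrary dlt modifications, packaged as the ``extension of prime divisors property'' in Proposition~\ref{prop:nice-FT-fibration}. Without these inputs your $\mathcal E$ need not exist, and the relative MMP you want to run has nothing to contract.

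Two further problems. First, your mechanism for spreading out $\phi_{t_0}$ --- extracting log canonical places of the augmented pair $(\mathcal X,\mathcal B+f^*H_1+\cdots+f^*H_d)$ --- produces exceptional divisors whose centers are contained in the fiber $\mathcal X_{t_0}$, i.e., vertical divisors; the resulting modification does not have $(Y_{t_0},B_{Y_{t_0}})$ as its fiber over $t_0$. What is needed are \emph{horizontal} lc places restricting to the ones extracted by $\phi_{t_0}$. The paper arranges this by passing to an \'etale cover on which the dual complex of a relative dlt modification $(Z,B_Z)$ restricts to the dual complexes of the log fibers (\cite[Lemma 30]{KX16}), using Lemma~\ref{lem:nice-dlt-mod-wrt-E} to realize $\phi_{t_0}$ as a composition of blow-ups of strata, and then blowing up the corresponding horizontal strata of $(Z,B_Z)$. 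Second, Theorem~\ref{thm:toric-CY} only gives that the toric locus of $(\mathcal Z,\mathcal B_{\mathcal Z})\to T$ is \emph{constructible}; a constructible set containing $t_0$ need not contain a neighborhood of $t_0$. You must instead verify the hypotheses of Proposition~\ref{prop:toric-log-CY-open} for the output family (normal equidimensional fibers, reduced boundary restricting bijectively on prime components, injective class group restrictions, lc log fibers), which is exactly what the checks via Lemmas~\ref{lem:nice-fibers-klt-CY-fibration} and~\ref{lem:nice-fibers-lc-CY-fibration} are for in the paper's argument.
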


To prove Theorem~\ref{thm:CY-cluster}, we will use the characterization of cluster type pairs in Lemma~\ref{lem:dlt-mod-cluster},
which shows that \((X,B)\) is cluster type if and only if there is a \(\bb Q\)-factorial dlt modification \((Y, B_Y)\) that admits a crepant birational contraction \(\phi\colon (Y, B_Y) \dashrightarrow (\bb P^n, \Sigma^n)\).

Theorem~\ref{thm:CY-cluster} will follow from Theorem~\ref{thm:cluster-type-stabilize-open}. The idea of the proof of Theorem~\ref{thm:cluster-type-stabilize-open} is as follows.
Assume the log fiber over \(t_0\) is cluster type.
Then, after taking an appropriate \'etale cover of \(T\)
and suitable higher dlt modifications of \((Y_{t_0}, B_{Y,t_0})\), we may spread out \(\phi_{t_0}\) to a family of crepant birational contractions \(\phi_t \colon (Y_t, B_t) \dashrightarrow (W_t, B_{W, t})\) over an open neighborhood of a preimage of \(t_0\). Using Proposition~\ref{prop:toric-log-CY-open}, we show that the log CY pairs \((W_t, B_{W, t})\) are toric. Hence, the log fibers of \((\mathcal{X},\mathcal{B})\) are cluster type over an open neighborhood of \(t_0\).

In order to spread out the birational contraction \(\phi_{t_0}\), we will need to extend certain divisors over a family.
An important ingredient for this is the following lemma
on invariance of sections
for suitable dlt modifications of Fano type morphisms.
This lemma extends \cite[Lemma 2.7]{HX15}.

\begin{lemma}\label{lem:invariance-of-sections}
Let $X\rightarrow T$ be a Fano type morphism of \(\bb Q\)-factorial varieties.
Let $(X,B)$ be a
dlt pair that is log CY of index one over $T$.
Assume that for every closed point $t\in T$, each proper stratum of $(X,B)$ restricts to a proper stratum of $(X_t,B_t)$.
Assume there is a boundary $\Delta$ for which $(X,\Delta)$ is terminal
and log CY over $T$.
Then, there exists a non-empty affine open subset $U\subseteq T$
satisfying the following condition:
for any dlt modification $\pi \colon (Y,B_Y)\rightarrow (X,B)$,
any Cartier divisor $D$ on $Y$, and every closed point $t\in U$,
we have a surjective homomorphism
\[
H^0(Y,\mathcal{O}_Y(mD)) \twoheadrightarrow H^0(Y_t,\mathcal{O}_{Y_t}(mD|_{Y_t})),
\]
for every $m$ sufficiently divisible (independent of \(t\)).
\end{lemma}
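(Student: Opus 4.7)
The plan is to adapt the invariance-of-sections argument of \cite[Lemma~2.7]{HX15} to the relative log Calabi--Yau setting, where the terminal reference pair $(X,\Delta)$ and the index-one hypothesis together supply the positivity typically needed in such arguments. First, I would shrink $T$ to a nonempty affine open $U$ on which: (i) $U$ is smooth; (ii) $K_X+B\sim_U 0$ as integral divisors (using the index-one hypothesis and affine shrinking to trivialize a twist from $T$); (iii) $X_U\to U$ has injective class group restrictions (Lemma~\ref{lem:fano-type-num-triv-nbhd}); (iv) the support of the boundary and moduli part of the canonical bundle formula for $(X,B)\to T$ (cf.~\cite[Proposition~4.16]{Fil18}) avoids $U$; and (v) the log fibers of $(X,\Delta)$ remain terminal over $U$.

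Fix an arbitrary dlt modification $\pi\colon(Y,B_Y)\to(X,B)$, a Cartier divisor $D$ on $Y$, and a closed point $t\in U$. By Lemma~\ref{lem:comb-terminal-dlt}, there is a further $\qq$-factorial dlt modification $\tau\colon(W,B_W)\to(Y,B_Y)$ on which the log pullback $(W,\Xi_W)$ of $(X,(1-\epsilon)B+\epsilon\Delta)$ is terminal for sufficiently small $\epsilon>0$. This pair is log CY over $T$, and $g\coloneqq f\circ\pi\circ\tau\colon W\to T$ is of Fano type by Lemma~\ref{lem:FT2}. Since $D$ is Cartier and $\tau$ is a proper birational morphism of normal varieties, $\tau_*\cal O_W(m\tau^*D)=\cal O_Y(mD)$, so it suffices to prove the surjection on $W$.

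To localize at $t$, take general hyperplanes $H_1,\dots,H_d\subset U$ through $t$, with $d=\dim T$. As in the proofs of Lemmas~\ref{lem:nice-fibers-klt-CY-fibration} and~\ref{lem:nice-fibers-lc-CY-fibration}, the canonical bundle formula implies that $(W,B_W+\sum g^*H_i)$ is log canonical near $W_t$, with $W_t$ a minimal log canonical center whose adjunction pair equals $(W_t,B_{W_t})$ by the strata-restriction hypothesis. Using the trivialization $K_W+B_W\sim_U 0$ to rewrite $m\tau^*D\sim K_W+(B_W+m\tau^*D)$, I would then apply relative Koll\'ar-type vanishing on the dlt pair $(W,B_W+\sum g^*H_i)$ together with an inductive descent along the hyperplanes $g^*H_i$, one codimension at a time, to obtain the required surjection onto $W_t$ for all $m$ sufficiently divisible.

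\textbf{Main obstacle.} Since $D$ is arbitrary Cartier, $mD$ has no a priori positivity, and standard Kawamata--Viehweg vanishing does not apply directly. The remedy combines the two hypotheses of the lemma: the index-one condition lets one absorb $mD$ into $K+B$, while the terminal reference pair $(X,\Delta)$ is precisely what Lemma~\ref{lem:comb-terminal-dlt} needs in order to produce a klt higher model on which relative vanishing can be run. The technical heart of the argument is to ensure that the inductive descent along the $g^*H_i$ is compatible with adjunction to the deep (codimension $d$) log canonical center $W_t$, and that the choice of $U$ makes the required divisibility of $m$ depend only on the pair $(Y,D)$ and not on $t$.
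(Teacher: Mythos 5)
There is a genuine gap at the step you yourself flag as the ``main obstacle,'' and the remedy you propose does not close it. Rewriting $m\tau^*D\sim K_W+(B_W+m\tau^*D)$ using $K_W+B_W\sim_U 0$ produces no positivity: $B_W+m\tau^*D$ is not a boundary (its coefficients are unbounded and of no fixed sign), and $K_W+B_W+m\tau^*D\sim_U m\tau^*D$ is still an arbitrary Cartier divisor, so no form of relative Kawamata--Viehweg or Koll\'ar vanishing applies to it. The terminal higher model from Lemma~\ref{lem:comb-terminal-dlt} and the hyperplane-cut/adjunction localization at $t$ are both compatible with the paper's strategy, but by themselves they cannot yield the surjection for a divisor with no positivity, and the inductive descent along the $g^*H_i$ never gets off the ground.

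The missing idea is how the Fano type hypothesis is actually used to manufacture positivity. The paper writes $\Delta\sim_{T,\qq}\Delta'+A$ with $A$ relatively ample and $(X,\Delta')$ terminal, and, crucially, decomposes the given divisor as $D\sim_{\qq}\pi^*D_X+E_Y$ with $D_X$ a $\qq$-Cartier divisor on the $\qq$-factorial $X$ and $E_Y$ $\pi$-exceptional. One then scales by a small $\delta>0$ so that $\epsilon A+\delta D_X$ is still ample over $T$ (absorbing the pullback part of $D$ into the ample divisor) and so that $\delta\phi^*E_Y$ can be absorbed into a terminal boundary $D'_\epsilon$ on a suitable log resolution $X'$ (this is where Lemma~\ref{lem:div-combination} and blow-ups of strata enter, to arrange that $(X',D'_{\epsilon,\delta})$ is terminal). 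The divisor $m(K_{X'}+D'_{\epsilon,\delta}+\psi^*A_{\epsilon,\delta})$ is then adjoint-type with an ample twist and a terminal pair, so the deformation invariance of sections \cite[Theorem 1.8]{HMX13} applies fiberwise; the surjection for $\delta m D$ on $Y$ is recovered by pushing sections forward along the birational contraction $\phi\colon X'\dashrightarrow Y$ after correcting by the $\phi$-exceptional divisor $mF_\epsilon$. Without the decomposition of $D$ and the use of the relatively ample $A$, your argument has no source of positivity and the surjectivity cannot be established; you would also still need to address why the required divisibility of $m$ is independent of $t$, which in the paper comes for free from \cite{HMX13}.
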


\begin{proof}
Write $\Delta\sim_{T,\qq}\Delta'+A$
where $A$ is ample over $T$
and the pair $(X,\Delta')$ is terminal.
By shrinking \(T\), we may assume that no fibers of \(f\) are contained in \(\Delta\), \(\Delta'\), or \(A\).
Let $\psi\colon X'\rightarrow X$ be a log resolution
of $(X,{\rm supp}(\Delta)+{\rm supp}(B))$. By the assumption on the strata of $(X,B)$, after shrinking $T$, we may assume that \(\Pic T = 0\) and that the following hold for every closed point \(t\in T\):
\begin{enumerate}
    \item $X'_t\rightarrow X_t$
is a log resolution of
$(X_t, {\rm supp}(\Delta|_{X_t})+{\rm supp}(B|_{X_t}))$, and \label{item:invariance-of-sections-log-resol}
    \item every dlt modification
$(Y,B_Y)\rightarrow (X,B)$ restricts
to a dlt modification 
$(Y_t,B_{Y_t})\rightarrow (X_t,B_t)$. \label{item:invariance-of-sections-dlt}
\end{enumerate}

Now let \(\pi \colon (Y, B_Y) \to (X,B)\) be a dlt modification,
and let $D$ be a Cartier divisor on $Y$. We may write $D\sim_\qq \pi^*(D_X)+E_Y$
where $D_X$ is a $\qq$-Cartier divisor on $X$
and $E_Y$ is $\pi$-exceptional.
The log canonical places of \((X,B)\) are horizontal over \(T\) by~\eqref{item:invariance-of-sections-dlt}, so
after replacing $X'$ by a further blow-up along log canonical
places of $(X,B)$, we obtain a commutative diagram as follows, where
$\phi$ is a birational contraction and \(\psi\) still satisfies properties~\eqref{item:invariance-of-sections-log-resol} and~\eqref{item:invariance-of-sections-dlt} (without further shrinking \(T\)):
\[
\xymatrix{
(Y,B_Y) \ar[d]_-{\pi} &  \\ 
(X,B) \ar[d] & X' \ar[ld]\ar[l]^-{\psi}
 \ar@{-->}[ul]_-{\phi}\\ 
T. 
}
\]

Fix a rational number $\epsilon>0$ small enough so that all $\pi$-exceptional divisors are non-canonical places of $(X,\Gamma_\epsilon \coloneqq (1-\epsilon)B+\epsilon \Delta')$.
Write
\[
\psi^*(K_X+\Gamma_{\epsilon}) + F_\epsilon = K_{X'}+D'_\epsilon
\]
where $D'_\epsilon$ and $F_\epsilon$ are effective snc divisors 
without common components, and \(F_\epsilon\) is \(\psi\)-exceptional.
Since every $\pi$-exceptional divisor
is a non-canonical place of $(X,\Gamma_\epsilon)$,
we may assume that $F_\epsilon$ is $\phi$-exceptional.
Note that \((X', D'_\epsilon)\) is a klt pair by construction.

By Lemma~\ref{lem:div-combination},
every exceptional non-terminal place of $(X,\Gamma_\epsilon)$ is a log canonical place of $(X,B)$.
Therefore, every non-terminal center of $(X,\Gamma_\epsilon)$ is a stratum of 
$(X,\lceil D'_\epsilon \rceil$).
By the assumption on the strata of $(X,B)$, every proper stratum of $(X',\lceil D'_\epsilon \rceil)$ is horizontal over $T$ and restricts to a proper stratum on each closed fiber. 
Therefore, by successively performing blow-ups 
of the strata of $(X',\lceil D'_\epsilon \rceil)$, we may assume that $(X',D'_\epsilon)$ is a terminal pair (see, e.g.,~\cite[Proposition 2.36]{KM98}).

Let $\delta>0$ be a rational number small enough so that 
$A_{\epsilon,\delta} \coloneqq \epsilon A +\delta D_X$
is ample
and $(X', D'_{\epsilon,\delta} \coloneqq D'_\epsilon +\delta \phi^* E_Y)$ is terminal.
By~\cite[Theorem 1.8]{HMX13}, for every sufficiently divisible \(m\) and for every closed point \(t\in U\), the homomorphism
\begin{equation}\label{eq:surj-log-res}
H^0(X',\mathcal{O}_{X'}(m(K_{X'}+D'_{\epsilon,\delta}
+\psi^*A_{\epsilon,\delta}
)))
\rightarrow 
H^0(X'_t, 
\mathcal{O}_{X'_t}(m(K_{X'}+D'_{\epsilon,\delta}
+\psi^* A_{\epsilon,\delta})|_{X'_t}))
\end{equation} 
is surjective.
Since \(\phi\) is a birational contraction, we have \(\pi^* = \phi_* \circ \psi^*\), so
\begin{equation}\label{eq:pushforward}
\phi_*(m(K_{X'}+D'_{\epsilon,\delta}+\psi^* A_{\epsilon,\delta}))\sim_{\qq} \delta m D.
\end{equation}
Now let \(m\) be divisible enough, and let 
\[\Gamma_t \in H^0(Y_t,\mathcal{O}_{Y_t}(\delta m D|_{Y_t}))
 = H^0(Y_t, \mathcal O_{Y_t}(m(\pi^*(K_X+\Gamma_{\epsilon} + A_{\epsilon,\delta}) + \delta E_Y)|_{Y_t})) . 
\]
Pulling back $\Gamma_t$ via $\phi_t \colon X'_t \dashrightarrow Y_t$, we obtain
\[
\phi_t^*\Gamma_t \in H^0(X'_t, \mathcal{O}_{X'_t}( m (K_{X'}+D'_{\epsilon,\delta}+\psi^* A_{\epsilon,\delta}-F_\epsilon)|_{X'_t})),
\]
so we get
\[
\phi_t^*\Gamma_t +mF_\epsilon|_{X'_t} 
\in 
H^0(X'_t, \mathcal{O}_{X'_t}( m (K_{X'}+D'_{\epsilon,\delta}+\psi^* A_{\epsilon,\delta})|_{X'_t})).
\]
By the surjectivity of~\eqref{eq:surj-log-res}, the section 
$\phi_t^*\Gamma_t+mF_\epsilon|_{X'_t} $ is the restriction of a section
\[
\Gamma_{X'} \in H^0(X',\mathcal{O}_{X'}(m(K_{X'}+D'_{\epsilon,\delta}+\psi^* A_{\epsilon,\delta}))).
\]
By~\eqref{eq:pushforward}, we conclude
\(
\phi_*(\Gamma_{X'})\in H^0(Y,\mathcal{O}_Y(\delta mD))
\)
restricts to $\Gamma \in H^0(Y_t,\mathcal{O}_{Y_t}(\delta mD|_{Y_t}))$.
\end{proof}

\begin{definition}\label{defn:extension-prime-divisors-property}
{\em 
Let $f\colon X\rightarrow T$ be a fibration.
We say that $f$ satisfies the \defi{extension
of prime divisors property} if for every closed fiber $X_t$
and every effective prime divisor $P_t\subset X_t$,
there an effective divisor $P\subset X$ 
for which ${\rm supp}(P)|_{X_t}={\rm supp}(P_t)$.
}
\end{definition}

Next, we show that if a log CY fibration satisfies certain nice properties,
then after taking an \'etale cover of the base,
we can find a nice dlt modification with the property that any higher dlt modification has the extension of prime divisors property.
The proof of the following proposition uses a result of Totaro \cite[Theorem 4.1]{FHS21} on deformation invariance of the divisor class group for certain families of Fano varieties, after an \'etale cover of the base.

\begin{proposition}\label{prop:nice-FT-fibration}
Let $X$ be a terminal variety,
$f\colon X\rightarrow T$ a Fano fibration,
and $(X,B)$ an lc pair that is log CY of index one over $T$.
Assume that every fiber of $f$ is $\qq$-factorial.
After replacing \(T\) by a dense open subset,
there exists a commutative diagram:
\[
\xymatrix{
(X,B)\ar[d]_-{f} & (X_U,B_U)\ar[d]^-{f_U} \ar[l]_-{\phi} & (Z,B_Z) \ar[l] \ar[dl]^-{f_Z}  \\
T & U\ar[l] &  
}
\]
satisfying the following conditions:
\begin{enumerate}
\item $U\rightarrow T$ is a dominant finite \'etale morphism, the pair $(X_U,B_U)$ is obtained by base change, and $B_U$ contains no fibers of $f_U$, \label{item:nice-FT-fibration-etale-cover}
\item $(Z,B_Z)\rightarrow (X_U,B_U)$ is a $\qq$-factorial dlt modification that induces $\qq$-factorial dlt modifications on log fibers over closed points,
\label{item:nice-FT-fibration-phi-extraction}
\label{item:nice-FT-fibration-small-Q-fact}
\item the fibration $f_Z$ is of Fano type, \label{item:nice-FT-fibration-base-change-FT}
\item there is a boundary $\Delta_Z$ on $Z$ such that
$(Z,\Delta_Z)$ is terminal and log CY over $U$, \(\Delta_Z\) contains no fibers of \(f_Z\), and all log fibers of \((Z,\Delta_Z)\to U\) are terminal, \label{item:nice-FT-fibration-base-change-terminal-log-CY}
\item every stratum of $(Z,B_Z)$ restricts to a stratum on each closed log fiber of \(f_Z\), \label{item:nice-FT-fibration-dlt-centers}
\item every $\qq$-factorial dlt modification $\phi_Y\colon (Y,B_Y)\rightarrow (Z,B_Z)$
induces a \(\bb Q\)-factorial
dlt modification on every closed fiber of $(Z,B_Z)\rightarrow U$, and \label{item:nice-FT-fibration-dlt-modification}
\item for any $\qq$-factorial dlt modification $\phi_Y\colon (Y,B_Y)\rightarrow (Z,B_Z)$, the fibration $f_Z\circ \phi_Y$ has isomorphic class group restrictions and satisfies the extension of prime divisors property. \label{item:nice-FT-fibration-divisors-condition}
\end{enumerate}
\end{proposition}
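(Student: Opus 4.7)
The construction proceeds in three main stages, relying on two key external inputs: Totaro's deformation invariance of divisor class groups for families of Fano varieties (\cite[Theorem 4.1]{FHS21}) and the invariance-of-sections result (Lemma~\ref{lem:invariance-of-sections}). In Stage 1, I arrange conditions (1)--(3) and (5) by \'etale base change, a dlt modification, and generic shrinking. Since $f$ is a Fano fibration, it is of Fano type, so Lemma~\ref{lem:fano-type-num-triv-nbhd}(2) lets me shrink $T$ to obtain injective class group restrictions; I then apply Totaro's theorem to the generic fiber of $f$ to produce a dominant finite \'etale cover $U\to T$ (after further shrinking) such that $X_U\to U$ has isomorphic class group restrictions. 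I take a $\qq$-factorial dlt modification $(Z,B_Z)\to(X_U,B_U)$ via \cite[Theorem 3.1]{KollarKovacs10}. Condition (3) follows from Lemma~\ref{lem:FT2} because $Z\to X_U$ only contracts log canonical places. Because $(X_U,B_U)$ has index one and $f_U$ is Fano type, every log canonical place of $(X_U,B_U)$ is horizontal, and after shrinking $U$ once more, the generic properties (1), (2) and (5) all hold.

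In Stage 2 (conditions (4) and (6)), I choose $\Delta_Z$ as a sufficiently general member of a relatively big and semiample system $\qq$-linearly equivalent to $-K_Z$ over $U$: since $Z$ is smooth at generic points of components of $B_Z$ (by dlt), a general $\Delta_Z$ avoids the codimension $\geq 2$ non-terminal locus of $Z$ and yields $(Z,\Delta_Z)$ terminal, with terminality propagating to log fibers after further shrinking $U$. For (6), given any $\qq$-factorial dlt modification $\phi_Y\colon(Y,B_Y)\to(Z,B_Z)$, the divisors extracted by $\phi_Y$ are log canonical places of $(Z,B_Z)$ and hence horizontal components of $B_Y$; combined with (5), inversion of adjunction (\cite[Theorem 1]{Hacon14}) applied fiberwise forces $\phi_Y$ to restrict to a $\qq$-factorial dlt modification on each closed log fiber.

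In Stage 3 (condition (7)), injectivity of class group restrictions for $f_Z\circ\phi_Y$ follows from Lemma~\ref{lem:injective-class-groups} applied to the birational contraction $Y\dashrightarrow X_U$, and surjectivity propagates from that of $X_U\to U$ because both $Z\to X_U$ and $\phi_Y$ extract only horizontal components whose fiber restrictions generate the additional classes. For extension of prime divisors, fix a closed $t\in U$ and a prime $P_t\subset Y_t$: using surjectivity, lift $[P_t]$ to a Cartier class represented by $mD$ on $Y$ with $mD|_{Y_t}\sim mP_t$ for $m$ sufficiently divisible. Applying Lemma~\ref{lem:invariance-of-sections} to the canonical section of $\mathcal{O}_{Y_t}(mP_t)$ lifts it to an effective divisor $P\sim mD$ on $Y$; then $P|_{Y_t}$ is effective and $\qq$-linearly equivalent to the prime divisor $mP_t$ on the Fano-type fiber $Y_t$, forcing $P|_{Y_t}=mP_t$ and hence $\mathrm{supp}(P)|_{Y_t}=\mathrm{supp}(P_t)$.

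The principal obstacle is that conditions (6) and (7) must hold for \emph{every} higher $\qq$-factorial dlt modification of $(Z,B_Z)$, not just one fixed modification; this depends crucially on the index-one hypothesis making all log canonical places horizontal over $U$, and on Totaro's theorem upgrading injectivity of class group restrictions to isomorphism after the \'etale cover, so that the hypotheses of Lemma~\ref{lem:invariance-of-sections} are met uniformly.
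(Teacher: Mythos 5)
Your overall architecture (Totaro's theorem for class group surjectivity after an \'etale cover, a dlt modification, Lemma~\ref{lem:fano-type-num-triv-nbhd} and Lemma~\ref{lem:FT2} for generic properties, and Lemma~\ref{lem:invariance-of-sections} for the extension of prime divisors) matches the paper's, but there are two genuine gaps. The first is the construction of \(\Delta_Z\) in your Stage 2. You fix an arbitrary \(\qq\)-factorial dlt modification \((Z,B_Z)\to(X_U,B_U)\) first and then try to produce \(\Delta_Z\) as a general member of a relative anticanonical system, arguing that a general member avoids the codimension \(\geq 2\) non-terminal locus of \(Z\). This does not work: \(Z\) is obtained by extracting log canonical places of \((X_U,B_U)\), so \(Z\) itself need not be a terminal \emph{variety}, and if \(a_E(Z,0)\leq 1\) for some exceptional \(E\) then \(a_E(Z,\Delta_Z)\leq a_E(Z,0)\leq 1\) for \emph{every} effective \(\Delta_Z\), whether or not \(\Delta_Z\) meets the center of \(E\). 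Avoiding a bad locus cannot repair the singularities of \(Z\). For conclusion~\eqref{item:nice-FT-fibration-base-change-terminal-log-CY} to be attainable at all, the dlt modification must be chosen so that \(Z\) is terminal; the paper does this in the opposite order: it first uses the terminality of \(X\) and the Fano fibration structure to produce \(\Gamma\) with \((X,\Gamma)\) terminal and log CY over \(T\), and then invokes Lemma~\ref{lem:comb-terminal-dlt} to build \((Z,B_Z)\) by sufficiently many blow-ups of strata so that the log pullback \(\Delta_Z\) of \((1-\epsilon)B_U+\epsilon\Gamma_U\) is terminal. This ordering is the whole point of the terminality hypothesis (cf.\ Remark~\ref{rem:cluster-type-singularities-assumptions}), and since Lemma~\ref{lem:invariance-of-sections} requires such a \(\Delta_Z\) on \(Z\), your Stage 3 inherits the gap.

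The second gap is the injectivity half of conclusion~\eqref{item:nice-FT-fibration-divisors-condition}. Lemma~\ref{lem:injective-class-groups} transfers injective class group restrictions from the \emph{source} of a birational contraction to its \emph{target}; applying it to \(Y\dashrightarrow X_U\) would let you deduce injectivity for \(X_U\to U\) from injectivity for \(Y\to U\), which is the reverse of what you need. Injectivity for the higher model \(Y\) does not follow formally from injectivity downstairs, because the extracted divisors could produce new relations on a special fiber. The paper proves it directly: if \(D|_{Y_t}\sim_\qq 0\), then \({\phi_Y}_*D\sim_{U,\qq}0\) by isomorphic class group restrictions on \(Z\), so \(D\sim_{U,\qq}E\) with \(E\) \(\phi_Y\)-exceptional; the negativity lemma on the fiber forces \(E_t=0\), and horizontality of the exceptional divisors (from the strata condition) forces \(E=0\). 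Relatedly, your Stage 1 claims condition~\eqref{item:nice-FT-fibration-dlt-centers} holds "after shrinking \(U\) once more," but shrinking alone cannot guarantee that prime components and strata of \(B_Z\) restrict irreducibly to fibers; the paper needs the second \'etale cover from \cite[Lemma 30]{KX16} (restricting the dual complex) for exactly this, and that horizontality is also what your arguments for~\eqref{item:nice-FT-fibration-dlt-modification} and~\eqref{item:nice-FT-fibration-divisors-condition} silently rely on.
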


\begin{proof}
As $X$ is terminal and is Fano over $T$,
then after possibly shrinking $T$, we can find a boundary $\Gamma$ such that $(X,\Gamma)$ is terminal and log Calabi--Yau over $T$.
After further shrinking $T$, we may assume that neither $\Gamma$ nor $B$ contains any fiber of $X\rightarrow T$, and that the fibers of \(f\) are Fano varieties with terminal singularities.

We first construct $U\rightarrow T$ and $(X_U,B_U)$ as in~\eqref{item:nice-FT-fibration-etale-cover}.
Since each fiber of $X\rightarrow T$ is $\qq$-factorial and terminal, it has rational singularities~\cite{Elkik81}
and is smooth in codimension two.
Moreover, by Kawamata--Viehweg vanishing, every fiber has acyclic structure sheaf,
so we may apply~\cite[Theorem 4.1]{FHS21} to find a dominant finite \'etale morphism $T'\rightarrow T$ from a smooth variety 
such that the base change $f'\colon X' \coloneqq X_{T'}\rightarrow T'$ has the property
that ${\rm Cl}(X'/T')$ maps surjectively to the class group of each closed fiber.
Let $(X',B')$ (resp. $(X',\Gamma')$) be 
the log pull-back of $(X,B)$ (resp. $(X,\Gamma)$) to $X'$.
Note that $f'$ is still a Fano type fibration,
$(X',B')$ is log Calabi--Yau of index one over $T'$,
and $(X',\Gamma')$ is terminal and log Calabi--Yau over $T'$.
By~\cite[Lemma 30]{KX16} applied to any dlt modification of $(X',B')$, we can find a dominant finite \'etale morphism
$U\rightarrow T'$ from a smooth variety
such that if $f_U\colon (X_U,B_U)\rightarrow T'''$ 
is the base change,
then the dual complex of $(X_U,B_U)$
restricts to the dual complex of the log fibers. 
In particular, each prime component of $B_U$ 
remains prime after restriction to any closed fiber.
We let $(X_U,\Gamma_U)$ be the log pullback of $(X',\Gamma')$ to $X_U$.
The morphism $f_U$ is Fano type, 
$(X_U,B_U)$ is log CY of index one over $U$,
and $(X_U,\Gamma_U)$ is terminal and log CY over $U$.
Further, \(\Cl(X_U/U)\) maps surjectively to the class group of each closed fiber of \(f_U\).

Now we turn to constructing the dlt modification
$(Z,B_Z)\rightarrow (X_U,B_U)$ in~\eqref{item:nice-FT-fibration-phi-extraction}.
By Lemma~\ref{lem:comb-terminal-dlt}, 
for sufficiently small $\epsilon>0$
we can find a $\qq$-factorial dlt modification
$(Z,B_Z)\rightarrow (X_U,B_U)$
such that
the log pullback $(Z,\Delta_Z)$
of $(X_U,(1-\epsilon)B_U+\epsilon\Gamma_U)$
is terminal.
We argue that the closed fibers of $Z\rightarrow U$ remain $\qq$-factorial.
For a closed point $t\in U$, the prime exceptional divisors of $Z_t\rightarrow X'_t$
are restrictions of the prime exceptional divisors of $Z\rightarrow X'$.
Indeed, this holds as the prime exceptional divisors of $Z\rightarrow X'$ are components of $B_Z$ and hence correspond to vertices of the dual complex $\mathcal{D}(Z,B_Z)$.
The prime exceptional divisors 
of $Z\rightarrow X'$ are $\qq$-Cartier.
Thus, the exceptional prime divisors
of $Z_t\rightarrow X'_t$ are $\qq$-Cartier, and so $Z_t$ is $\qq$-factorial.
We conclude that the $\qq$-factorial 
dlt modification $(Z,B_Z)\rightarrow (X_U,B_U)$ induces $\qq$-factorial dlt modifications on log fibers over closed points. This proves~\eqref{item:nice-FT-fibration-phi-extraction}.
Moreover, as the exceptional prime divisors over closed fibers are restrictions of the exceptional prime divisors of \(Z \to X_U\), the composition \(f_Z \colon Z \to U\) has surjective class group restrictions.
We have two pairs on the \(\bb Q\)-factorial variety \(Z\), namely:
\begin{enumerate}[label=(\roman*)]
    \item $(Z,B_Z)$, which is dlt and log CY of index one over $U$, and
    \item $(Z,\Delta_Z)$, which is terminal and log CY over $U$.
\end{enumerate}
The morphism $f_Z$ is Fano type by Lemma~\ref{lem:FT1}. 
After possibly shrinking \(U\) again, we may assume that \(U\) is smooth and affine,
all the log fibers of \((Z, \Delta_Z)\to U\) are terminal,
and \(f_Z\) has isomorphic class group restrictions (by Lemma~\ref{lem:fano-type-num-triv-nbhd}\eqref{item:fano-type-num-triv-nbhd-part-2}).
The following conditions are satisfied:

\begin{enumerate}[label=(\alph*)]
    \item the pair $(Z, B_Z)$ is $\qq$-factorial,
    dlt, and log CY of index one over $U$, \label{item:pf-of-nice-FT-fibration-B-lCY}
    \item the morphism $f_Z$ is of Fano type and has isomorphic class group restrictions, \label{item:pf-of-nice-FT-fibration-fano-type}
    \item the pair $(Z,\Delta_Z)$ is terminal
    and log CY over $U$, \label{item:pf-of-nice-FT-fibration-Delta-lCY}
    \item each stratum of $(Z,B_Z)$ restricts to a stratum of the log fiber over each closed point,
    \label{item:pf-of-nice-FT-fibration-lc-centers-restrict}
\end{enumerate}
showing that the assumptions of Lemma~\ref{lem:invariance-of-sections} hold, so after shrinking \(U\) again, we furthermore have
\begin{enumerate}[resume,label=(\alph*)]
  \item for every $t\in U$, every $\qq$-factorial dlt modification $\phi_Y\colon (Y,B_Y) \rightarrow (Z,B_Z)$,
    and every divisor $D$ on $Y$, we have a surjective homomorphism
\[
H^0(Y,\mathcal{O}_Y(mD)) \twoheadrightarrow H^0(Y_t,\mathcal{O}_{Y_t}(mD|_{Y_t})),
\]
for every $m$ sufficiently divisible. \label{item:pf-of-nice-FT-fibration-invariance-of-sections}
\end{enumerate}

Now we verify the conditions in the proposition statement. We've already seen that~\eqref{item:nice-FT-fibration-etale-cover} and \eqref{item:nice-FT-fibration-phi-extraction}.
Conditions~\ref{item:pf-of-nice-FT-fibration-fano-type} and~\ref{item:pf-of-nice-FT-fibration-Delta-lCY} imply~\eqref{item:nice-FT-fibration-base-change-FT} and~\eqref{item:nice-FT-fibration-base-change-terminal-log-CY}, respectively.
Conditions~\ref{item:pf-of-nice-FT-fibration-B-lCY} and~\ref{item:pf-of-nice-FT-fibration-lc-centers-restrict}
ensure that~\eqref{item:nice-FT-fibration-dlt-centers} and~\eqref{item:nice-FT-fibration-dlt-modification} hold.
Thus, it remains to show~\eqref{item:nice-FT-fibration-divisors-condition}.

First, we argue that $f_Z\circ \phi_Y$ has isomorphic class group restrictions.
For surjectivity, the class group of every fiber $Y_t$ is generated by the pullbacks of divisors from $Z_t$
and the prime exceptional divisors of $Y_t\rightarrow Z_t$.
By~\ref{item:pf-of-nice-FT-fibration-B-lCY} and~\ref{item:pf-of-nice-FT-fibration-lc-centers-restrict},
the prime exceptional divisors of $Y_t\rightarrow Z_t$ are restrictions of the prime exceptional divisors of $Y\rightarrow Z$.
Since \(Y\) is \(\bb Q\)-factorial, this shows \(Y_t\) is also \(\bb Q\)-factorial.
For injectivity,
if $D_t$ is a $\qq$-Cartier divisor on $Y$ that is $\qq$-linearly trivial on $Y_t$,
then ${\phi_{Y,t}}_*D_t$ is $\qq$-linearly trivial on $Z_t$.
The pushforward ${\phi_Y}_*D$ is $\qq$-linearly trivial over $U$ by~\ref{item:pf-of-nice-FT-fibration-fano-type},
so $D \sim_{U,\qq} E$ for some $\phi_Y$-exceptional divisor $E$.
Since $E_t$ is $\qq$-linearly trivial on  \(Y_t\) and exceptional over $Z_t$, the negativity lemma implies that $E_t$ is the trivial divisor.
Hence, since \(E\) is horizontal over the base by~\ref{item:pf-of-nice-FT-fibration-lc-centers-restrict}, we conclude that $E$ is trivial, so $D$ is $\qq$-linearly trivial over $U$.

It remains to show that $f_Z\circ \phi_Y$ satisfies the extension of prime divisors property.
Let $E_t\subset Y_t$ be an effective prime divisor.
Since $f_Z\circ \phi_Y$ has isomorphic class group restrictions,
there is a divisor $D$ on $Y$ for which $D|_{Y_t}\sim_{\bb Q} E_t$,
or, in other words, $0\leq mE_t \in H^0(Y_t,\mathcal{O}_{Y_t}(mD|_{Y_t}))$ for some \(m\).
By condition~\ref{item:pf-of-nice-FT-fibration-invariance-of-sections}, after replacing \(m\) by a sufficiently divisible multiple, there exists 
$0\leq F \in H^0(Y,\mathcal{O}_Y(mD))$ 
such that $F|_{Y_t}=mE_t$. 
In particular, $F$ is an effective divisor
for which ${\rm supp}(F)|_{Y_t}={\rm supp}(E_t)$.
\end{proof}

\begin{theorem}\label{thm:cluster-type-stabilize-open}
Let $X$ be a terminal variety, 
$f\colon X\rightarrow T$ a Fano fibration,
and $(X,B)$ an lc pair
that is log CY of index one over $T$.
Assume that every fiber of $f$ is $\qq$-factorial.
There is a non-empty open subset $V\subset T$ such that if $(X_t,B_t)$ is of cluster type for some $t\in V$, then there exists an open neighborhood $t\in V_0 \subseteq V$ over which every fiber is of cluster type.
\end{theorem}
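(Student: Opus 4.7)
The plan is to use the characterization of cluster type log CY pairs via dlt modifications admitting a crepant birational contraction to $(\pp^n, \Sigma^n)$ (Lemma~\ref{lem:dlt-mod-cluster}), and to spread out such a contraction across nearby fibers by running an MMP on the total space. First, I would apply Proposition~\ref{prop:nice-FT-fibration} (after shrinking $T$) to obtain a finite \'etale cover $U \to T$ together with a $\qq$-factorial dlt modification $(Z,B_Z)\rightarrow (X_U,B_U)$ enjoying all the listed nice properties: in particular, a terminal boundary $\Delta_Z$ log CY over $U$, and isomorphic class group restrictions and the extension of prime divisors property for higher dlt modifications. Let $V \subset T$ be the image of $U$. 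Suppose $(X_t, B_t)$ is cluster type for some $t\in V$, and pick a preimage $t' \in U$; then $(Z_{t'}, B_{Z,t'})$ is cluster type by Lemma~\ref{lem:cluster-type-extraction}. Applying Lemma~\ref{lem:dlt-mod-cluster}, I obtain a higher $\qq$-factorial dlt modification $(Y_{t'}, B_{Y,t'}) \to (Z_{t'}, B_{Z,t'})$ admitting a crepant birational contraction $\pi_{t'}\colon (Y_{t'}, B_{Y,t'}) \dashrightarrow (\pp^n, \Sigma^n)$, together with the auxiliary divisor $E_{t'} \coloneqq {\rm Ex}(\pi_{t'}) \setminus {\rm supp}(B_{Y,t'})$ such that $(Y_{t'}, B_{Y,t'}+\epsilon E_{t'})$ is dlt for small $\epsilon > 0$.

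Next, I would spread this fiber dlt modification out to a $\qq$-factorial dlt modification $(Y,B_Y) \to (Z,B_Z)$ over $U$ (after possibly shrinking $U$), using Lemma~\ref{lem:comb-terminal-dlt} to arrange that the log pullback $\Gamma_Y$ of $\Delta_Z$ remains a terminal pair log CY over $U$. By the extension of prime divisors property and isomorphic class group restrictions from Proposition~\ref{prop:nice-FT-fibration}, the divisor $E_{t'}$ extends to an effective $\qq$-divisor $E$ on $Y$ with $E|_{Y_{t'}} \sim_{\qq} E_{t'}$. Then I would run the $(K_Y + \Gamma_Y + \delta E)$-MMP over $U$ for small $\delta > 0$; since $K_Y + \Gamma_Y \sim_{U,\qq} 0$, this is equivalent to a $(\delta E)$-MMP and terminates by the Fano type-ness of $Y \to U$. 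By Lemma~\ref{lem:nice-fibers-klt-CY-fibration}, after further shrinking $U$ and taking $\delta$ sufficiently small, the fibers of the output $W \to U$ are normal, each induced fiber map $Y_s \dashrightarrow W_s$ is a birational contraction, and each log fiber $(W_s, \Gamma_{W,s})$ is terminal. Letting $B_W$ denote the pushforward of $B_Y$, the pair $(W, B_W)$ is log CY over $U$.

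On the fiber over $t'$, the MMP coincides with the $(\delta E_{t'})$-MMP on $(Y_{t'}, B_{Y,t'})$, which contracts exactly the components of $E_{t'}$. The induced crepant birational map $(W_{t'}, B_{W,t'}) \dashrightarrow (\pp^n, \Sigma^n)$ is therefore a birational contraction whose contracted divisors are log canonical places of $(W_{t'}, B_{W,t'})$---equivalently non-canonical places of $(\pp^n, \Sigma^n)$---so Lemma~\ref{lem:toric-mod} implies $(W_{t'}, B_{W,t'})$ is a toric log CY pair. Verifying the hypotheses of Proposition~\ref{prop:toric-log-CY-open} for $(W,B_W) \to U$ (normal fibers from the previous step, injective class group restrictions descending from $Y$ via Lemma~\ref{lem:injective-class-groups}, and the remaining conditions after further shrinking), I conclude that every log fiber $(W_s, B_{W,s})$ is a toric log CY pair over an open neighborhood of $t'$.

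Finally, for each $s$ in this neighborhood, a common $\qq$-factorial dlt modification $(\tilde Y_s, B_{\tilde Y_s})$ of $(Y_s, B_{Y,s})$ and $(W_s, B_{W,s})$ is of cluster type by Lemma~\ref{lem:cluster-type-extraction} applied to the dlt modification $\tilde Y_s \to (W_s, B_{W,s})$, since the toric pair $(W_s, B_{W,s})$ is itself cluster type. Since $\tilde Y_s \to Y_s \to X_s$ is a composition of dlt modifications, it contracts only log canonical places, so Lemma~\ref{lem:cluster-type-descend-dlt} gives that $(X_s, B_s)$ is of cluster type. Descending via the \'etale cover $U \to T$ then yields the desired open neighborhood $V_0 \subset V$ of $t$. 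The hard part will be ensuring that the global MMP restricts faithfully to fiber MMPs---so that $W_{t'}$ is precisely the output of the $E_{t'}$-MMP on $Y_{t'}$---and that the induced map $W_{t'} \dashrightarrow \pp^n$ contracts only log canonical places; both follow from Lemma~\ref{lem:nice-fibers-klt-CY-fibration} combined with the invariance properties engineered into Proposition~\ref{prop:nice-FT-fibration}.
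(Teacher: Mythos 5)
Your proposal is correct and follows essentially the same route as the paper's proof: Proposition~\ref{prop:nice-FT-fibration}, the characterization in Lemma~\ref{lem:dlt-mod-cluster}, extension of the exceptional divisor $E$ via the extension of prime divisors property, the $(K_Y+\Gamma_Y+\delta E)$-MMP over the base controlled by Lemmas~\ref{lem:nice-fibers-klt-CY-fibration} and~\ref{lem:nice-fibers-lc-CY-fibration}, toricity of the central fiber of $W$ via Lemma~\ref{lem:toric-mod}, and Proposition~\ref{prop:toric-log-CY-open} to propagate toricity to nearby fibers. The one step you leave implicit --- why the fiberwise dlt modification $(Y_{t'},B_{Y,t'})\to(Z_{t'},B_{Z,t'})$ globalizes over $U$ --- is handled in the paper by arranging (via Lemma~\ref{lem:nice-dlt-mod-wrt-E}) that it is a sequence of blow-ups of strata of $(Z_{t'},B_{Z,t'})$, which extend to strata of $(Z,B_Z)$ by Proposition~\ref{prop:nice-FT-fibration}\eqref{item:nice-FT-fibration-dlt-centers}.
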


\begin{proof}
Consider the following commutative diagram
provided by Proposition~\ref{prop:nice-FT-fibration}.
\[
\xymatrix{
(X,B)\ar[d]_-{f} & (X_U,B_U)\ar[d]^-{f_U} \ar[l]_-{\phi} & (Z,B_Z) \ar[l] \ar[dl]^-{f_Z}  \\
T & U\ar[l] &  
}
\]
By Proposition~\ref{prop:nice-FT-fibration}\eqref{item:nice-FT-fibration-base-change-FT} there exists a boundary $\Delta_Z$ such that $(Z,\Delta_Z)$ is terminal and log Calabi--Yau over $U$ and $\Delta_Z$ contains no fibers of $f_Z$.
Furthermore, $(Z,B_Z)\rightarrow (X_U,B_U)$ induces $\qq$-factorial dlt modifications on closed fibers by Proposition~\ref{prop:nice-FT-fibration}\eqref{item:nice-FT-fibration-dlt-modification}.
After possibly shrinking $U$, we may assume that the following conditions are satisfied:
\begin{enumerate}[label=(\alph*)]
    \item $U$ is a smooth affine variety, \label{item:4.5-open-U-is-smooth-affine}
    \item \(f_Z\) has equidimensional fibers, \label{item:4.5-f_Z-equidimensional}
    \item for every $\epsilon \in [0,1]$, the boundary divisor and the moduli divisor induced by the canonical bundle formula by $(Z,(1-\epsilon)B_Z+\epsilon\Delta_Z)$ on $U$ are trivial, and \label{item:4.5-boundary-moduli-divisor-trivial}
    \item no fiber of \(Z \to U\) is contained in \(B_Z\) or \(\Delta_Z\), every log fiber of $(Z,B_Z)\rightarrow U$ is dlt,
    and every log fiber of $(Z,\Delta_Z)\rightarrow U$ is terminal. 
    \label{item:4.5-log-fibers}
\end{enumerate}
Let $V \subset T$ be an open subset contained in the image of $U$;
we may further assume $V$ is affine.
We will show that \(V\) is the desired open subset in the proposition statement.

Thus, assume that $(X_{t_0},B_{t_0})$ is of cluster type 
for some $t_0\in V$,
and let $s_0\in U$ be a pre-image of $t_0$.
Then the dlt log CY pair $(Z_{s_0},B_{Z,s_0})$
is of cluster type by Lemma~\ref{lem:cluster-type-extraction} and Proposition~\ref{prop:nice-FT-fibration}.
By Lemma~\ref{lem:dlt-mod-cluster}, 
we have a commutative diagram
\begin{equation}\label{eq:comm-diag-central-fiber} 
\xymatrix{
(Y_{s_0},B_{Y,s_0})\ar[d]_-{\phi_{Y,s_0}}\ar@{-->}[rd]^-{\pi_{s_0}} & \\
(Z_{s_0},B_{Z,s_0}) \ar@{-->}[r] & (\pp^d,\Sigma^d),
}
\end{equation}
where $\phi_{Y,s_0}\colon (Y_{s_0},B_{Y,s_0})\rightarrow (Z_{s_0},B_{Z,s_0})$
is a \(\bb Q\)-factorial dlt modification
and 
$\pi_{s_0}\colon (Y_{s_0},B_{Y,s_0}) \dashrightarrow (\pp^d,\Sigma^d)$
is a crepant birational contraction.
Furthermore, if $E_{s_0} \coloneqq {\rm Ex}(\pi_{s_0})\setminus B_{Y,s_0}$,
then $(Y_{s_0},B_{Y,s_0}+\epsilon E_{s_0})$
is dlt for $\epsilon>0$ small enough.
By Lemma~\ref{lem:nice-dlt-mod-wrt-E},
we may pass to a higher dlt modification to assume that \(\phi_{Y,s_0}\) is
obtained by
blow-ups of strata of $(Z_{s_0},B_{Z,s_0})$.
The strata of $(Z_{s_0},B_{Z,s_0})$
are precisely the restrictions of the strata
of $(Z,B_Z)$ to the log fiber over $s_0$
by condition~\eqref{item:nice-FT-fibration-dlt-centers}.
Hence, there exists a \(\bb Q\)-factorial dlt modification $(Y,B_Y)\rightarrow (Z,B_Z)$,
obtained by a sequence of
blow-ups of strata of $(Z,B_Z)$,
such that the log fiber over $s_0$
of $(Y,B_Y)\rightarrow U$ is precisely
$(Y_{s_0},B_{Y,s_0})$.

We will show that, over a neighborhood \(U_0\) of \(s_0\), the log fibers of \((Y, B_Y)\) are cluster type CY pairs. To do this, we will construct a birational model \(Y \dashrightarrow W\) on which we may apply Proposition~\ref{prop:toric-log-CY-open}.
We show that $Y\dashrightarrow W$ induces
fiberwise birational contractions to toric log CY pairs. We construct \(W\) as follows.

By Proposition~\ref{prop:nice-FT-fibration}\eqref{item:nice-FT-fibration-divisors-condition}, there exists an effective divisor $E_Y$ on $Y$ such that ${\rm supp}(E|_{Y_{s_0}})={\rm supp}(E_{s_0})$. Note that \(E_Y\) has no common components with \(B_Y\).
By Lemma~\ref{lem:nice-dlt-mod-wrt-E}, after possibly replacing $(Y,B_Y)$ with a higher dlt modification,
we may assume that $(Y,B_Y+\delta E_Y)$ 
is dlt for $\delta>0$ small enough.
In particular, over an open neighborhood \(s_0 \in U_0 \subset U\), the effective divisor $E_Y$ contains no fiber of \(Y_{U_0} \to U_0\) and no intersection of a log canonical center of $(Y,B_Y)$ with a fiber of $Y_{U_0}\rightarrow U_0$.
By Lemma~\ref{lem:div-combination}, Lemma~\ref{lem:comb-terminal-dlt}, and Proposition~\ref{prop:nice-FT-fibration}\eqref{item:nice-FT-fibration-dlt-centers}, after possibly replacing  $(Y,B_Y)$
with a higher $\qq$-factorial dlt modification, we may assume that for \(\epsilon > 0\) small enough, the log pull-back 
$(Y,\Gamma_Y)$ of $(Z,(1-\epsilon)B_Z+\epsilon \Delta_Z)$ to $Y$
is terminal and the log fibers of $(Y,\Gamma_Y) \rightarrow U$ are terminal.
Then, for $\delta>0$ small enough, the pair
$(Y,\Gamma_Y+\delta E_Y)$ is terminal
and its restriction to closed fibers of $Y_{U_0}\rightarrow U_0$ are terminal.
By Lemma~\ref{lem:FT2}, the morphism  $f_Z\circ \phi_Y$ is Fano type.
Hence, we may run a $(K_Y+\Gamma_Y+\delta E_Y)$-MMP over $U_0$
that terminates with a good minimal model $W\rightarrow U_0$.

Let $B_W$ (resp. \(E_W\)) be the pushforward of $B_{Y, U_0}$ (resp. \(E_{Y, U_0}\)) to $W$.
Next, we will show that \((W, B_W)\) satisfies the hypotheses of Proposition~\ref{prop:toric-log-CY-open} over some open neighborhood of \(s_0\).

First, $W \to U_0 $ has injective class group restrictions by Proposition~\ref{prop:nice-FT-fibration}\eqref{item:nice-FT-fibration-divisors-condition} and Lemma~\ref{lem:injective-class-groups}.
Next, we apply Lemma~\ref{lem:nice-fibers-klt-CY-fibration} to $(Y,\Gamma_Y)$
and Lemma~\ref{lem:nice-fibers-lc-CY-fibration} to $(Y,B_Y)$ over \(U_0\).
The assumptions on Lemma~\ref{lem:nice-fibers-klt-CY-fibration} hold by~\ref{item:4.5-open-U-is-smooth-affine}, the choice of \(\epsilon\) above, and~\ref{item:4.5-boundary-moduli-divisor-trivial}.
The assumptions on Lemma~\ref{lem:nice-fibers-lc-CY-fibration} hold by~\ref{item:4.5-open-U-is-smooth-affine}, Proposition~\ref{prop:nice-FT-fibration}\eqref{item:nice-FT-fibration-dlt-modification}, and~\ref{item:4.5-boundary-moduli-divisor-trivial}.
By Lemma~\ref{lem:nice-fibers-klt-CY-fibration}\eqref{item:nice-fibers-klt-CY-fibration-normal-fibers-W} and~\eqref{item:nice-fibers-klt-CY-fibration-fibers-birational-contraction}, we conclude that $W_s$ is normal for every closed \(s\in U\),
and the corresponding birational maps $Y_s\dashrightarrow W_s$
are birational contractions. In particular \(Y_{U_0} \dashrightarrow W\) is a birational contraction; furthermore, by~\ref{item:4.5-f_Z-equidimensional}, \(W \to U_0\) has equidimensional fibers.
By Lemma~\ref{lem:nice-fibers-lc-CY-fibration}, the pair $(W_s,B_{W, s})$ is log canonical for every $s\in U_0$.

Next, since \(Y_{U_0} \dashrightarrow W\) is a birational contraction and contracts precisely \(E_{Y, U_0}\), the negativity lemma implies that \((W, B_W)\) is a relative log CY pair over $U$ and \((Y_{U_0}, B_{Y, U_0}) \dashrightarrow (W, B_W)\) is relative crepant birational over $U$. Furthermore, over each closed point \(s\in U_0\), the same argument and conclusions hold for the log fibers.
Over the point \(s_0\),
since \(E_{s_0}\) is \(\pi_{s_0}\)-exceptional,
the composition $(W_{s_0},B_{W,s_0})\dashrightarrow (Y_{s_0},B_{Y,s_0}) \dashrightarrow (\pp^d,\Sigma^d)$ is a crepant birational contraction that only contracts log canonical places of $(\pp^d,\Sigma^d)$.
So \((W_{s_0},B_{W,s_0})\) is a toric log CY pair by Lemma~\ref{lem:toric-mod}.
Thus, we have a \(\bb Q\)-factorial pair $(W,B_W)$ that is log CY
of index one and satisfies the following conditions:
\begin{enumerate}[label=(\roman*)]
    \item $\dim W_s = \dim W_{s_0}$ for every $s\in U_0$,
    \item the divisor $B_W$ is reduced and contains no fibers of \(W\to U_0\),
    \item the log fibers of $(W,B_W)\rightarrow U_0$ are log canonical pairs,
    \item the restriction of $B_W$ to $B_{W,s_0}$ induces a bijection between prime components, 
    \item the fibration $W \to U_0$ has injective class group restrictions, and
    \item $(W_{s_0},B_{W,s_0})$ is a toric log CY pair.
\end{enumerate}
By Proposition~\ref{prop:toric-log-CY-open}, all fibers of $(W,B_W)\rightarrow U_0$ are toric log CY pairs.

Let \(t_0 \in V_0 \subset V\) be an open subset whose preimage in \(U\) is contained in \(U_0\). We will show that the log fiber \((X_t, B_t)\) is a cluster type log CY pair for any closed point \(t\in V_0\).
If \(s\in U_0\) is a preimage of \(t\),
then we have a \(\bb Q\)-factorial dlt modification
\((Y_s,B_{Y,s})\to (Z_s, B_{Z,s}) \to (X_s, B_s)\)
by Proposition~\ref{prop:nice-FT-fibration}\eqref{item:nice-FT-fibration-small-Q-fact} and~\eqref{item:nice-FT-fibration-dlt-modification}.
By construction of \(U_0\), this admits a crepant birational contraction \((Y_s,B_{Y,s})\dashrightarrow (W_s,B_{W,s})\) to a toric log CY pair. Thus, Lemma~\ref{lem:dlt-mod-cluster} shows that \((X_s, B_s) \cong (X_t, B_t)\) is of cluster type.
\end{proof}

\begin{proof}[Proof of Theorem~\ref{thm:CY-cluster}]
We proceed by Noetherian induction. 
If $\dim T=0$, then there is nothing to prove, so assume $\dim T\geq 1$.
Take a dominant finite \'etale morphism \(T' \to T\) such that the base change \((\mathcal X_{T'}, \mathcal B_{T'})\) is a log CY pair of index one over \(T'\).
Let $V'\subseteq T'$ be the open subset 
provided by Theorem~\ref{thm:cluster-type-stabilize-open},
and let \(V \subseteq T\) be its image.
Then $T'_{\rm cluster}\cap V'$
is a constructible subset by Theorem~\ref{thm:cluster-type-stabilize-open}, and Noetherian induction,
so its image $T_{\rm cluster}\cap V$ in \(T\) is also constructible.
By Noetherian induction, we conclude that $T_{\rm cluster}$ is a constructible subset of $T$.
\end{proof}

\begin{remark}\label{rem:cluster-type-singularities-assumptions}
{\em 
    The terminal and \(\bb Q\)-factorial assumptions on the fibers are necessary for our proof of Theorem~\ref{thm:CY-cluster}, in particular for our method of spreading out the crepant birational contraction from \((Y_{s_0}, B_{Y, s_0})\) to a toric log CY pair in the proof of Theorem~\ref{thm:cluster-type-stabilize-open}.
We use the terminality assumption to apply Lemma~\ref{lem:div-combination}. We use the terminality and \(\bb Q\)-factoriality assumptions to apply \cite[Theorem 4.1]{FHS21} in the proof of Proposition~\ref{prop:nice-FT-fibration}.
}
\end{remark}

\section{Proofs of main theorems}\label{sec:proofs-of-main-theorems}
Now we turn to proving the two main theorems of this article, using Theorems~\ref{thm:toric-CY} and~\ref{thm:CY-cluster}. We will need the following lemmas.

\begin{lemma}\label{lem:index-constr}
Let $(\mathcal{X},\mathcal{B})\rightarrow T$ be a family of log CY pairs.
Then there is a locally closed stratification of $T$ such that the log fibers over each stratum have the same index.
\end{lemma}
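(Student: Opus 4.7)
My plan is to prove the lemma by using the Cartier index of the total pair as a uniform bound on the fiber indices, reducing constructibility of the index function to constructibility of finitely many explicit subsets of $T$. Let $m_1$ denote the Cartier index of $(\mathcal{X},\mathcal{B})$, so that $m_1(K_{\mathcal{X}}+\mathcal{B})$ is Cartier on $\mathcal{X}$. By the hypothesis that each log fiber is a pair, no fiber of $\mathcal{X}\to T$ is contained in the support of $\mathcal{B}$; restricting the Cartier divisor $m_1(K_{\mathcal{X}}+\mathcal{B})$ therefore yields the Cartier divisor $m_1(K_{\mathcal{X}_t}+\mathcal{B}_t)$ on every log fiber. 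Hence the index $\iota(t) := \mathrm{index}(\mathcal{X}_t,\mathcal{B}_t)$ is a positive divisor of $m_1$, and the index function takes values in the finite set $\mathcal{D} := \{m\in \mathbb{Z}_{>0} : m \mid m_1\}$.

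For each $m\in \mathcal{D}$, I set $T_m := \{t\in T : m(K_{\mathcal{X}_t}+\mathcal{B}_t) \text{ is Cartier on } \mathcal{X}_t\}$. The locus $\{t : \iota(t) = m\}$ is then the Boolean combination $T_m \setminus \bigcup_{m'\mid m,\, m' < m} T_{m'}$, so it suffices to show that each $T_m$ is constructible: the resulting finite decomposition of $T$ by the values of $\iota$ refines to a locally closed stratification of the desired form.

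I would prove constructibility of each $T_m$ by Noetherian induction on $T$, reducing to producing a dense open $U\subseteq T$ on which $\iota$ is constant. After shrinking, assume $T$ is integral with generic point $\eta$, and let $m_0 := \iota(\eta)\in \mathcal{D}$. Since $m_0(K_{\mathcal{X}_\eta}+\mathcal{B}_\eta)$ is Cartier, the Cartier property spreads out: there exists a dense open $U_0\subseteq T$ such that $m_0(K_{\mathcal{X}_{U_0}/U_0}+\mathcal{B}_{U_0})$ is Cartier on $\mathcal{X}_{U_0}$, and so $\iota(t)\mid m_0$ for every $t\in U_0$. For the reverse divisibility, for each proper divisor $m'$ of $m_0$, the rank-one reflexive sheaf $\mathcal{O}_{\mathcal{X}_\eta}(m'(K_{\mathcal{X}_\eta}+\mathcal{B}_\eta))$ fails to be invertible on $\mathcal{X}_\eta$; I would then use semicontinuity of local freeness for rank-one reflexive sheaves on $\mathcal{X}$, combined with the $S_2$ property of normal fibers to compare the fiberwise divisorial sheaf with the restriction of the relative divisorial sheaf, to conclude that this failure persists on a dense open $U\subseteq U_0$. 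Hence $\iota\equiv m_0$ on $U$, and induction on $T\setminus U$ finishes the proof.

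The main obstacle is precisely this last comparison step: for a flat family of normal varieties, the restriction of a rank-one reflexive sheaf on the total space $\mathcal{X}$ to a fiber $\mathcal{X}_t$ is not generally reflexive on $\mathcal{X}_t$, so it need not coincide with the fiberwise divisorial sheaf $\mathcal{O}_{\mathcal{X}_t}(m(K_{\mathcal{X}_t}+\mathcal{B}_t))$. The two sheaves do agree on the codimension-$\geq 2$ open where the fiber is regular, and since both are rank-one reflexive on the normal (hence $S_2$) variety $\mathcal{X}_t$, the fiberwise sheaf is the reflexive hull of the restriction. Carefully tracking reflexive hulls and their behavior under base change, and combining this with openness of the locally-free locus of a coherent sheaf together with Chevalley's theorem on images of constructible sets, is the technical heart of the argument.
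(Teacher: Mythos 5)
Your route is genuinely different from the paper's. The paper passes to a finite \'etale cover of $T$ with trivial Picard group, takes (over a dense open) the smallest $m$ with $m(K_{\mathcal X}+\mathcal B)\sim 0$ on the total space, forms the associated index-one cyclic cover, and identifies the fibers of index exactly $m$ as those over which the cover remains connected; Noetherian induction finishes. You instead bound the fiber index by the Cartier index $m_1$ of the total pair and stratify $T$ by the loci $T_m$ where $m(K_{\mathcal X_t}+\mathcal B_t)$ is Cartier. The difference is not merely cosmetic, and it exposes the main gap: you are controlling the wrong notion of index. Although the preliminary definition in the paper reads ``Cartier,'' the notion this lemma must deliver is the smallest $m$ with $m(K_{\mathcal X_t}+\mathcal B_t)\sim 0$ --- this is what Lemma~\ref{lem:index-one} and the use of $W_1$ in the proof of Theorem~\ref{thm:toric-Fano} require, and it is what the index-one cover in the paper's proof detects. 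Your argument shows that $\iota(t)(K_{\mathcal X_t}+\mathcal B_t)$ is Cartier but says nothing about whether this class is linearly trivial or a nontrivial torsion element of $\Pic(\mathcal X_t)$. For Fano type fibers the two notions coincide (torsion-free Picard group), but the lemma is stated for arbitrary families of log CY pairs, where they differ; detecting the torsion order in the family is precisely what the connectedness of the fibers of the cyclic cover accomplishes, and your scheme has no substitute for it.

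Within the Cartier-index framework there are two further points. The step you yourself identify as the technical heart --- that non-invertibility of $\mathcal O_{\mathcal X_\eta}(m'(K_{\mathcal X_\eta}+\mathcal B_\eta))$ for proper divisors $m'$ of $m_0$ persists over a dense open --- is asserted but not carried out; it is true, but the required input is that the relative divisorial sheaf commutes with base change over a dense open of $T$ (generic behaviour of reflexive hulls, in the style of Koll\'ar's hulls-and-husks results), and ``semicontinuity of local freeness'' cannot be invoked directly for the fiberwise sheaves, exactly for the reason you describe. Separately, the uniform bound $\iota(t)\mid m_1$ rests on identifying the restriction of $m_1(K_{\mathcal X}+\mathcal B)$ with $m_1(K_{\mathcal X_t}+\mathcal B_t)$, which requires $t$ to lie over the smooth locus of $T$ (or a preliminary stratification); this is harmless but should be stated. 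Both of these are repairable; the Cartier-versus-linear-equivalence issue is the one that genuinely blocks the proof as the lemma is used in the paper.
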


\begin{proof}
By Noetherian induction, it suffices to show that there exists a non-empty open set of $T$ over which all log fibers have the same index.
As the geometric generic log fiber is log CY, we may find a dominant finite \'etale morphism
$T'\rightarrow T$ such that
${\rm Pic}(T')=0$ and
the base change
$(\mathcal{X}_{T'},\mathcal{B}_{T'}) \to T'$ is a log CY fibration.
Thus, over a non-empty open subset $U' \subset T'$, we may find the smallest positive integer $m$ for which $m(K_{\mathcal{X}_{T'}}+\mathcal{B}_{T'})|_{\mathcal X_{U'}} = m(K_{\mathcal{X}_{U'}}+\mathcal{B}_{U'})\sim 0$.
In particular, the index of each log fiber of $(\mathcal{X}_{U'},\mathcal{B}_{U'})\rightarrow {U'}$ divides $m$.
Let $\mathcal{Y}_{U'} \rightarrow \mathcal{X}_{U'}$ be the index one cover of $K_{\mathcal{X}_{U'}}+\mathcal{B}_{U'}$.
By construction, the log fibers of $(\mathcal{X}_{U'},\mathcal{B}_{U'})\rightarrow {U'}$ with index $m$ are precisely the connected fibers of $\mathcal{Y}\rightarrow {U'}$.
Hence, there is an open subset of ${U'}$ over which all log fibers of $(\mathcal{X}_{U'},\mathcal{B}_{U'})\rightarrow {U'}$ is exactly $m$.
\end{proof}

\begin{lemma}\label{lem:log-canonical-constr}
Let $(\mathcal{X},\mathcal{B})\rightarrow T$ be a family of pairs.
Then the subset of $T$ parametrizing log fibers with log canonical singularities is constructible. 
\end{lemma}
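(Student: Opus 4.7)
The plan is to proceed by Noetherian induction on $\dim T$, reducing to showing that there is a dense open subset $U \subseteq T$ over which the log canonical property of the log fiber is constant (either all log fibers over $U$ are lc, or none are). Once this is established, the closed complement $T\setminus U$ with its reduced structure carries the restricted family of pairs (after a further locally closed stratification if necessary), and the inductive hypothesis yields a constructible description of the log canonical locus on $T\setminus U$; together with $U$, this produces a constructible decomposition of $T$.

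To find such a $U$, I would take a log resolution $\pi\colon \widetilde{\mathcal X}\to \mathcal X$ of $(\mathcal X,\mathcal B)$ and write
\[
K_{\widetilde{\mathcal X}} + \widetilde{\mathcal B} \;=\; \pi^*(K_{\mathcal X}+\mathcal B),
\qquad \widetilde{\mathcal B} = \sum_i a_i \widetilde B_i,
\]
with $\widetilde{\mathcal B}$ of snc support. Applying generic smoothness to the morphism $\widetilde{\mathcal X}\to T$ and to each stratum of the snc divisor $\widetilde{\mathcal B}$, together with generic flatness, I can shrink $T$ to a dense open subset $U$ over which:
(i) $\widetilde{\mathcal X}_t\to \mathcal X_t$ is a log resolution of $(\mathcal X_t,\mathcal B_t)$ for every $t\in U$;
(ii) every horizontal prime component $\widetilde B_i$ is flat over $U$ and restricts to a prime divisor on $\widetilde{\mathcal X}_t$ with the same coefficient $a_i$; and
(iii) every vertical prime component of $\widetilde{\mathcal B}$, as well as every prime component of $\mathcal B$ containing a fiber, lies over $T\setminus U$, so that over $U$ the identity $\pi_t^*(K_{\mathcal X_t}+\mathcal B_t) = (K_{\widetilde{\mathcal X}}+\widetilde{\mathcal B})|_{\widetilde{\mathcal X}_t}$ holds.

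Since log canonicity of $(\mathcal X_t,\mathcal B_t)$ can be detected on a single log resolution by the condition that all coefficients of the log pullback be at most $1$, and since by (ii)--(iii) these coefficients are exactly the $a_i$ for horizontal components $\widetilde B_i$ and are independent of $t\in U$, the log canonical property of $(\mathcal X_t,\mathcal B_t)$ is uniformly either true or false over $U$.

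The main obstacle is step (iii): the log pullback on a fiber may, a priori, differ from the restriction of the log pullback on the total family because of contributions from vertical components (components of $\widetilde{\mathcal B}$ mapping to divisors in $T$, and exceptional divisors over the non-snc locus of $(\mathcal X,\mathcal B)$ in fibers). This is handled by shrinking $U$ by finitely many applications of generic smoothness, one for each such vertical component and each stratum of the snc locus of $\widetilde{\mathcal B}$, which is legitimate since $\widetilde{\mathcal B}$ has finitely many components. After these shrinkings, the equality in (iii) holds and the argument above concludes.
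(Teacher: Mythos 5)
Your proposal is correct and follows essentially the same route as the paper: take a log resolution of the total pair, shrink the base (via generic smoothness/flatness on the strata) so that it restricts to log resolutions of the log fibers with unchanged discrepancy coefficients, conclude that log canonicity is constant over a dense open subset, and finish by Noetherian induction. The only cosmetic difference is that the paper first passes through a finite étale cover of the base, which is harmless for constructibility and not essential to your argument.
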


\begin{proof}
Let \(T'\to T\) be a dominant finite \'etale morphism such that the base change \((\mathcal X_{T'}, \mathcal B_{T'})\) admits a log resolution \(\mathcal X' \to \mathcal X_{T'}\) over \(T'\) and such that \(\mathcal B_{T'}\) does not contain any fibers of \(\mathcal X_{T'} \to T'\).
By Noetherian induction, it suffices to show that the condition holds over some non-empty open subset of the base.
There is a non-empty open subset $U\subset T'$ over which all the exceptional divisors of $\mathcal{X}'$ are horizontal over $U$.
By possibly shrinking $U$, we may assume that $\mathcal{X}'\rightarrow \mathcal{X}_{T'}$ induces a log resolution on log fibers of $(\mathcal{X}_{T'},\mathcal{B}_{T'})\rightarrow T$ over closed points of $U$.
Therefore, over $U$, either all or none of the log fibers of $(\mathcal{X},\mathcal{B})\rightarrow T$ are log canonical.
This finishes the proof.
\end{proof}

\begin{proof}[Proof of Theorem~\ref{thm:toric-Fano}]
By Noetherian induction,
it suffices to show that $T_{\rm toric} \cap U$ is constructible for some open dense affine subset $U\subseteq T$.
By \cite[Lemma 2.7]{HX15}, there is a dense open subset \(U\subset T\) such that for $m$ divisible enough, the map
\(
H^0(\mathcal{X},-mK_{\mathcal{X}})\rightarrow
H^0(\mathcal{X}_t,-mK_{\mathcal{X}_t})
\)
is surjective (and nonzero) for every closed point $t\in U$.
Fix such an \(m\),
let 
\[
\mathcal B \subset \mathcal X_U\times |-m K_{\mathcal X_U}|
\]
be the universal divisor, and consider the family \(\pi\colon (\mathcal X_U \times |-m K_{\mathcal X_U}|, \frac{1}{m} \mathcal B) \to U \times |-m K_{\mathcal X_U}|\).
The fiber of \(\pi\) over $(t,p)$ is \((\mathcal X_t, \frac{1}{m} B_p|_{\mathcal X_t})\), where \(\mathcal B_p \in |-m K_{\mathcal X}|\) is the effective divisor corresponding to \(p\).
Let $W\subseteq U \times |-m K_{\mathcal X_U}|$ be the open subset consisting of points $(t,p)$ for which $B_p$ does not contain the fiber $\mathcal{X}_t$, and write \((\mathcal{Y}_W,\mathcal{B}_W) \to W\) for the restriction of the family \(\pi\) to \(W\).

Then \((\mathcal{Y}_W,\mathcal{B}_W) \to W\) is a family of pairs.
Let \(W_{\rm toric} \subset W\) be the locus parametrizing the log fibers that are toric log CY pairs.
Since a variety \(X\) is toric if and only if it admits a boundary \(B\) for which \((X,B)\) is a toric log CY pair,
the intersection \(T_{\rm toric} \cap U\) is equal to the image of \(W_{\rm toric}\) under the projection \(W \subset U \times |-mK_{\mathcal X_U}| \xrightarrow{\pi_1} U\). Thus, it suffices to show that \(W_{\rm toric} \subset W\) is a constructible subset.

By Lemma~\ref{lem:index-one}, \(W_{\rm toric}\) is contained in the set \(W_1\) parametrizing log canonical log fibers of index one, and \(W_1 \subset W\) is constructible
by Lemmas~\ref{lem:index-constr} and~\ref{lem:log-canonical-constr}.
Over the normalization of each irreducible component of $W_1$, the pair
$(\mathcal{Y}_W,\mathcal{B}_W)$ induces a family of log CY pairs of index one for which every closed fiber $\mathcal{Y}_t$ is Fano.
By Theorem~\ref{thm:toric-CY}, we conclude that the subset 
\( W_{\rm toric} \subseteq W_1 \subseteq W\)
parametrizing toric log CY fibers is constructible,
as it is a constructible subset of a constructible subset. This completes the proof.
\end{proof}

\begin{proof}[Proof of Theorem~\ref{thm:cluster-Fano}]
The proof is identical to the proof of Theorem~\ref{thm:toric-Fano}, using Theorem~\ref{thm:CY-cluster} instead of Theorem~\ref{thm:toric-CY}.
\end{proof}

\begin{proof}[Proof of Corollary~\ref{introcor:ct-smooth-Fano}]
The \(n\)-dimensional smooth Fano varieties form a bounded family by~\cite[Theorem 0.2]{KMM92}; that is, there exist finitely many surjective projective morphisms \(g_i\colon \mathcal Z_i \to S_i\) of quasi-projective varieties such that any \(n\)-dimensional smooth Fano variety is isomorphic to a closed fiber of \(g_i\) for some \(i\) (see, e.g., \cite[Section 2.8]{Birkar21}).
Furthermore, by standard arguments, one may replace each \(S_i\) by a constructible subset to assume every closed fiber is an \(n\)-dimensional smooth Fano variety, i.e., that the \(g_i\) are parametrizing families for \(n\)-dimensional smooth Fano varieties. Then the corollary is immediate from Theorem~\ref{thm:cluster-Fano}.
\end{proof}

We finish this section, by proving that the analogous statement to Corollary~\ref{introcor:ct-smooth-Fano} fails if we replace cluster type with rational.
The following example was kindly suggested to us by Brendan Hassett.

\begin{theorem}[Theorem~\ref{thm:rat-fam}]
    Let \(n\geq 4\) be an integer. Let \(f_i\colon \mathcal X_i \to T_i\) a finite set of smooth projective morphisms whose fibers are smooth rational Fano varieties of dimension \(n\). Then, there exists an \(n\)-dimensional smooth rational Fano variety that is not isomorphic to a fiber of any \(f_i\). That is, there do not exist finitely many families parametrizing \(n\)-dimensional smooth rational Fano varieties.
\end{theorem}

\begin{proof}
    Let $B$ be the open subset of $\pp^{59}$ parametrizing all smooth $(2,2)$ hypersurfaces in $\pp^2\times \pp^3$. Let $U\rightarrow B$ be the universal family, and let \(\pi\colon \mathbb P^{n-4}\times U \to B\) be the composition with the projection to \(U\). By \cite{HassettPirutkaTschinkel}, we know that the set $B_{\rm rat}$ parametrizing rational fibers of \(\pi\) is a countable union of proper closed subset of \(B\), and \(B_{\rm rat}\subset B\) is Zariski (and even Euclidean) dense.
    We argue that there is a closed point $b\in B_{\rm rat}$ for which $\mathbb P^{n-4}\times U_b$ is not isomorphic to a fiber of any $f_i$.

    Let \(\mathcal H_{\mathbb P^{11}}\) be the Hilbert scheme parametrizing closed subschemes of \(\mathbb P^{11}\) with the same Hilbert polynomial as a \((2,2)\) hypersurface in \(\mathbb P^2\times\mathbb P^3\) under the Segre embedding, and let \(\mathcal U_{\mathbb P^{11}} \to \mathcal H_{\mathbb P^{11}}\) be the universal family.
    Let \(\mathcal H_{\mathbb P^2\times\mathbb P^3}\subset \mathcal H_{\mathbb P^{11}}\) be the subset parametrizing such subschemes that are contained in \(\mathbb P^2\times\mathbb P^3\). Then \(\mathcal H_{\mathbb P^2\times\mathbb P^3}\) parametrizes \((2,2)\) hypersurfaces in \(\mathbb P^2\times\mathbb P^3\), and by construction, \(\mathcal H_{\mathbb P^2\times\mathbb P^3}\subset \mathcal H_{\mathbb P^{11}}\) is a locally closed subset.
    The family \(\mathbb P^{n-4}\times\mathcal U_{\mathbb P^{11}} \to \mathcal H_{\mathbb P^{11}}\) induces a morphism \(\mathcal H_{\mathbb P^{11}} \to \mathcal H_{\mathbb P^{12 (n-4) + 11}}\) to the corresponding component of the Hilbert scheme of \(\mathbb P^{12(n-4)+11}\). Let \(\mathcal H_0 \subset \mathcal H_{\mathbb P^{12 (n-4) + 11}}\) denote the image of \(\mathcal H_{\mathbb P^2\times\mathbb P^3}\); this is a constructible subset.
    Let \(\phi_B\colon B\to \mathcal H_{\mathbb P^2\times\mathbb P^3}\) be the induced morphism to the Hilbert scheme. The image \(\phi_B(B_{\rm rat}) \subset \phi_B(B)\) is a proper subset that is Zariski dense. Furthermore, if \(\psi_B\colon B \to \mathcal H_{\mathbb P^2\times\mathbb P^3}\to\mathcal H_0\) is the composition, then \(\psi_B(B_{\rm rat}) \subset \psi_B(B) = \mathcal H_0\) is again a Zariski dense proper subset.

    Now we consider the families \(f_i\colon\mathcal X_i\to T_i\). Since embedding dimension is upper semicontinuous in families, we may replace the \(T_i\) by a (Zariski) locally closed stratification so that for each \(i\), the fibers of \(f_i\) all have embedding dimension \(\leq 12(n-4)+11\) or all have strictly higher embedding dimension. Let \(i\) be an index for which this embedding dimension is \(\leq 12(n-4)+11\). Then we get an induced morphism \(\psi_i\colon T_i \to \mathcal H_{\mathbb P^{12 (n-4) + 11}}\) to the Hilbert scheme, and the image \(\psi_i(T_i)\) is constructible. Then the intersection \(\psi_i(T_i)\cap\mathcal H_0\) is a constructible subset of \(\mathcal H_0\), so it is a finite union of locally closed subsets \(W_{ij}\) of \(\mathcal H_0\).
    Each \(W_{ij}\) must be contained in a proper Zariski closed subset of \(\mathcal H_0\); otherwise, \(W_{ij}\) would contain a nonempty open subset of \(\mathcal H_0\) and hence would intersect the complement of \(\psi_B(B_{\rm rat})\), implying that \(f_i\) has an irrational fiber. Therefore, the intersection \(\psi_i(T_i)\cap\mathcal H_0\) is not dense in \(\mathcal H_0\). We conclude that there exists a closed point \(b\in\psi_B(B_{\rm rat})\) whose image in \(\mathcal H_{\mathbb P^{12 (n-4) + 11}}\) is not contained in \(\bigcup_i \psi_i(T_i)\); that is, no fiber of any \(f_i\) is isomorphic to the rational variety \(\mathbb P^{n-4}\times U_b\).
\end{proof}

\section{Examples and questions}\label{sec:examples-and-questions}

In this section, we collect some examples
of the behavior of toricity and cluster type
in families of Fano varieties.
We propose some questions for further research.
The first two examples show that the property of being a toric variety is neither open nor closed. 

\begin{example}\label{ex:toric-not-closed}
{\em 
Many smooth Fano varieties admit singular toric degenerations.
For instance, the Mukai--Umemura threefold \(V_{22}\) admits a degeneration to a Gorenstein terminal toric Fano $3$-fold (see, e.g.,~\cite[Theorem 1.7]{Gal18}). In this degeneration, all the fibers except the central fiber are non-toric.
}
\end{example}

\begin{example}[\cite{HKW24}]\label{ex:toric-not-open}
{\em
The projective plane $\mathbb{P}^2$ admits non-toric, quasismooth degenerations.
Namely, let $(a,b,c)$ and $(a,b,d)$ be two pairs of Markov triples for which 
$d=3ab-c$, the so-called {\em adjacent} Markov triples.
Consider the weighted hypersurface
\[
V(x_1x_2+x_3^{c}+x_4^{d})\subset \pp(a^2,b^2,d,c) .
\]
By~\cite{HKW24}, this 
is a non-toric, quasismooth, rational Fano $\mathbb{G}_m$-surface of
Picard rank one,
which is a degeneration of $\pp^2$.
Furthermore, by \cite[Theorems 1.2 and 1.3]{HKW24}, all the non-toric,
normal, rational projective
$\mathbb{G}_m$-surfaces which are degenerations of $\pp^2$ arise in this way.
Hence, the property of being a toric variety is not closed in families of Fano varieties.
}
\end{example}

The following example shows that the property of being cluster type
is not open in families of Fano varieties.
We give a family of canonical Fano threefolds such that the special fiber is of cluster type but not toric, and nearby fibers are irrational and hence not of cluster type.

\begin{example}\label{ex:cluster-not-closed}
{\em 
Let \(X \subset \bb P^4\) be the singular cubic threefold defined by
\[ f(x_0,\dots,x_4) \coloneqq (x_0 + x_1 + x_2 + x_3 + x_4)^3 - x_0^3 - x_1^3 - x_2^3 - x_3^3 - x_4^3. \]
Let \(g(x_0,\dots,x_4)\) be the equation of a general cubic, and
consider the family of Fano threefolds
\[
\mathcal{X}\coloneqq\{ ([x_0:\dots:x_4],t) \mid  (1-t)f(x_0,\dots,x_4)+tg(x_0,\dots,x_4)=0\}
\subset \pp^4 \times \mathbb{A}^1
\] 
over \(\mathbb{A}^1\).
The general fiber of $\mathcal{X} \to \mathbb{A}^1$ is a smooth cubic threefold, so it is irrational.

Now we show that the special fiber $\mathcal{X}_0 = X$ is of cluster type but not toric. Since \(X\) is isomorphic to the Segre cubic
\[
\{ [x_0:\dots:x_5] \mid x_0+\dots+x_5 = x_0^3+\dots+x_5^3=0\}\subset \mathbb{P}^5,
\]
it has $10$ nodal singularities and
its automorphism group is $S_6$; in particular, \(X\) is not a toric variety.
It remains to construct a boundary \(B\) on \(X\) such that \((X,B)\) is a log CY pair of cluster type.
There is a small resolution $\pi\colon Y\rightarrow X$
which is isomorphic to the blow-up $\phi\colon Y \to \pp^3$
at $5$ general points
$\{p,q,r,s,t\} \subset \bb P^3$~\cite{Kapranov93}.
Let $H_1$ (resp. $H_2$) by the hyperplane in $\pp^3$
spanned by $\{p,q,r\}$ (resp. $\{p,s,t\}$).
Since \(\{p,q,r,s,t\}\) are in general position, the line $\ell_1 \coloneqq H_1 \cap H_2$ only contains one point of the set $\{p,q,r,s,t\}$, namely $p$. 
Next, let $u_1$ and $u_2$ be two general points in $\ell_1$,
and let $H_3$ (resp. $H_4$) be the hyperplane in $\pp^3$ spanned by $\{u_1,q,s\}$
(resp. $\{u_2,r,t\}$).
By construction, the pair
$(\pp^3,H_1+H_2+H_3+H_4)$ is log Calabi--Yau,
and each point of the set $\{p,q,r,s,t\}$ is contained in precisely two of the hyperplanes
$\{H_1,H_2,H_3,H_4\}$. 
In particular, if we write 
\[
\phi^*(K_{\pp^3}+H_1+H_2+H_3+H_4)=K_Y+B_Y,
\]
then the divisor $B_Y$ is effective. 
Then $(X,B \coloneqq \pi_* B_Y)$ is a log CY pair,
and the exceptional set of 
the crepant birational map
\[
\pi\circ \phi^{-1} \colon (\pp^3,H_1+H_2+H_3+H_4) \dashrightarrow (X,B)
\]
is precisely the $10$ lines passing through pairs of points in the set $\{p,q,r,s,t\}$.
In particular, the intersection $\mathbb{G}_m^3\cap {\rm Ex}(\pi\circ \phi^{-1})$ contains no divisors, 
so the pair $(X,B)$ is of cluster type.
This shows that the Fano variety $X$ is of cluster type.
}
\end{example}

The following example shows that the property of being cluster type is not closed in families of Gorenstein canonical Fano surfaces. 

\begin{example}\label{ex:cluster-not-open}
{\em  
Let $X_0$ be the Gorenstein del Pezzo surface of Picard rank one with a single $D_5$ singularity.
By~\cite[Theorem 1.1 and Example 8.3]{HP10}, we know that $X_0$ admits a $\qq$-Gorenstein smoothing $\mathcal{X}\rightarrow \mathbb{A}^1$.
Since the anticanonical volume of $X_0$ is $4$ (see, e.g.,~\cite[Lemma 3]{MZ88}),
the general fiber of $\mathcal{X}\rightarrow \mathbb{A}^1$ is a smooth del Pezzo surface of degree $4$.
Every smooth del Pezzo surface of degree $4$
admits a $1$-complement of coregularity zero~\cite[Theorem 2.1]{ALP23}
and so it is of cluster type by~\cite[Lemma 1.13]{GHK15}. 
On the other hand, the central fiber $\mathcal{X}_0$ is not of cluster type, 
as cluster type surfaces only admit $A$-type singularities (see~\cite[Proposition 5.1]{EFM24}).
}
\end{example}

The following example shows that the property of being a (possibly non-normal) toric variety is not locally closed in families of slc varieties.

\begin{example}\label{ex:toric-not-locally-closed}
{\em 
Consider the trivial family $\pi_2\colon \mathcal{Y}\coloneqq\pp^2\times \mathbb{A}^1_t\rightarrow \mathbb{A}^1_t$.
Now, let $\mathcal{D}$ be a divisor on $\mathcal{Y}$ for which $\mathcal{D}_0$ is the reduced sum of the coordinate hyperplanes
and $\mathcal{D}_t$ is a smooth cubic curve for every $t\neq 0$. 
Let $\mathcal{X}$ be the deformation of $\mathcal{Y}$ to the normal cone of $\mathcal{D}$.
For each $t\in \mathbb{A}^1$, we let $\psi_t \colon \mathcal{X}_t \rightarrow \mathbb{A}^1_s$
be the induced deformation of $\mathcal{Y}_t$ to the normal cone over $\mathcal{D}_t$.
Then, the threefold $\mathcal{X}$ admits a morphism
$\psi\colon \mathcal{X}\rightarrow \mathbb{A}^2$ whose fiber over $(s,t)$ is $\psi_t^{-1}(s)$.
By construction, all the fibers of $\psi$ over $\mathbb{G}_m\times \mathbb{A}^1_t$ are isomorphic to $\pp^2$ and hence toric varieties.
The fibers over $\{0\}\times \mathbb{G}_m$ are cones over smooth cubic curves and so they are not toric. 
The fiber over $(0,0)$ is the non-normal cone over the reduced sum of the coordinate hyperplanes of $\pp^2$, which is a toric variety.
We conclude that the subset of $\mathbb{A}^2$
parametrizing toric fibers of $\psi\colon \mathcal{X}\rightarrow \mathbb{A}^2$ is
\[
(\mathbb{G}_m\times \mathbb{A}^1) \cup (0,0)
\]
which is not a locally closed subset. However, it is a constructible subset.
}
\end{example}

The previous examples motivate the following question.

\begin{question}
    Is log rationality a constructible condition in families of Fano varieties?
\end{question}

Rationality specializes in smooth families~\cite{KontsevichTschinkel}. 
Thus, the following question seems to be reasonable
to study log rationality in families.

\begin{question}
 Let $(\mathcal{X},\mathcal{B})\rightarrow C$ be a log smooth family for which the generic fiber is log rational. Are closed fibers log rational?
\end{question}

The previous question is closely related to the log rationality conjecture proposed
by Ducat and Enwright, Figueroa, and the second author (see~\cite[Conjecture 1.4]{Duc22} and~\cite[Conjecture 1.1]{EFM24}).

Finally, our proof of Theorem~\ref{thm:cluster-Fano} requires that the fibers of the family  are \(\bb Q\)-factorial and terminal (see Remark~\ref{rem:cluster-type-singularities-assumptions}). However, we are not aware of any counterexamples to Theorem~\ref{thm:cluster-Fano} when these assumptions on the singularities are removed.
\begin{question}
    Is the property of being cluster type a constructible condition in families of Fano varieties with worse than terminal \(\bb Q\)-factorial singularities?
\end{question}

\bibliographystyle{habbvr}
\bibliography{references}

\end{document}